\newcommand{\R}{\mathbbm{R}}
\newcommand{\Z}{\mathbbm{Z}}
\newcommand{\Exp}{\mathbb{E}} 
\newcommand{\T}{\mathcal{T}}  
\newcommand{\snorm}[1]{\left| #1 \right|_{\mathcal{E}}}
\newcommand{\mcY}{\mathcal{Y}}
\newcommand{\mcX}{\mathcal{X}}
\newcommand{\C}{\mathcal{C}}
\newcommand{\norm}[1]{\left\| #1 \right\|}
\newcommand{\lsup}[1]{\underset{#1\to\infty}{\overline{\lim}}}
\newcommand{\linf}[1]{\underset{#1\to\infty}{\underline{\lim}}}
\newcommand{\modd}{\text{ mod }}
\newtheorem{theorem}{Theorem}
\newtheorem{corollary}[theorem]{Corollary}
\newtheorem{lemma}[theorem]{Lemma}
\newtheorem{assumption}{Assumption}
\newenvironment{proof}{\paragraph{Proof:}}{\hfill$\square$}
\begin{document}
\title{Large Deviations of a Network of Neurons with Dynamic Sparse Random Connections}
\markboth{MacLaurin}{Large Deviations of a Network of Neurons with Dynamic Sparse Random Connections}

\author{James  MacLaurin}

\maketitle
\footnote{Department of Mathematical Sciences, New Jersey Institute of Technology, MLK Boulevard Newark USA\\
james.n.maclaurin@njit.edu}


 \begin{abstract}
In this work we determine the limiting dynamics of a system of interacting particles indexed by a lattice $\Z^d$. The connections are random, sparse and unscaled, so that the system converges in the large size limit due to the probability of a connection between any two particles decreasing as the system size increases. The particles are also subject to noise (such as independent Brownian Motions). The method of proof is to assume a process-level (or Level 3) LDP for the double-layer empirical measure for the noise and connections, and then apply a series of transformations to this to obtain an LDP for the process-level empirical measure of our system. Although it is not explicitly necessary, we expect that most applications of this work should involve an assumption of stationarity of the probability law for the noise and connections under translations of the lattice, so that the system converges to an ergodic probability law in the large size limit. This work synthesizes the theory of large-size limits of interacting particles with that of random graphs and matrices. It should therefore be relevant to neuroscience and social networks theory in particular.
\end{abstract}
 
\section{Introduction}
In this paper we determine a Large Deviation Principle for an asymptotically large system of interacting processes on a lattice with sparse random connections. We are motivated in particular by the study of interacting neurons in neuroscience \cite{bressloff:12}, but this work ought to be adaptable to other phenomena such as mathematical finance, social networks, population genetics or insect swarms \cite{robins2007introduction,atay2016perspectives}. 

Classical mean-field models are perhaps the most common method used to scale up from the level of individual particles to the level of populations \cite{bossy-talay:97,baladron-fasoli-etal:12b}. For a group of neurons indexed from $1$ to $N$, the evolution equation of a mean field model is typically of the following form (an $\R^N$-valued SDE)
\begin{equation}
dX^j_t = \left[g(X^j_t) + \frac{1}{N}\sum_{k=1}^N h_t(X^j,X^k)\right]dt + \sigma(X^j_t)dW^j_t.\label{eq:meanfieldbasic}
\end{equation}
We set $X^j_0= 0$. Here $g$ is Lipschitz, $h$ is Lipschitz and bounded, and $\sigma$ is Lipschitz. $(W^j)_{j\in\mathbb{N}}$ are independent Brownian Motions representing internal / external noise. Asymptoting the number of particles $N$ to $\infty$, we find that in the limit $X^j$ is independent of $X^k$ (for $j\neq k$), and each $X^j$ is governed by the same law \cite{sznitman:91}. Since the $(X^j)_{j\in\mathbb{N}}$ become more and more independent, it is meaningful to talk of their mean as being representative of the group as a whole. In reaching this limit, three crucial assumptions have been made: that the external noise is uncorrelated, that the connections between the particles are homogeneous and that the connections are scaled by the inverse of the size of the system. We will relax each of these assumptions in our model (which is outlined in Section \ref{sect:NeuronModel}).

The major difference between the model in Section \ref{sect:NeuronModel} and the mean field model outlined above is the model of the connections. In the mean field model, the network is completely connected, with a uniform connection strength of $N^{-1}$. In our work, the connections are sparse and random, and typically (although not necessarily) sampled from a spatially-stationary probability law (i.e. a law that is invariant under translations of the lattice). The system does not blowup as $N\to \infty$ because the probability of a connection between particles decreases as the lattice distance increases. One reason that the mean-field model in \eqref{eq:meanfieldbasic} converges as $N\to\infty$ is because the probability law is invariant under any permutation of the indices. By contrast, the system of this paper is only invariant under a translation of the lattice (with periodic boundary conditions), a much weaker assumption that can accommodate a richer diversity of emergent phenomena.

Our network has the structure of a sparse random graph with edges which are directed and weighted. In neuroscience and social network theory in particular, there is an enormous literature devoted to the study of complex networks \cite{sporns2011networks,robins2007introduction}.  In the mathematics literature, the papers \cite{delattre2016hawkes,delattre2016note,coppini2020law,bayraktar2020graphon} determine the limiting behavior for dynamics on Erdos-Renyi random graphs where the typical number of afferent edges on each node diverges with the system size. In the large $N$ limit, the behavior of such networks becomes analogous to the mean-field equations in \eqref{eq:meanfieldbasic}. By contrast, in the model of this paper, as the network grows, the typical number of afferent edges per neuron does not diverge with $N$. In the model of this paper, the connection strength between any two vertices is random, such that the probability of a strong connection decreases as the lattice distance increases. Furthermore the connections are correlated, with the correlation between two connections dependent on the lattice distance between the heads and tails.  We are able to obtain an understanding of the limiting behavior because the joint law of the dynamics and edges is invariant under a translation of the lattice. This means that instead of working with the standard empirical measure, we must work with the `Level 3' empirical process that understands correlations between different particles \cite{ellis:85}.

The main result of this paper is a Large Deviation Principle (LDP) for the interacting particle model in Section \ref{sect:NeuronModel}. A Large Deviation Principle is a very useful mathematical technique that allows us to estimate finite-size deviations of the system from its limit behaviour. There has been much effort in recent years to understand such finite-size phenomena in mathematical models of neural networks - see for instance \cite{bressloff:09,buice-cowan-etal:10,fasoli2015complexity}. More generally, there exists a well-developed literature on the Large Deviations and other asymptotics of weakly-interacting particle systems (see for example \cite{ben-arous-guionnet:95,goldys:01,dawson-del-moral:05,budhiraja-dupuis-etal:12,delarue2015particle,faugeras-maclaurin:16,bossy2015clarification}). These are systems of $N$ particles, each evolving stochastically, and usually only interacting via the empirical measure. 

This paper is structured as follows. In Section \ref{sect:stochprocess} we outline a general model of interacting particles on a lattice with random connections, and state a large deviation principle under a set of assumptions.  In Section \ref{Section Application} we outline an extended example of this theory which satisfies the assumptions of Section \ref{sect:stochprocess}. This example considers a sparse network of Fitzhugh-Nagumo neurons, with Hebbian learning on the connections, and the probability of a connection between two neurons being given by a Gibbs distribution. The remaining sections are dedicated to the proof of the main result in Theorem \ref{Theorem Main LDP}. The proof essential demonstrates that the empirical process can be approximated arbitrarily well by a continuous filtering of the empirical process containing the connections and noise.

\textit{Notation:}

If $\mathcal{X}$ is some separable topological space, then we denote the $\sigma$-algebra generated by the open sets by $\mathcal{B}(\mathcal{X})$, and the set of all probability measures on $(\mathcal{X},\mathcal{B}(\mathcal{X}))$ by $\mathcal{P}(\mathcal{X})$. We always endow $\mathcal{P}(\mathcal{X})$ with the topology of weak convergence. 

Elements of the processes in this paper are indexed by the lattice points $\Z^d$: for $j\in\Z^d$ we write $j = (j(1),\ldots,j(d))$. Let the particles be indexed by the set $V_n \subset \Z^d$, which is such that $j\in V_n$ if $|j(m)| \leq n $ for all $1\leq m \leq d$. The number of elements in $V_n$ is written as $|V_n| := (2n+1)^d$.

Let $| X | $ denote the Euclidean norm on $\mathbb{R}^m$. For any $s\in [0,T]$, we endow $\mathcal{C}([0,s],\R^m)$ with the norm $\norm{U}_s := \sup_{r\in [0,s]}|U_r|$, and we write $\mathcal{T} := \mathcal{C}\big([0,T],\R^m\big)$. In the model outlined in the next section, the activity over time of each particle is a $\mathcal{T}$-valued random variable. The state space for the connections between the particles is taken to be a complete separable metric space $\mathcal{E}$, with metric $d_{\mathcal{E}}(\cdot,\cdot)$. Let $\pi^{V_m}: \T^{\Z^d}\to \T^{V_m}$ be the projection $\pi^{V_m}(X) := (X^j)_{j\in V_m}$. We endow $\T^{\Z^d}$ with the cylindrical topology (generated by sets $O\subset \T^{\Z^d}$ such that $\pi^{V_m}O$ is open in $\T^{V_m}$).  Let $d_m^{\mathcal{P}}$ be the Levy-Prokhorov metric on $\mathcal{P}\big(\T^{V_m}\big)$ generated by the norm $\norm{x}_{T,m} := \sum_{j\in V_m}\norm{x^j}_{T}$ on $\T^{V_m}$. The following metric on $\mathcal{P}\big(\T^{\Z^d}\big)$ metrizes weak convergence,
\begin{equation}\label{defn prohorov metric T}
d^{\mathcal{P}}(\mu,\nu) := \sum_{j=1}^\infty \min \bigg(2^{-j},d^{\mathcal{P}}_j\big(\pi^{\mathcal{P}}_{V_j}\mu,\pi^{\mathcal{P}}_{V_j}\nu\big) \bigg),
\end{equation}
where $\pi^{\mathcal{P}}_{V_j}$ is the projection onto the marginal in $\mathcal{P}\big(\T^{V_j} \big)$. Thanks to the Komolgorov Extension Theorem, $d^{\mathcal{P}}$ is complete (i.e. each Cauchy Sequence converges to a unique limit), since each $d_j^{\mathcal{P}}$ is complete over $\mathcal{P}\big(\T^{V_j}\big)$ \cite{billingsley:99}. 

It may be noted that many papers on the convergence of networks of interacting particles use the Wasserstein Metric rather than the above Levy Prokhorov metric. However the implementation of the Wasserstein metric is complicated by the fact that the map $\Psi^m$ (which maps the noise and connections to the solution and is defined in \eqref{eq: first Psi m definition}) is not continuous with respect to the cylindrical topology. This is why we use the Levy-Prokhorov metric in this paper.
\section{Outline of Model and Preliminary Definitions}\label{sect:stochprocess}
In this section we outline our finite model of $(2n+1)^d$ stationary interacting particles indexed over the cube $V_n$. In Subsection \ref{Section Assumptions} we outline our assumptions on the model. The main result of this paper is stated in Theorem \ref{Theorem Main LDP}. 

\subsection{Outline of Model and Main Result}\label{sect:NeuronModel}

For $n\in\Z^+$, there are $|V_n|$ particles in our network. The joint probability law (of both the dynamics and the connections) is assumed to be invariant under a translation of the lattice (assuming periodic boundary conditions).  There are three components to the dynamics of our network: the internal dynamics term $\mathfrak{b}$, the interaction term $\Lambda^k_s(J^{n,j,k},U^j,U^{(j+k)\modd V_n})$ and the noise term $W^{n,j}_t$. The form of our interaction term differs from standard mean-field models in that it is not scaled by some function of the network size $|V_n|$, and it is not homogeneous throughout the network. It depends also on the random connection $J^{n,j,k}$ between the particles at nodes $j$ and $(j+k)\modd V_n$ (here $k \in V_n$ is the vector distance between the two particles relative to the toroidal topology). $(J^{n,j,k})_{j,k\in V_n}$ are correlated random variables taking values in $\mathcal{E} := \lbrace 0,1\rbrace$. 

We assume that $W^{n} := \big(W^{n,j}_{t}\big)_{j\in V_n,t\in [0,T]}$ is a $\mathcal{T}^{V_n}$-valued random variable (such as, but not necessarily, independent Brownian motions). The system we study in this paper is governed by the following evolution equation: for $j\in V_n$ and $t\in [0,T]$,
\begin{equation}\label{eq:fundamentalmult}
U^{j}_t = \int_0^t \bigg(\mathfrak{b}(U^{j}_s) + \sum_{k\in V_n}\Lambda^{k}_s\big(J^{n,j,k},U^j,U^{(j+k)\modd V_n}\big)\bigg)ds + W^{n,j}_{t}.
\end{equation}

Here $(j+k)\modd V_n := l\in V_n$, such that $(j(p)+k(p)) \modd (2n+1) = l(p)$ for all $1\leq p\leq d$. Thus one may think of the particles as existing on a $d$-dimensional torus. It is noted in Lemma \ref{Lemma Solution Equivalence} that there exists a unique solution to \eqref{eq:fundamentalmult} for every $W^n \in \T^{V_n}$. The thrust of this article is to understand the asymptotic behaviour of the network as $n\to \infty$.

Because the probability of a connection decreases with lattice distance, the network is very sparse. This means that the standard empirical measure in $\mathcal{P}\big( \mathcal{C}([0,T],\mathbb{R}^m)\big)$ is not the appropriate object to capture the emergent behavior.  Rather, because the system is invariant under a translation of the lattice, the process-level empirical measure $\hat{\mu}^n$ is a more appropriate limiting object. This is a reduced macroscopic variable commonly used in statistical mechanics  \cite{ellis:85}. Let $S^k:\T^{\Z^d} \to \T^{\Z^d}$ (for some $k\in\Z^d$) be the shift operator (i.e. $(S^k x)^m := x^{m+k}$). Let $\mathcal{P}_S(\T^{\Z^d})$ be the set of all stationary probability measures, i.e. such that for all $k\in \Z^d$,
\[
\mu \circ (S^k)^{-1} = \mu.
\]
Denote the empirical measure $\hat{\mu}^n: \T^{V_n} \to \mathcal{P}_S(\T^{\Z^d})$ by
\begin{equation}
\label{defn empirical measure}
\hat{\mu}^n(X) := \frac{1}{|V_n|}\sum_{j\in V_n}\delta_{S^j \tilde{X}},
\end{equation}
where $\tilde{X}\in\T^{\Z^d}$ is the $V_n$-periodic interpolant, i.e. $\tilde{X}^j := \tilde{X}^{j \modd V_n}$. If $X \in \T^{\Z^d}$, then in a slight abuse of notation we write $\hat{\mu}^n(X) := \hat{\mu}^n(\pi^{V_n}X)$. 

We now outline the main result of this paper.
\begin{theorem}\label{Theorem Main LDP}
Let the law of $\hat{\mu}^n(U)$ be $\Pi^n \in \mathcal{P}(\mathcal{P}_S(\T^{\Z^d}))$. Under the assumptions outlined in Section \ref{Section Assumptions}, $(\Pi^n)_{n\in\Z^+}$ satisfy a Large Deviation Principle with good rate function $I$ (i.e. $I$ has compact level sets). That is, for all closed subsets $A$ of $\mathcal{P}_S\big(\T^{\Z^d}\big)$,
\begin{equation}\label{LDP closed}
\lsup{n}\frac{1}{|V_n|}\log\Pi^n(A) \leq - \inf_{\gamma\in A}I(\gamma).
\end{equation}
For all open subsets $O$ of $\mathcal{P}_S\big(\T^{\Z^d}\big)$,
\begin{equation}
\linf{n}\frac{1}{|V_n|}\log\Pi^n(O) \geq - \inf_{\gamma\in O}I(\gamma).\label{LDP open}
\end{equation}
The rate function is
\[
I(\mu) := \inf_{\nu \in \mathcal{P}_{\bar{S}}(\bar{\T}^{\Z^d})}\left\lbrace I_{\mathcal{Y}}(\nu): \grave{\Psi}(\nu) = \mu\right\rbrace,
\]
where $\mathcal{P}_{\bar{S}}(\bar{\T}^{\Z^d})$ is defined in \eqref{eq bar S stationary}, $\grave{\Psi}: \mathcal{P}_{\bar{S}}(\bar{\T}^{\Z^d}) \to \mathcal{P}_{\bar{S}}(\bar{\T}^{\Z^d})$ is defined in \eqref{eq:fundamentalmult 1},\eqref{eq: grave Psi 1},\eqref{eq: grave Psi 2} and $I_{\mathcal{Y}}$ is defined in Assumption \ref{Assumption LDP}.
\end{theorem}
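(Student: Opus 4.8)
The plan is to transport the process--level LDP that Assumption~\ref{Assumption LDP} grants for the stationary double--layer empirical measure of the noise and connections through the solution functional of \eqref{eq:fundamentalmult}. Write $\hat{\nu}^n$ for that empirical measure, built from $\big(W^{n,j},(J^{n,j,k})_{k\in V_n}\big)_{j\in V_n}$ after periodic interpolation; by Lemma~\ref{Lemma Solution Equivalence} equation \eqref{eq:fundamentalmult} has a unique solution for every $W^n$, and by the shift--equivariance of \eqref{eq:fundamentalmult} together with the construction of $\grave{\Psi}$ in \eqref{eq:fundamentalmult 1}--\eqref{eq: grave Psi 2}, one has $\hat{\mu}^n(U)=\grave{\Psi}\big(\hat{\nu}^n\big)$, up at most to a correction that is exponentially negligible on the scale $|V_n|$. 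Were $\grave{\Psi}$ continuous on $\mathcal{P}_{\bar{S}}(\bar{\T}^{\Z^d})$, Theorem~\ref{Theorem Main LDP} would follow at once from the contraction principle; but, as noted after \eqref{eq: first Psi m definition}, the pathwise solution map is not continuous in the cylindrical topology (on the infinite lattice a node's state depends on arbitrarily distant noise), so the argument must go through exponentially good approximations.

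First I would introduce, for each $m\in\Z^+$, a finite--range approximate dynamics $U^{[m]}$, obtained from \eqref{eq:fundamentalmult} by retaining only those interactions confined to a cube of side of order $m$ and freezing the influence of the remaining nodes (following the complete--connectivity bookkeeping of Lemma~\ref{Lemma: Bound XZ 1}). The associated functional $\grave{\Psi}^{[m]}$ on $\mathcal{P}_{\bar{S}}(\bar{\T}^{\Z^d})$ depends on only finitely many coordinates of its argument and is continuous, so the contraction principle applied to the LDP for $\hat{\nu}^n$ gives, for each fixed $m$, an LDP for $\hat{\mu}^n\big(U^{[m]}\big)=\grave{\Psi}^{[m]}\big(\hat{\nu}^n\big)$ with good rate function
\[
I^{[m]}(\mu)\;:=\;\inf\Big\{\,I_{\mathcal{Y}}(\nu)\;:\;\nu\in\mathcal{P}_{\bar{S}}(\bar{\T}^{\Z^d}),\ \grave{\Psi}^{[m]}(\nu)=\mu\,\Big\}.
\]

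The crux, and the step I expect to be the main obstacle, is to verify that $\big\{\hat{\mu}^n(U^{[m]})\big\}_m$ is an exponentially good approximation of $\hat{\mu}^n(U)$, namely that for every $\delta>0$
\[
\lim_{m\to\infty}\ \limsup_{n\to\infty}\ \frac{1}{|V_n|}\log\Prob\!\Big(d^{\mathcal{P}}\big(\hat{\mu}^n(U^{[m]}),\hat{\mu}^n(U)\big)>\delta\Big)\;=\;-\infty.
\]
A Gronwall estimate, using the Lipschitz and boundedness hypotheses on $\mathfrak{b}$ and $\Lambda$ and the null--connection property of $\mathfrak{N}$ from Assumption~\ref{Assumption Noise LDP}, bounds $\norm{U^{j}-U^{[m],j}}_T$ for each $j$ by the accumulated strength of the connections $J^{n,j,k}$ with $k$ outside the cube; the empirical fraction of nodes for which this exceeds $\delta$ is then shown to have probability decaying faster than any exponential in $|V_n|$, using the decay of the connection law with lattice distance together with the exponential bounds implicit in Assumption~\ref{Assumption LDP}. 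Lemma~\ref{Lemma: Bound XZ 1} supplies the combinatorial core of this estimate.

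Granted the approximation bound, the standard theorem on large deviations under exponentially good approximations (Dembo and Zeitouni) yields a weak LDP for $(\Pi^n)$ with rate function $I(\mu)=\sup_{\delta>0}\liminf_{m\to\infty}\inf_{\{\mu':\,d^{\mathcal{P}}(\mu,\mu')<\delta\}}I^{[m]}(\mu')$. It remains to identify this with $\inf\{I_{\mathcal{Y}}(\nu):\grave{\Psi}(\nu)=\mu\}$, to upgrade to the full LDP, and to check that $I$ is good. The deterministic Gronwall bound also gives $\grave{\Psi}^{[m]}\to\grave{\Psi}$, uniformly on the level sets $\{I_{\mathcal{Y}}\le\alpha\}$, which are compact since $I_{\mathcal{Y}}$ is a good rate function; using $\mu'=\grave{\Psi}^{[m]}(\nu)$ as a competitor gives the upper bound $I(\mu)\le\inf\{I_{\mathcal{Y}}(\nu):\grave{\Psi}(\nu)=\mu\}$, while for the reverse inequality one takes near--minimizers $\nu^{[m]}$ realizing the nested infima, notes that they lie in a common compact level set of $I_{\mathcal{Y}}$, extracts a convergent subsequence $\nu^{[m]}\to\nu$, and passes to the limit using the uniform convergence together with the lower semicontinuity of $I_{\mathcal{Y}}$ to conclude $\grave{\Psi}(\nu)=\mu$ and $I_{\mathcal{Y}}(\nu)\le I(\mu)$. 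The same compactness shows $\{I\le\alpha\}=\grave{\Psi}(\{I_{\mathcal{Y}}\le\alpha\})$, a compact set, so $I$ is good; exponential tightness of $(\Pi^n)$, which promotes the weak LDP to the full LDP \eqref{LDP closed}--\eqref{LDP open}, follows from the exponential tightness of the law of $\hat{\nu}^n$ contained in Assumption~\ref{Assumption LDP}.
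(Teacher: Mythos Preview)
Your overall strategy---truncate the interaction to a finite range, push the LDP for the double-layer empirical measure through via the extended contraction principle of Dembo--Zeitouni, and identify the rate function by uniform convergence of the approximants on level sets of $I_{\mathcal{Y}}$---is exactly the paper's. There is, however, a genuine gap at a load-bearing step.

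You assert that the truncated functional $\grave{\Psi}^{[m]}$ ``depends on only finitely many coordinates of its argument and is continuous''. This is not correct. Even after cutting the interaction to $k\in V_m$, the solution $\Psi^m(\mathcal{Z})^0$ at node $0$ depends on $\Psi^m(\mathcal{Z})^k$ for $k\in V_m$, which in turn depends on $\Psi^m(\mathcal{Z})^{k+k'}$ for $k'\in V_m$, and so on: through the time evolution the dependence cascades to \emph{every} coordinate of $\mathcal{Z}$. Thus $\Psi^m$ is not a function of finitely many coordinates, the induced map $\mu\mapsto\mu\circ(\Psi^m)^{-1}$ is not even well-defined on all of $\mathcal{P}_{\bar{S}}(\bar{\T}^{\Z^d})$, and there is no direct continuity in the cylindrical topology to invoke. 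The Gronwall bound you cite (Lemma~\ref{Lemma: Bound XZ 1}) does not by itself give continuity: it controls $\sum_{j}\lambda_m^j\norm{X^j-Z^j}_T$ in terms of $\lambda_m$-weighted sums over \emph{all} of $\Z^d$, which only converge under moment conditions on $\mu$.

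The paper supplies precisely this missing ingredient. It introduces closed sets $\mathcal{A}_c\subset\mathcal{P}_{\bar{S}}(\bar{\T}^{\Z^d})$ (defined via moment bounds on $\norm{X^0}_T^2$ and on the tails $\sum_{k\notin V_m}\snorm{\omega^{0,k}}$), proves that $\tilde{\Psi}^m:=(\cdot)\circ(\Psi^m)^{-1}$ is well-defined and uniformly continuous on each $\mathcal{A}_c$ (Lemma~\ref{Lem Prokhorov Technical One}, using the $\lambda_m$-weighted estimates of Section~\ref{Section Psi m}), and then uses the Dugundji extension theorem to produce a \emph{globally} continuous $\grave{\Psi}^m$ agreeing with $\tilde{\Psi}^m$ on $\mathcal{A}_m$. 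The exponentially-good-approximation bound and the uniform convergence on level sets are then both reduced to showing that $\hat{\mu}^n(\mathcal{Y}^n)\in\mathcal{A}_c$ with overwhelming probability (Lemma~\ref{Lem Bound hat mu in Ac}, via Assumptions~\ref{Assumption Connection Growth} and~\ref{Assumption Noise LDP}) and that each level set $\{I_{\mathcal{Y}}\le\alpha\}$ sits inside some $\mathcal{A}_c$ (Lemma~\ref{Lem alpha c relation}). Your sketch would need this entire layer to go through.

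Two minor corrections: the exponential tail bounds you invoke for the approximation step come from Assumptions~\ref{Assumption Connection Growth} and~\ref{Assumption Noise LDP}, not from Assumption~\ref{Assumption LDP}; and the null connection $\mathfrak{N}$ is introduced in Assumption~\ref{Assumption NonUniform}, not Assumption~\ref{Assumption Noise LDP}.
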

From now on, if a sequence of probability laws satisfies \eqref{LDP closed}  and \eqref{LDP open} for some $I$ with compact level sets, then to economise space we say that it satisfies an LDP with a good rate function $I$. Theorem \ref{Theorem Main LDP} is useful because it allows us to understand the asymptotic behaviour of averages over the entire network, as is noted in the following corollary.
\begin{corollary}\label{Cor: Convergence Averages}
Suppose that there is a unique $\mu_{\mathcal{Y}} \in \mathcal{P}_{\bar{S}}\big(\bar{T}^{\Z^d} \big)$ such that $I_{\mathcal{Y}}\big(\mu_{\mathcal{Y}}\big) = 0$ (see Assumption \ref{Assumption LDP} for the definitions of these terms). For some positive integer $q$, let $g:\T^{V_q} \to \R$ be continuous and bounded. Then, writing $H_n = \frac{1}{|V_n|}\sum_{j\in V_n}g\big(S^j \cdot\tilde{U}\big)$,
\[
\lim_{n\to \infty}H_n = \Exp^{\grave{\Psi}(\mu_{\mathcal{Y}})}\left[ g\right],
\]
$\mathbb{P}$-almost-surely.
\end{corollary}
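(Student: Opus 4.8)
\emph{Proof sketch.} The plan is to express $H_n$ as a continuous functional of the process-level empirical measure $\hat{\mu}^n(U)$, to transport the LDP of Theorem \ref{Theorem Main LDP} through this functional by the contraction principle, and then to read off the stated limit from the fact that the contracted rate function has a unique zero.

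First I would note that $H_n$ is a function of $\hat{\mu}^n(U)$ alone. Let $\bar{g}:\T^{\Z^d}\to\R$ be the cylinder function $\bar{g}(X):=g(\pi^{V_q}X)$; since $g$ is bounded and continuous and $\bar{g}$ depends only on the finitely many coordinates indexed by $V_q$, $\bar{g}$ is bounded and continuous for the cylindrical topology. Then $H_n=\frac{1}{|V_n|}\sum_{j\in V_n}\bar{g}(S^j\tilde{U})=\ip{\hat{\mu}^n(U)}{\bar{g}}$, so that $\Pi^n_g=\Pi^n\circ\Phi^{-1}$, where $\Phi:\mathcal{P}_S(\T^{\Z^d})\to\R$, $\Phi(\nu):=\ip{\nu}{\bar{g}}=\Exp^{\nu}[\bar{g}]$, is continuous for the topology of weak convergence (it is a bounded continuous test function paired against $\nu$). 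Since $(\Pi^n)_{n\in\Z^+}$ satisfies an LDP with the good rate function $I$ by Theorem \ref{Theorem Main LDP}, the contraction principle gives immediately that $(\Pi^n_g)_{n\in\Z^+}$ satisfies an LDP on $\R$ with the good rate function
\[
I_g(y):=\inf\big\lbrace I(\mu):\mu\in\mathcal{P}_S(\T^{\Z^d}),\ \Phi(\mu)=y\big\rbrace,
\]
with the convention $\inf\emptyset=+\infty$. This already establishes the LDP part of the corollary.

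Next I would identify the zero set of $I_g$. Set $y_0:=\Exp^{\grave{\Psi}(\mu_{\mathcal{Y}})}[g]=\Phi(\grave{\Psi}(\mu_{\mathcal{Y}}))$. The variational formula for $I$ in Theorem \ref{Theorem Main LDP} gives $I(\grave{\Psi}(\mu_{\mathcal{Y}}))\le I_{\mathcal{Y}}(\mu_{\mathcal{Y}})=0$, hence $I_g(y_0)=0$. Conversely, if $I_g(y)=0$ there is $\mu$ with $\Phi(\mu)=y$ and $I(\mu)=0$, and then by the variational formula a sequence $\nu_k$ with $\grave{\Psi}(\nu_k)=\mu$ and $I_{\mathcal{Y}}(\nu_k)\to 0$. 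For large $k$ the $\nu_k$ lie in the compact level set $\{I_{\mathcal{Y}}\le 1\}$ (goodness of $I_{\mathcal{Y}}$, Assumption \ref{Assumption LDP}), so along a subsequence $\nu_k\to\nu_\ast$ with $I_{\mathcal{Y}}(\nu_\ast)=0$ by lower semicontinuity; the hypothesis forces $\nu_\ast=\mu_{\mathcal{Y}}$, and the regularity of $\grave{\Psi}$ used in the proof of Theorem \ref{Theorem Main LDP} (closedness of its graph, equivalently closedness of the constraint set in the definition of $I$) gives $\mu=\grave{\Psi}(\mu_{\mathcal{Y}})$ and so $y=y_0$. Hence $y_0$ is the unique zero of the good rate function $I_g$.

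Finally I would deduce the convergence by a standard Borel--Cantelli argument. Fix $\varepsilon>0$ and put $B_\varepsilon:=\{y\in\R:|y-y_0|\ge\varepsilon\}$, a closed set not containing $y_0$; since $I_g$ is good with unique zero $y_0$, the number $c_\varepsilon:=\inf_{y\in B_\varepsilon}I_g(y)$ is strictly positive (if finite it is attained at a point of $B_\varepsilon$, which cannot be $y_0$). Applying the upper bound \eqref{LDP closed} for $(\Pi^n_g)$ to $B_\varepsilon$ gives $\limsup_n|V_n|^{-1}\log\PP(|H_n-y_0|\ge\varepsilon)\le -c_\varepsilon$, so $\PP(|H_n-y_0|\ge\varepsilon)\le\exp\big(-\tfrac{1}{2}c_\varepsilon|V_n|\big)$ for all $n$ large; since $|V_n|=(2n+1)^d$ this is summable in $n$, so $\PP(|H_n-y_0|\ge\varepsilon\text{ infinitely often})=0$ by Borel--Cantelli. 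Letting $\varepsilon$ decrease to $0$ along a sequence yields $H_n\to y_0$ almost surely, as claimed. The one genuinely non-routine ingredient is the transfer of the ``unique zero'' property from $I_{\mathcal{Y}}$ to $I$, and thence to $I_g$: this rests on the variational characterisation of $I$ together with the compactness of the level sets of $I_{\mathcal{Y}}$ and the closedness of the graph of $\grave{\Psi}$, both supplied by the proof of Theorem \ref{Theorem Main LDP}; the contraction principle and the Borel--Cantelli step are routine.
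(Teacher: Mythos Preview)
Your proof is correct and follows essentially the same route as the paper: identify $\grave{\Psi}(\mu_{\mathcal{Y}})$ as the unique zero of $I$ via the variational formula and goodness of $I_{\mathcal{Y}}$, then use Borel--Cantelli to upgrade the LDP to almost-sure convergence. In fact your version is more complete than the paper's very terse argument, since you make the contraction-principle step for the LDP of $(\Pi^n_g)$ explicit and give an actual justification for the uniqueness of the zero (the paper simply asserts it and cites Ellis); your phrase ``closedness of the graph of $\grave{\Psi}$'' is slightly loose, but what you need---continuity of $\grave{\Psi}$ on each sublevel set $\{I_{\mathcal{Y}}\le\alpha\}$---follows from Lemmas~\ref{Lem alpha c relation} and~\ref{Lem Uniform Bound mu Ac}.
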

\subsection{Assumptions}\label{Section Assumptions}

We employ the following assumptions. 

In many interacting particle models, such as the Fitzhugh-Nagumo model in Section \ref{Section Application}, the internal dynamics term $\mathfrak{b}$ is not Lipschitz. In particular, $\mathfrak{b}$ is usually strongly decaying when the activity is greatly elevated, so that $\mathfrak{b}$ always acts to restore the particle to its resting state. This decay is necessary in order for the neurons to exhibit their characteristic `spiking' behaviour. The following assumptions can accommodate this non-Lipschitz behaviour.
\begin{assumption}\label{Assumption Absolute Bound}
The following one-sided growth and Lipschitz conditions are assumed. It is assumed that there exists a positive constant $C$ such that for all $\mathbf{z},\mathbf{y} \in \mathbb{R}^m$, and all $1\leq p \leq m$, writing $\mathfrak{b} = (\mathfrak{b}_1,\ldots,\mathfrak{b}_m)$,
\begin{align*}
\mathfrak{b}_p(\mathbf{z}) &\leq C | \mathbf{z} | \\
-\mathfrak{b}_p(\mathbf{z}) &\leq C | \mathbf{z} | \\
 \mathfrak{b}_p(\mathbf{z}) - \mathfrak{b}_p(\mathbf{y}) &\leq C | \mathbf{z} - \mathbf{y} | \\
  \mathfrak{b}_p(\mathbf{y}) - \mathfrak{b}_p(\mathbf{z}) &\leq C | \mathbf{z} - \mathbf{y} | .
\end{align*}
\end{assumption}
 This paper is primarily intended to model sparse networks. The system converges, not because there are $O(|V_n|)$ connections which decrease in magnitude either uniformly as the network grows (as in mean-field models) or as the lattice distance increases (as in \cite{faugeras-maclaurin:16}), but rather because the probability of there being a connection between two particles decreases as the lattice distance increases. We are primarily concerned with models where, if there is a connection between two particles, then it is strong, even if they are a long way apart on the lattice. 
The interactions $\Lambda^k_s(\cdot,\cdot,\cdot)$ in many of the networks that we want to study are typically nonlinear. See for example the model of the interactions in \cite{baladron-fasoli-etal:12b}, and also the example model in Section \ref{Section Application}. 
\begin{assumption}\label{Assumption NonUniform}
We assume that if there is no connection, the interaction is identically $0$, i.e.
 \begin{equation}
 \Lambda^k_s(0,\cdot,\cdot) = 0.
 \end{equation}
Assume that for all $k\in \Z^d$, $\Lambda^k_s(\cdot,\cdot,\cdot)$ is continuous on $[0,T]\times\mathcal{E}\times \T \times \T$, and for each $s\in [0,T]$, $\Lambda^k_s$ is $\mathcal{B}(\R) / \mathcal{B}(\mathcal{E})\times\mathcal{B}\big(\mathcal{C}([0,s],\R)\big)\times \mathcal{B}\big(\mathcal{C}([0,s],\R)\big)$-measurable. For all $x,y\in \mathcal{E}$, $t\in [0,T]$, $U,X,Z \in \T$ and $k\in\Z^d$
\begin{align*} 
\big|\Lambda^{k}_t(x,U,X) - \Lambda_t^{k}(x,U,Z)\big| &\leq  \norm{X-Z}_t,\\
\big| \Lambda^{k}_t(x,U,X) - \Lambda_t^{k}(x,Z,X)\big| &\leq  \norm{U-Z}_t,\\
\big| \Lambda^{k}_t(x,U,X) - \Lambda_t^{k}(y,U,X)\big| &\leq \big(\norm{U}_t + \norm{X}_t \big) |x-y|.
\end{align*}
We also assume that
\begin{equation}\label{eq Lambda abs bound}
\big|\Lambda^k_t(x,X,Z)\big| \leq \big(1+\norm{X}_t \big).
\end{equation}
\end{assumption}
Since, in contrast to the great majority of papers on interacting particles surveyed in the introduction, the magnitude of the interactions is independent of the network size, the effect of the interactions grows rapidly as the entire system grows. The following bound ensures that the system does not blowup as the size scales to infinity.  
\begin{assumption}\label{Assumption Connection Growth}
We assume that there exist positive constants $\mathfrak{a}_1,\mathfrak{a}_2 > 0$, a constant $\rho > 1$ and a positive integer $\mathfrak{m}_0$ such that for all $m \geq \mathfrak{m}_0$,
\begin{align*}\label{defn Ac}
\Exp\bigg[\exp&\bigg(\mathfrak{a}_1|V_m|^{2\rho+2}\exp\big((4+2\rho)TC_J|V_m| \big)\sum_{j\in V_n}\big(\sum_{k\notin V_m}\snorm{J^{n,j,k}} \big)^2\bigg)\bigg] \\  &\leq \exp\big(\mathfrak{a}_2 |V_n| \big)\\
\Exp\bigg[\exp&\bigg(\mathfrak{a}_1|V_m|^{\rho+1}\exp\big((3+\rho)TC_J|V_m| \big)\sum_{j\in V_n}\sum_{k\notin V_m}\snorm{J^{n,j,k}} \bigg)\bigg] \\ &\leq \exp\big(\mathfrak{a}_2 |V_n| \big)
\end{align*}
\end{assumption}

The heart of the method in this paper is to recognize that the solution $U$ of \eqref{eq:fundamentalmult} can be written as an ergodic transformation in an ambient higher-dimensional space $\bar{\T}^{\Z^d}$ containing both the noise and the connections. The Large Deviation Principle for $U$ can then be obtained from the (assumed) Large Deviation Principle of $W^n$ and $J^n$ using transformation methods, since the empirical measure for $U$ can be written as a function of the \textit{double layer} empirical measure which incorporates both $W^n$ and $J^n$ (see Lemma \ref{Lemma Solution Equivalence}). 

We now define the ambient space $\bar{\T}^{\Z^{d}}$ containing both the connections and the disorder. Let $\bar{\mathcal{T}} = \mathcal{T}\times \mathcal{E}^{\Z^d}$, with $\mathcal{E}^{\Z^d}$ endowed with the cylindrical topology and $\bar{\mathcal{T}}$ with the product topology. Note that $\bar{\mathcal{T}}$ is separable. For $l\in \Z^d$, let $\bar{S}^l: \bar{\T}^{\Z^{d}} \to \bar{\T}^{\Z^{d}}$ be $(\bar{S}^l\cdot \mcX)^{j} := \mcX^{j+l}$. Let $\mathcal{P}_{\bar{S}}\big(\bar{\T}^{\Z^{d}} \big)$ be the set of all stationary measures, i.e. such that $\mu \in \mathcal{P}_{\bar{S}}\big(\bar{\T}^{\Z^{d}} \big)$ if and only if for all $k\in\Z^{d}$,
\begin{equation}
\mu\circ(\bar{S}^k)^{-1} = \mu.\label{eq bar S stationary}
\end{equation}
For $j\in V_n$ and $k\notin V_n$, let $J^{n,j,k} = 0$ (note that this doesn't affect the dynamics in \eqref{eq:fundamentalmult}). Let $\tilde{\mcY}^n$ be the $\bar{\T}^{\Z^{d}}$-valued random variable such that $\tilde{\mcY}^{n,p} := \big(W^{n,p\modd V_n}, \lbrace J^{n,p\modd V_n,k}\rbrace_{k\in\Z^d}\big)$. Let the law of 
\begin{equation}\label{defn double layer empirical measure}
\hat{\mu}^n(\mathcal{Y}^n) := \frac{1}{|V_n|}\sum_{k\in V_n}\delta_{\bar{S}^k \tilde{\mcY}^n}
\end{equation}
be $\Pi^n_{\mcY} \in \mathcal{P}\big(\mathcal{P}_{\bar{S}}(\bar{\T}^{\Z^{d}})\big)$. $\hat{\mu}^n(\mathcal{Y}^n)$ is sometimes known as the \textit{double-layer} empirical measure, as it incorporates both the noise and the random environment \cite{dai1996mckean}.

\begin{assumption}\label{Assumption LDP}
The series of laws $\big(\Pi^n_{\mathcal{Y}}\big)_{n\in \Z^+}$ is assumed to satisfy a Large Deviation Principle with good rate function $I_{\mathcal{Y}} :\mathcal{P}_{\bar{S}}\big(\bar{\T}^{\Z^d}\big) \to \R^+$. 
\end{assumption}
 In many proofs of level-3 Large Deviations results, one starts from an i.i.d process, and applies standard transformation methods (either an exponential change of measure, as with LDPs for Gibbs distributions \cite{georgii:88}, or a moving average transformation \cite{donsker-varadhan:85,e16126722,faugeras-maclaurin:16}). If $W^n$ is independent of $J^n$, and the empirical measures of $\hat{\mu}^n(W^n)$ and $\hat{\mu}^n(J^n)$ each satisfy LDPs obtained through these transformations, then generally $\hat{\mu}^n(\mathcal{Y}^n)$ will satisfy an LDP as well. An example of this is given in Section \ref{Section Application}.
\begin{assumption}\label{Assumption Noise LDP}
It is assumed that there exist positive constants $\mathfrak{e}_1,\mathfrak{e}_2$ such that
\begin{equation}\label{eq: a limit}
\lsup{n}\frac{1}{|V_n|}\log \Exp\bigg[\exp\bigg(\mathfrak{e}_1\sum_{j\in V_n}\norm{W^{n,j}}_T^2 \bigg) \bigg] \leq \exp\big(|V_n|\mathfrak{e}_2\big).
\end{equation}
\end{assumption}

\section{An Example Application: A Fitzhugh-Nagumo Neural Network with Gibbsian Random Connectivity and Learning}\label{Section Application}

In this section we outline an example from neuroscience of a model satisfying \eqref{eq:fundamentalmult} and the assumptions of Section \ref{Section Assumptions}. Most of these assumptions are satisfied relatively easily, and we therefore omit the proofs. Of course there are many other sorts of model which would fit the framework of this paper.

We take the internal dynamics to be that of the Fitzhugh-Nagumo model,  the connection matrix to be random and Gibbsian, with the interaction terms driven by the firing rates of the pre and post synaptic neurons, and the connections evolving according to a learning rule. We take $d \in \lbrace 1,2,3\rbrace$. For $j\in V_n$, the governing equations are
\begin{align}
dv^j_t =&  dW^{n,j}_t\label{eq: fitzhugh 1}+\bigg(\sum_{k\in V_n}G_t^{k}(J^{n,j,k},v^j,v^{(j+k)\modd V_n})\times \\& f(v^j_t) f\big(v^{(j+k)\modd V_n}_t\big)+ v^j_t - \frac{1}{3}(v^j_t)^3 - w^j_t \bigg)dt , \nonumber \\
dw^j_t =& \big(v^j_t + \mathfrak{a} - \mathfrak{c}w^j_t\big)dt.\label{eq: fitzhugh 2}
\end{align}
We take $w^j_0 = v^j_0 = 0$ as initial conditions. Here $\mathfrak{a}$ and $\mathfrak{c}$ are positive constants, and $f$ is bounded and Lipschitz. The internal dynamics of the above equation is that of the famous Fitzhugh-Nagumo model \cite{fitzhugh:55,nagumo-arimoto-etal:62,fitzhugh:66,fitzhugh:69}.  This model distils the essential mathematical features of the Hodgkin-Huxley model, yielding excitation and transmission properties from the analysis of the biophysics of sodium and potassium flows. The variable $v$ is the `fast' variable which corresponds approximately to the voltage, and $w$ is the `slow' recovery variable which is dominant after the generation of an action potential.


We take the independent inputs to be uncorrelated Ornstein-Uhlenbeck Processes $\lbrace W^{n,j}\rbrace_{j\in V_n}$. It has been proved in \cite[Lemma 17]{faugeras-maclaurin:16} that Assumption \ref{Assumption Noise LDP} is satisfied.

\subsection{Model of Interactions}
The interaction term is a simplification of the chemical synapse models in \cite{destexhe-mainen-etal:94,ermentrout-terman:10}. It is assumed that the existence of a connection between neurons $j$ and $k$ is random and determined by the random variable $J^{n,j,k} \in \lbrace 0,1\rbrace$. The model of $J^{n,j,k}$ is outlined in the following section. In the formalism of the previous section, $\mathcal{E} = \lbrace 0,1\rbrace$, with $0 = 0$. Letting $\bar{J}$ be the maximal connection strength (so that $C_J = \bar{J}$ in Assumption \ref{Assumption NonUniform}), we define $d_{\mathcal{E}}(0,1) = \bar{J}$ and $d_{\mathcal{E}}(0,0) = d_{\mathcal{E}}(1,1) = 0$. We specify that
\[
G_t^k(0,\cdot,\cdot) = 0.
\]
Otherwise, we take $G^{k}_s(1,\cdot,\cdot)$ to evolve according to the following learning rule. We  use the following classical Hebbian Learning model (refer to \cite{gerstner-kistler:02b} for a more detailed description, and in particular equation 10.6) to specify $G^{k}_t$.

The `activity' of  neuron $j$ at time $t$ is given as $\mathfrak{v}(U^j_t)$. Here $\mathfrak{v}: \R\to \R$ is Lipschitz continuous, positive and bounded. The evolution equation is defined to be, for $X,Y \in \T$,
\begin{multline}
\frac{d}{dt}G^k_t(1,X,Y) = J^{corr}\left(\bar{J} - G^k_t(1,X,Y)\right)\mathfrak{v}(X_t)\mathfrak{v}(Y_t) - J^{dec}G^{k}_t(1,X,Y).\label{eq:Lambdadefn}
\end{multline}
Here $J^{corr},J^{dec},\bar{J}$ are non-negative constants (if we let them be zero then we obtain weights which are constant in time). Initially, if $J^{n,j,k}\neq 0$, we stipulate that
\begin{equation}
G^{k}_0(1,X,Y) := G_{ini}
\end{equation}
where $G_{ini} \in [0,\bar{J}]$ is a constant stipulating the initial strength of the weights. It is straightforward to show that there is a unique solution to the above differential equation for all $X,Y \in \C([0,T],\R^2)$. One may show that $G^{k}_t(1,\cdot,\cdot) \leq \bar{J}$. In effect, the solution defines $G^{k}_t(1,\cdot,\cdot)$ as a function $\C([0,t],\R) \times \C([0,t],\R) \to \R$, which can be shown to be uniformly Lipschitz in both of its variables, where $\C([0,t],\R)$ is endowed with the supremum norm.

The random connections $J^{n,j,k}$ are assumed to be identically zero once $|k-j|$ (modulo the torus) exceeds some threshold. 

\section{Proof of Theorem \ref{Theorem Main LDP}}\label{Section Proofs}

We  obtain the LDP for $(\Pi^n)_{n\in\Z^+}$ by applying a series of transformations to the LDP for $\big(\Pi^n_{\mcY}\big)_{n\in\Z^+}$ (which is assumed in Assumption \ref{Assumption LDP}). The proof of our main result in Theorem \ref{Theorem Main LDP} is outlined just below. The rest of the document consists of lemmas auxiliary to this proof. 

We must first define a metric on the space $\mathcal{P}\big(\bar{\T}^{Z^{d}}\big)$, which, as noted in the previous section, is the space in which the double layer empirical measure $\hat{\mu}^n(\mathcal{Y}^n)$ lives. Recalling that $\bar{V}_q := \left\lbrace (j,k) \in \Z^{2d} | j,k\in V_q\right\rbrace$ and $\bar{\T}^{\bar{V}_q} := \lbrace (Y^j,\omega^{j,k})_{(j,k)\in \bar{V}_q} | Y^j \in \T \text{ and }\omega^{j,k}\in\mathcal{E}\rbrace$, let $\bar{d}_q^{\mathcal{P}}$ be the Levy-Prokhorov metric on $\mathcal{P}\big(\bar{\T}^{\bar{V}_q}\big)$ generated by the following metric on $\bar{\T}^{\bar{V}_q}$,  
 \begin{equation}
 \bar{d}_q(\mathcal{Y},\mathcal{Z}) := \sum_{j\in V_q}\norm{R^j - X^j}_T + \bigg(\sum_{j,k\in V_q}d_{\mathcal{E}}(\omega^{j,k},\beta^{j,k})^2\bigg)^{\frac{1}{2}},
 \end{equation}
where $\mathcal{Y} := (R^j,\omega^{j,k})_{j,k\in V_q}, \mathcal{Z} := (X^j,\beta^{j,k})_{j,k\in V_q}$. Let $\pi^{\mathcal{P}}_{\bar{V}_q}: \mathcal{P}(\bar{\T}^{\Z^d}) \to \mathcal{P}(\bar{\T}^{\bar{V}_q})$ be the projection onto the marginal measure over $\bar{\T}^{\bar{V}_q}$ and define $\bar{d}^{\mathcal{P}}$ be the following complete metric on $\mathcal{P}\big(\bar{\T}^{\Z^d}\big)$ (which metrizes weak convergence with respect to the cylindrical topology),
 \begin{equation}\label{eq: metric definition 3}
 \bar{d}^{\mathcal{P}}(\mu,\nu) = \sum_{j=1}^{\infty}\min\bigg(2^{-j},\bar{d}_j^{\mathcal{P}}\big(\pi^{\mathcal{P}}_{\bar{V}_j}\mu,\pi^{\mathcal{P}}_{\bar{V}_j}\nu\big)\bigg).
 \end{equation}
\begin{proof}[Proof of Theorem \ref{Theorem Main LDP}]

For $c\in \R^+$, recalling the definition of $\mathcal{P}_{\bar{S}}\big(\bar{\T}^{\Z^d}\big)$ in \eqref{eq bar S stationary}, let
\begin{multline}\label{defn Ac}
\mathcal{A}_c  = \left\lbrace\mu \in \mathcal{P}_{\bar{S}}\big( \bar{\T}^{\Z^d}\big) | \Exp^{\mu} \left[ \norm{X^0}^2_T\right] \leq c \text{ and for all }m\geq \mathfrak{m}_0 \right. \\  \Exp^{\mu}\big[\big(\sum_{k\notin V_m}\snorm{\omega^{0,k}} \big)^2\big]  \leq c\exp\big(-(4+2\rho)TC_J|V_m| \big)|V_m|^{-2\rho-2}\text{ and }\\
\Exp^{\mu}\big[ \sum_{k\notin V_m}\snorm{\omega^{0,k}} \big] \leq c\exp\big(-(3+\rho)TC_J|V_m| \big)|V_m|^{-\rho-1},
\bigg\rbrace,
\end{multline}
where we recall that $\mathfrak{m}_0$ and $\rho$ are defined in Assumption \ref{Assumption Connection Growth}. It may be observed that $\mathcal{A}_c$ is a closed subset of $ \mathcal{P}_{\bar{S}}\big( \bar{\T}^{\Z^d}\big)$.

We now define maps $\Psi,\Psi^m:\bar{\T}^{\Z^{d}}\to \T^{\Z^d}$. These are used to transform the LDP for $(\Pi^n_{\mathcal{Y}})_{n\in\Z^+}$ into an LDP for $(\Pi^n)_{n\in\Z^+}$. Using the above definitions, for $\mathcal{Z}\in \bar{\T}^{\Z^{d}}$, with $\mathcal{Z}^{j}:= (R^j,(\omega^{j,l})_{l\in\Z^d})$, we define $\Psi(\mathcal{Z}) := X$ to be such that for any $t\in [0,T]$ and $j\in \Z^d$,
\begin{align}
X^{j}_t := \int_0^t \bigg(\mathfrak{b}(X_s^{j}) + \sum_{k\in \Z^d}\Lambda^{k}_s\big(\omega^{j,k},X^j,X^{j+k}\big)\bigg)ds+ R^j_{t}.\label{eq:fundamentalmult 1}
\end{align}
Define $\Psi^m:\bar{\T}^{\Z^{d}} \to \T^{\Z^d}$ to be $\Psi^m(\mathcal{Z}) := Z$, where $Z$ satisfies
\begin{align}\label{eq: first Psi m definition}
Z^{j}_t := \int_0^t \bigg(\mathfrak{b}(Z_s^{j}) + \sum_{k\in V_m}\Lambda^{k}_s\big(\omega^{j,k},Z^j,Z^{j+k}\big)\bigg)ds+ R^j_{t}.
\end{align}
In fact both $\Psi$ and $\Psi^m$ are only well-defined on a subset of $\bar{\T}^{\Z^d}$ (the domain of $\Psi^m$ is explained at the start of Section \ref{Section Psi m}, and the domain of $\Psi$ may be inferred from Lemma \ref{Lem Uniform Bound mu Ac}). Let 
\begin{equation}
\tilde{\Psi}^m : \cup_{c\in \R^+}\mathcal{A}_c \to \mathcal{P}_S\big(\T^{\Z^d}\big)
\end{equation}
be $\tilde{\Psi}^m(\mu) = \mu \circ (\Psi^m)^{-1}$. In Section \ref{Section Psi m} we prove that $\tilde{\Psi}^m$ is well-defined and continuous on $\mathcal{A}_c$. It follows from Lemma \ref{Lemma Solution Equivalence} that $\tilde{\Psi}^n\big(\hat{\mu}(\mathcal{Y}^n)\big) = \hat{\mu}^n(U)$. 

Now, there exists a continuous function $\grave{\Psi}^m: \mathcal{P}_{\bar{S}}\big(\bar{\T}^{\Z^d}\big) \to \mathcal{P}_S\big(\T^{\Z^d} \big)$ such that $\grave{\Psi}^m(\mu) = \tilde{\Psi}^m(\mu)$ for all $\mu \in \mathcal{A}_m$. To see this, note that we may identify $\mathcal{P}_S\big(\T^{\Z^d} \big)$ as a closed convex subset of the locally-convex vector space $\mathcal{M}$ of all finite Borel measures on $\T^{\Z^d}$ (where $\mathcal{M}$ has the topology of weak convergence). Since $\mathcal{A}_m$ is closed, the existence of $\grave{\Psi}^m$ follows from \cite[Theorem 4.1]{dugundji1951extension}. 

We define $\grave{\Psi}:\mathcal{P}_{\bar{S}}(\bar{\T}^{\Z^d}) \to \mathcal{P}_S(\T^{\Z^d})$ as follows.
\begin{align}
\grave{\Psi}(\mu) &= \mu \circ \Psi^{-1} \text{ if }\mu\in \cup_{c\geq 0}\mathcal{A}_c \label{eq: grave Psi 1}\\
\grave{\Psi}(\mu) &= \beta \text{ otherwise, for some arbitrary fixed $\beta \in \mathcal{P}_S(\T^{\Z^d})$.}\label{eq: grave Psi 2}
\end{align}
Note that it does not matter how we define $\grave{\Psi}$ outside of $\cup_{c\geq 0}\mathcal{A}_c$.

We use \cite[Theorem 4.2.23]{dembo-zeitouni:97} to prove the result. First note that by Assumption \ref{Assumption LDP}, the laws $\big(\Pi^n_{\mathcal{Y}}\big)_{n\in\Z^+}$ satisfy an LDP with good rate function. To use \cite[Theorem 4.2.23]{dembo-zeitouni:97}, since $\tilde{\Psi}^n\big(\hat{\mu}(\mathcal{Y}^n)\big) = \hat{\mu}^n(U)$ (as noted in Lemma \ref{Lemma Solution Equivalence}), we must first verify the `exponentially good approximations' property, namely that for arbitrary $\alpha,\delta > 0$, there exists an $m$ such that
\begin{equation}\label{eq: to prove 1}
\lsup{n}\frac{1}{|V_n|}\log \mathbb{P}\bigg( d^{\mathcal{P}}\big(\grave{\Psi}^m(\hat{\mu}^n(\mathcal{Y}^n)),\tilde{\Psi}^n(\hat{\mu}^n(\mathcal{Y}^n)) \big) > \delta \bigg) \leq -\alpha.
\end{equation}
To see this, by Lemma \ref{Lem Bound hat mu in Ac}, there exists a $c \in \R^+$ such that
\begin{equation}
\lsup{n}\frac{1}{|V_n|}\log \mathbb{P}\bigg( \hat{\mu}^n(\mathcal{Y}^n) \notin \mathcal{A}_c \bigg) \leq -\alpha.
\end{equation}
But by Lemma \ref{Lem Uniform Bound mu Ac}, there exists an $m \geq c$ such that if $\hat{\mu}^n(\mathcal{Y}^n) \in \mathcal{A}_c$, then \newline$d^{\mathcal{P}}\big(\grave{\Psi}^m(\hat{\mu}^n(\mathcal{Y}^n)),\tilde{\Psi}^n(\hat{\mu}^n(\mathcal{Y}^n)) \big) \leq \delta$. This establishes \eqref{eq: to prove 1}.

To apply the result in \cite[Theorem 4.2.23]{dembo-zeitouni:97}, we also require that for each $\alpha\in\R^+$, writing $\mathcal{A}_\alpha^* = \lbrace \mu \in \mathcal{P}_{\bar{S}}\big(\bar{\T}^{\Z^d} \big) | I_{\mathcal{Y}}(\mu) \leq \alpha \rbrace$,
\begin{equation}\label{eq reall last jespere}
\lim_{m\to\infty}\sup_{\mu\in \mathcal{A}_{\alpha}^*}d^{\mathcal{P}}\big(\grave{\Psi}(\mu),\grave{\Psi}^m(\mu)\big) = 0.
\end{equation}
Thanks to Lemma \ref{Lem alpha c relation}, it suffices to prove that for any $c\in \R^+$,
\begin{equation}
\lim_{m\to\infty}\sup_{\mu\in \mathcal{A}_{c}}d^{\mathcal{P}}\big(\grave{\Psi}(\mu),\grave{\Psi}^m(\mu)\big) = 0.\label{eq: last to prove jespere}
\end{equation}
Now once $m \geq c$, $\grave{\Psi}^m(\mu) = \mu \circ (\Psi^m)^{-1}$, and since $\grave{\Psi}(\mu) = \lim_{m\to\infty}\grave{\Psi}^m(\mu)$ (thanks to Lemma \ref{Lem Uniform Bound mu Ac}),
\begin{equation*}
\lim_{m\to\infty}\sup_{\mu\in \mathcal{A}_{c}}d^{\mathcal{P}}\big(\grave{\Psi}(\mu),\grave{\Psi}^m(\mu)\big) \leq 2\lim_{m\to\infty}\sup_{n\geq m}\sup_{\mu\in\mathcal{A}_c}d^{\mathcal{P}}\big(\tilde{\Psi}^m(\mu) ,\tilde{\Psi}^n(\mu) \big).
\end{equation*}
\eqref{eq: last to prove jespere} follows as a consequence of the above and Lemma \ref{Lem Uniform Bound mu Ac}.

The theorem now follows from the fact that the laws $\big(\Pi^n_{\mathcal{Y}}\big)_{n\in\Z^+}$ satisfy an LDP with good rate function, $\grave{\Psi}^m$ is continuous, \eqref{eq: to prove 1}, \eqref{eq reall last jespere} and  \cite[Theorem 4.2.23]{dembo-zeitouni:97}.
\end{proof}The following lemma notes that $\Psi^n$ preserves $\bar{V}_n$-periodicity.
\begin{lemma}\label{Lemma Solution Equivalence}
For each $W^n \in \T^{V_n}$, there exists a unique solution $U$ to \eqref{eq:fundamentalmult}. This solution satisfies, for all $p\in\Z^d$,
\begin{align*}
\Psi^n(\tilde{\mathcal{Y}}^n)^p &= U^{p \modd V_n}.
\end{align*}
Furthermore
\begin{equation}
\hat{\mu}^n(U) = \hat{\mu}^n(\mathcal{Y}^n) \circ (\Psi^n)^{-1}.\label{eq:equivalence of empirical measures}
\end{equation}
\end{lemma}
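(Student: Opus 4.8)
The plan is to prove the three assertions in turn, with the last two resting on a single periodicity-plus-uniqueness observation. First I would establish well-posedness of \eqref{eq:fundamentalmult}: this is a closed system of $|V_n|$ coupled integral equations on $[0,T]$ whose drift grows at most linearly in $\norm{U^j}_t$ (by \eqref{eq Lambda abs bound} and $\snorm{J^{n,j,k}} \le C_J$) and is dissipative in the $\mathfrak{b}$-term (Assumption \ref{Assumption Absolute Bound}). A truncation argument together with the a priori bound furnished by the one-sided estimates of Assumption \ref{Assumption Absolute Bound} --- which rule out finite-time blow-up --- gives a solution on all of $[0,T]$; uniqueness comes from a Gr\"onwall estimate for $\mathcal{E}_t := \sum_{j\in V_n}\norm{U^j - U'^j}_t^2$, exploiting that $U^j - U'^j$ is $C^1$ (the shared noise $W^{n,j}$ cancels), so that on each of $\{U^j_t \ge U'^j_t\}$ and $\{U^j_t \le U'^j_t\}$ the one-sided bounds give $\frac{d}{dt}(U^j_t - U'^j_t)^2 \le 2C\norm{U^j - U'^j}_t^2$, while the Lipschitz bounds of Assumption \ref{Assumption NonUniform} control the interaction differences. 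This stage is routine and parallels the well-posedness of FitzHugh--Nagumo-type systems (cf. \cite{faugeras-maclaurin:16}).

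Next I would identify $\Psi^n(\tilde{\mathcal{Y}}^n)$. Observe that $\tilde{\mathcal{Y}}^n$ is $V_n$-periodic in its lattice index, since $\tilde{\mathcal{Y}}^{n,p} = (W^{n,p\modd V_n},\{J^{n,p\modd V_n,k}\}_{k\in\Z^d})$; being uniformly bounded it lies in the domain of $\Psi^n$ and $\Psi^n(\tilde{\mathcal{Y}}^n)$ is well-defined. Let $\tilde U^p := U^{p\modd V_n}$. Substituting $\tilde U$ into the defining equation \eqref{eq: first Psi m definition} of $\Psi^n$ with input $\tilde{\mathcal{Y}}^n$, and using $\tilde U^{p+k} = U^{(p+k)\modd V_n}$ together with $((p\modd V_n)+k)\modd V_n = (p+k)\modd V_n$, one checks that $\tilde U$ solves \eqref{eq: first Psi m definition} precisely because $U$ solves \eqref{eq:fundamentalmult}. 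Moreover the $\Psi^n$-dynamics preserves $V_n$-periodicity when the input is $V_n$-periodic --- a full-period lattice shift of a solution solves the same equation, hence coincides with it by the Gr\"onwall uniqueness argument run over uniformly bounded configurations on $\Z^d$ --- and a $V_n$-periodic solution restricts on $V_n$ to a solution of \eqref{eq:fundamentalmult}. Hence $\Psi^n(\tilde{\mathcal{Y}}^n) = \tilde U$, i.e. $\Psi^n(\tilde{\mathcal{Y}}^n)^p = U^{p\modd V_n}$.

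Finally, the same substitution shows that $S^k\tilde U$ solves the $\Psi^n$-equation driven by the lattice-shifted input $\bar{S}^k\tilde{\mathcal{Y}}^n$, so $\Psi^n(\bar{S}^k\tilde{\mathcal{Y}}^n) = S^k\tilde U$; pushing forward \eqref{defn double layer empirical measure} and using \eqref{defn empirical measure} then gives
\[
\hat{\mu}^n(\mathcal{Y}^n)\circ(\Psi^n)^{-1} = \frac{1}{|V_n|}\sum_{k\in V_n}\delta_{\Psi^n(\bar{S}^k\tilde{\mathcal{Y}}^n)} = \frac{1}{|V_n|}\sum_{k\in V_n}\delta_{S^k\tilde U} = \hat{\mu}^n(U),
\]
which is \eqref{eq:equivalence of empirical measures}. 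The main obstacle I anticipate is the a priori bound underpinning global existence for the non-Lipschitz, dissipative drift; a secondary but purely bookkeeping difficulty is keeping the modular shifts straight in the equivariance step, and the remaining manipulations are direct substitutions.
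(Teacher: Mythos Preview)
Your proposal is correct and follows essentially the same route as the paper: the equivariance identity $\Psi^n(\bar{S}^k\tilde{\mathcal{Y}}^n)=S^k\Psi^n(\tilde{\mathcal{Y}}^n)$ is the key step, and the empirical-measure identity is then a direct pushforward computation, exactly as in the paper's proof. The only organisational difference is that you argue existence/uniqueness for the finite $|V_n|$-dimensional system directly (truncation plus the one-sided Gr\"onwall), whereas the paper simply invokes Lemma~\ref{lem: Psin Psi} on the periodic input $\tilde{\mathcal{Y}}^n$; both routes use the same ingredients and yield the same conclusion.
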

\begin{proof}
The existence and uniqueness of the solution \eqref{eq:fundamentalmult} follows from Lemma \ref{lem: Psin Psi}. It follows from the definition that for all $j\in \Z^d$, $\Psi^n(\bar{S}^j \tilde{\mathcal{Y}}^n) = S^j\Psi(\tilde{\mathcal{Y}}^n)$. \eqref{eq:equivalence of empirical measures} follows directly from this and the definition of the empirical measure.
\end{proof}
\begin{proof}[Proof of Corollary \ref{Cor: Convergence Averages}]
The assumptions mean that $\mu_{\mathcal{Y}}$ is the unique probability measure such that $I_{\mathcal{Y}}(\mu_{\mathcal{Y}}) = 0$. This means, in turn, that $\grave{\Psi}(\mu_{\mathcal{Y}})$ is the unique zero of $I$. It can then be shown using the Borel-Cantelli Lemma that $\hat{\mu}^n(U) \to \grave{\Psi}(\mu_{\mathcal{Y}})$ almost surely (see for instance \cite[Theorem II.6.4]{ellis:85}).
\end{proof}
\section{Existence and continuity of $\tilde{\Psi}^m$}\label{Section Psi m}

The map $\Psi^m$ (as defined in \eqref{eq: first Psi m definition}) is not well-defined on all of $\bar{\T}^{\Z^d}$, and it is not necessarily continuous on closed subsets either. Our first task therefore is to define $\Psi^m$ on a weighted subspace $\bar{\T}_{\lambda_m}^{\Z^d}$ of $\bar{\T}^{\Z^d}$ (which we outline in further detail below). We then show that the induced map $\mu \to \mu \circ (\Psi^m)^{-1}$ is continuous on the closed sets $\mathcal{A}_c \subset \mathcal{P}_{\bar{S}}\big(\bar{\T}^{\Z^d} \big)$. The integer $m$ is fixed throughout this section.

We start by defining the weights $\lbrace \lambda^j_m\rbrace_{j\in\Z^d}$ which are used to modulate the convergence (similar methods have been used previously in, for example,  \cite{shiga-shimizu:80}). For $\theta \in [-\pi,\pi]^d$, let $\kappa_m^k = \mathbf{1}_{k\in V_m}$, $\tilde{\kappa}_m(\theta) = \sum_{k\in \Z^d}\kappa_m^k\exp\big(-i\langle \theta,k\rangle \big)$ and
\[
\tilde{\lambda}_m(\theta) = h\bigg(\rho|V_m| - \tilde{\kappa}_m(\theta) \bigg)^{-1},
\]
where we recall that $\rho>1$ is used in the definition of $\mathcal{A}_c$ in \eqref{defn Ac} and $h$ is chosen to ensure that
\begin{equation}\label{defn h}
\frac{1}{(2\pi)^d}\int_{[-\pi,\pi]^d}\tilde{\lambda}_m(\theta)d\theta = 1.
\end{equation}
Let $\lambda_m^j= \frac{1}{(2\pi)^d}\int_{[-\pi,\pi]^d}\exp\big( i\langle\theta,j\rangle\big)\tilde{\lambda}_m(\theta)d\theta$. In the case that $\rho =2$, it was proved in \cite[Lemma 5]{faugeras-maclaurin:16} that 
\begin{align}\label{eq total lambda}
\sum_{j\in\Z^d}\lambda^j_m &= 1, \\
\lambda^j_m &> 0,\\
\sum_{k\in V_m}\lambda_m^{j-k} \kappa^k_m &\leq \rho|V_m|\lambda^j_m. \label{eq: lambdakappa2prelim}
\end{align}
The above identities easily generalise to any $\rho > 1$. An immediate consequence of \eqref{eq: lambdakappa2prelim} is, for any non-negative $(\upsilon^j)_{j\in\Z^d} \subset \R$,
\begin{equation}
\sum_{j\in\Z^d,k\in V_m}\lambda^j_m\upsilon^{j+k} \leq \rho|V_m|\sum_{j\in \Z^d}\lambda^j_m\upsilon^j, \label{eq: lambdakappa2}
\end{equation}
as long as the above sums are finite. Let $\bar{\T}_{\lambda_m}^{\Z^d}$ be the space of all $\mathcal{Z} := \big(R^j,(\omega^{j,k})_{k\in\Z^d}\big)_{j\in\Z^d} \in \bar{\T}^{\Z^d}$ such that
\begin{align}
\sum_{j\in\Z^d}\lambda^j_m \norm{R^j}_T^2 < \infty \\ 
\sum_{j\in\Z^d}\lambda^j_m \sum_{k\in\Z^d}\snorm{\omega^{j,k}}^2 < \infty.
\end{align}

Define $\Psi^m: \bar{\T}^{\Z^d}_{\lambda_m}\to \T^{\Z^d}$ as follows. Using the above definitions, for $\mathcal{Z}\in \bar{\T}^{\Z^d}$, with $\mathcal{Z}^{j} := \big(R^j,(\omega^{j,l})_{l\in\Z^d}\big)$, we define $\Psi^m(\mathcal{Z}) := X = (X^j_t)_{j\in\Z^d,t\in [0,T]}$ to be such that for any $t\in [0,T]$ and $j\in \Z^d$,
\begin{align}
X^{j}_t &:= \int_0^t \bigg(\mathfrak{b}(X_s^{j}) + \sum_{k\in V_m}\Lambda^{k}_s\big(\omega^{j,k},X^j,X^{(j+k)\modd V_m}\big)\bigg)ds+ R^j_{t}.\label{eq:fundamentalmult 2}
\end{align}

\begin{lemma}\label{lem: Psin Psi}
For each $\mathcal{Z}\in \bar{\T}_{\lambda_m}^{\Z^{d}}$, $\Psi^m(\mathcal{Z})$ exists and is unique.
\end{lemma}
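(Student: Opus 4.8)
The plan is to treat \eqref{eq:fundamentalmult 2} as a coupled integral equation in the weighted space $\bar{\T}^{\Z^d}_{\lambda_m}$ and to argue by (i) a pathwise a priori bound on each coordinate, (ii) a compactness fixed‑point argument for existence, and (iii) a weighted Gr\"onwall estimate for uniqueness, along the lines of \cite{faugeras-maclaurin:16}. Throughout I would use that the interaction in \eqref{eq:fundamentalmult 2} has finite range $V_m$ and that $\snorm{\cdot}\le C_J$.

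\emph{A priori bound.} I would first show that every solution satisfies $\norm{X^j}_T\le B^j$, with $B^j$ explicit in terms of $\norm{R^j}_T$, $T$, $C$, $C_J$ and $|V_m|$. Since $\mathfrak b$ is controlled only on one side and $R^j=W^{n,j}$ is merely continuous, one cannot multiply by $X^j$ and run Gr\"onwall directly; instead, for fixed $t$ choose $s^\ast\le t$ attaining $\sup_{s\le t}X^j_s$ (nothing to prove if this sup is $\le 0$) and let $\sigma$ be the last time before $s^\ast$ with $X^j_\sigma=0$ (or $\sigma=0$, using $X^j_0=R^j_0=0$). On $(\sigma,s^\ast]$ one has $X^j_s>0$, so Assumption~\ref{Assumption Absolute Bound} gives $\mathfrak b_s(X^j)\le C\norm{X^j}_s$, while \eqref{eq Lambda abs bound} gives $\big|\sum_{k\in V_m}\Lambda^k_s(\cdots)\big|\le C_J|V_m|(1+\norm{X^j}_s)$; writing $X^j_{s^\ast}=(R^j_{s^\ast}-R^j_\sigma)+\int_\sigma^{s^\ast}(\mathfrak b_s(X^j)+\sum_{k\in V_m}\Lambda^k_s(\cdots))\,ds$, running the symmetric estimate for $-X^j$, and applying Gr\"onwall yields the bound. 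Summing $\lambda^j_m(B^j)^2$ and using $\sum_j\lambda^j_m\norm{R^j}_T^2<\infty$ (definition of $\bar{\T}^{\Z^d}_{\lambda_m}$) together with $\sum_j\lambda^j_m=1$ then gives $\sum_j\lambda^j_m\norm{X^j}_T^2<\infty$.

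\emph{Existence and uniqueness.} For existence I would apply the Schauder--Tychonoff fixed point theorem to the map $X\mapsto Y$, $Y^j_t:=R^j_t+\int_0^t(\mathfrak b_s(X^j)+\sum_{k\in V_m}\Lambda^k_s(\cdots))\,ds$, on the compact convex subset of $\T^{\Z^d}$ cut out by the a priori bounds together with the equicontinuity moduli — the latter supplied by the local boundedness clause of Assumption~\ref{Assumption Absolute Bound}, by \eqref{eq Lambda abs bound}, and by the uniform continuity of the paths $R^j$; continuity of the map for the cylindrical topology follows from continuity of $\mathfrak b$ and the $\Lambda^k$ plus dominated convergence, only finitely many coordinates entering each component. (If \eqref{eq:fundamentalmult 2} is read with $(j+k)\modd V_m$, the variables indexed by $V_m$ form a closed finite subsystem, which one may instead solve by Peano continuation using the no‑blow‑up bound, and then each scalar equation for $j\notin V_m$ in turn.) For uniqueness, set $D^j:=X^j-\tilde X^j$ for two solutions; the noise cancels, so $D^j\in\mathcal C^1$ and $\tfrac{d}{dt}\tfrac{1}{2}(D^j_t)^2=D^j_t\dot D^j_t$. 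The one‑sided Lipschitz bounds of Assumption~\ref{Assumption Absolute Bound} give $D^j_t(\mathfrak b_t(X^j)-\mathfrak b_t(\tilde X^j))\le C\norm{D^j}_t^2$ irrespective of the sign of $D^j_t$, and the first two Lipschitz bounds of Assumption~\ref{Assumption NonUniform}, after inserting the mixed term $\Lambda^k_s(\omega^{j,k},\tilde X^j,X^{j+k})$ and using $2ab\le a^2+b^2$, bound the interaction contribution by $\tfrac{1}{2}C_J|V_m|\norm{D^j}_t^2+\tfrac{1}{2}\sum_{k\in V_m}\snorm{\omega^{j,k}}\norm{D^{j+k}}_t^2$ (with the obvious change for $(j+k)\modd V_m$). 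Integrating, taking $\sup_{s\le t}$, multiplying by $\lambda^j_m$, summing over $j$, and using \eqref{eq: lambdakappa2} to absorb $\sum_{j\in\Z^d,k\in V_m}\lambda^j_m\norm{D^{j+k}}_s^2\le\rho|V_m|\sum_j\lambda^j_m\norm{D^j}_s^2$ closes a Gr\"onwall inequality for $t\mapsto\sum_j\lambda^j_m\norm{D^j}_t^2$; this vanishes at $t=0$, hence identically, so $D^j\equiv 0$.

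\emph{Main obstacle.} The delicate step is the a priori bound: because $\mathfrak b$ is only one‑sided and $R^j$ merely continuous, the standard energy estimate is unavailable and one must instead isolate the excursions of $X^j$ away from $0$ on which the one‑sided bound of Assumption~\ref{Assumption Absolute Bound} is effective. Everything after that — the Schauder--Tychonoff setup and the weighted Gr\"onwall estimate — is routine bookkeeping and parallels \cite{faugeras-maclaurin:16}.
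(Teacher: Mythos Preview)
Your a~priori bound and uniqueness arguments are essentially correct and parallel Lemmas~\ref{Lemma: bound exp Z} and~\ref{Lemma: Bound XZ 1} of the paper (the paper closes the Gr\"onwall in a weighted $\ell^1$ norm rather than your $\ell^2$, but either works). The gap is in the existence step.

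Schauder--Tychonoff requires the map $X\mapsto Y$ to send your compact convex set into itself, and this fails. Assumption~\ref{Assumption Absolute Bound} bounds $\mathfrak b_t(X^j)$ only from \emph{one side}: from above when $X^j_t\ge 0$, from below when $X^j_t\le 0$. On the ball $\{\norm{X^j}_T\le B^j\}$ the integrand $\mathfrak b_s(X^j)$ may therefore be arbitrarily large in the opposite direction (it is cubic in the Fitzhugh--Nagumo example of Section~\ref{Section Application}), so $\norm{Y^j}_T$ is controlled only by $\norm{R^j}_T+T\,M(B^j)$ with $M$ possibly superlinear, and $Y^j$ escapes the ball. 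The one-sided structure is exactly what keeps \emph{solutions} bounded, but it does not make the Picard-type map preserve any ball. The standard cure---work on a short time interval and continue via the a~priori bound---does not help either: the admissible short time for coordinate $j$ shrinks with $B^j$, and since $\mathcal Z\in\bar\T^{\Z^d}_{\lambda_m}$ only gives $\sum_j\lambda^j_m\norm{R^j}_T^2<\infty$, the $B^j$ are unbounded and there is no uniform short time across all coordinates.

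The paper circumvents this by first approximating $\mathcal Z$ by $V_p$-periodic data $\mathcal Z(p)$; periodicity collapses~\eqref{eq:fundamentalmult 2} to a genuinely finite-dimensional ODE over $V_p$, where Cauchy--Peano (local existence) plus the a~priori bound yields a global solution $\Psi^m(\mathcal Z(p))$, and then Lemma~\ref{Lemma: Bound XZ 1} shows these approximants are Cauchy in the $\lambda_m$-weighted norm. Your parenthetical alternative, based on reading $(j+k)\modd V_m$ in~\eqref{eq:fundamentalmult 2} literally, would indeed decouple the system---but that formula is a typo for $j+k$, as is clear from the first definition~\eqref{eq: first Psi m definition} and from the proof of Lemma~\ref{Lemma: Bound XZ 1}; with $j+k$ the coupling is genuinely infinite-range in $j$ and some finite-dimensional approximation step is unavoidable.
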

\begin{proof}
Fix $\mathcal{Z} = (R^j,\omega^{j,k})_{j,k\in\Z^d} \in \bar{\T}^{\Z^{d}}_{\lambda_m}$. We assume that $\omega^{j,k} = 0$ if $j-k \notin V_m$, because if $\Psi^m(\mathcal{Z})$ were to exist then it would be independent of these values. We prove the existence in two steps. We start by finding a series of periodic approximations $\mathcal{Z}(p)$ of $\mathcal{Z}$. $\Psi^m(\mathcal{Z}(p))$ can be shown to exist through the Cauchy-Peano existence theorem, and then it will be shown that $\lbrace \Psi^m(\mathcal{Z}(p))\rbrace_{p\in\Z^+}$ is Cauchy, yielding the lemma. 

For $p\in \Z^+$, let $\mathcal{Z}(p) \in \bar{\T}^{\Z^{d}}$ have the property that, writing $\mathcal{Z}(p)^{j} = \big(R(p)^j,(\beta(p)^{j,k})_{k\in\Z^d}\big)$ $R(p)^j = R(p)^a$, $\beta(p)^{j,k} = \beta(p)^{a,k}$, where $a = j\modd V_p$. Very similarly to \cite[Lemma 6]{faugeras-maclaurin:16}, we can choose $\lbrace \mathcal{Z}(p)\rbrace_{p\in\Z^+}$ such that
\begin{align}
\lim_{p\to \infty}\sum_{j\in\Z^d}\lambda_m^j \norm{R(p)^j - R^j}_T = 0\label{eq assmp 1 Cauchy}\\
\lim_{p\to\infty}\sum_{j \in\Z^d, k\in V_m}\lambda_m^j d_{\mathcal{E}}(\beta(p)^{j,k},\beta^{j,k})^2 = 0.\label{eq assmp 3 Cauchy}
\end{align}

Now $\Psi^m(\mathcal{Z}(p))$ must exist for any $p\in\Z^+$. In brief, this is because for any $r\in \Z^d$, if the solution were to exist then $S^r\Psi^m(\mathcal{Z}(p)) = \Psi^m(\bar{S}^r\mathcal{Z}(p))$. We can therefore reduce the existence of $\Psi^m(\mathcal{Z}(p))$ to the existence of a solution to a finite-dimensional ordinary differential equation indexed over $V_p$, as in \cite[Lemma 7]{faugeras-maclaurin:16}. The solution $\Psi^m(\mathcal{Z}(p))$ exists for any $p\in\Z^+$ thanks to the generalised form of the Cauchy-Peano existence theorem in \cite[Lemma 8]{faugeras-maclaurin:16}. Now, by Lemma \ref{Lemma: Bound XZ 1},
\begin{multline*}
\sum_{j\in\Z^d}\lambda_m^j\norm{\Psi^m(\mathcal{Z}(q))^j - \Psi^m(\mathcal{Z}(p))^j}_T \leq \exp\big(TC+(1+\rho)TC_J|V_m| \big)\times \\ \bigg[  2\sum_{j\in\Z^d}\lambda_m^j\norm{R(p)^j-R(q)^j}_T +T(1+\sqrt{\rho})|V_m|^{\frac{1}{2}}\big(\sum_{j\in\Z^d}\lambda_m^j\norm{\Psi^m(\mathcal{Z}(p))^j}_T^2\big)^{\frac{1}{2}} \times \\ \bigg(\sum_{j \in\Z^d, k\in V_m}\lambda_m^j d_{\mathcal{E}}(\beta(p)^{j,k},\beta(q)^{j,k})^2\bigg)^{\frac{1}{2}}\bigg].
\end{multline*}
By Lemma \ref{Lemma: bound exp Z}, since $(A+B)^2 \leq 2A^2 + 2B^2$ and $\snorm{\cdot} \leq C_J$,
\begin{equation*}
\norm{\Psi^m(\mathcal{Z}(p))^j}_T^2 \leq 2\big[\exp\big(2T(C+2C_J |V_m|)\big) + 4\exp\big(2T(C+C_J |V_m|)\big)\norm{R(p)^j}_T^2\big].
\end{equation*}
This means that
\begin{align*}
\sum_{j\in\Z^d}\lambda_m^j \norm{\Psi^m(\mathcal{Z}(p))^j}_T^2 \leq 2\exp\big(2T\big(C+ 2C_J|V_m|\big)\big)\big( 1 + 4\sum_{j\in Z^d}\lambda_m^j \norm{R(p)^j}^2_T\big).
\end{align*}
Since
\begin{align*}
\lim_{p\to \infty}\sum_{j\in Z^d}\lambda_m^j \norm{R(p)^j}^2_T = \sum_{j\in Z^d}\lambda_m^j \norm{R^{j}}^2_T, 
\end{align*}
there must exist a uniform upper bound $L$ such that \newline$\sum_{j\in\Z^d}\lambda_m^j \norm{\Psi^m(\mathcal{Z}(p))^j}_T^2 \leq L$ for all $p\in\Z^+$. 
It is therefore a consequence of \eqref{eq assmp 1 Cauchy}-\eqref{eq assmp 3 Cauchy} that
\[
\lim_{q\to\infty}\sup_{p \geq q}\sum_{j\in\Z^d}\lambda_m^j\norm{\Psi^m(\mathcal{Z}(q))^j - \Psi^m(\mathcal{Z}(p))^j}_T = 0.
\]
Let
\[
\Psi^m(\mathcal{Z})^j = \lim_{p\to \infty}\Psi^m(\mathcal{Z}(p))^j.
\]
To show that $\Psi^m(\mathcal{Z})$ satisfies \eqref{eq:fundamentalmult 2}, we must verify that
\begin{align*}
\int_0^t \sum_{k\in V_m}\Lambda^k_s\big(\beta(p)^{j,k},\Psi^m(\mathcal{Z}(p))^j,\Psi^m(\mathcal{Z}(p))^{j+k}  \big) ds &\to \int_0^t \sum_{k\in V_m}\Lambda^k_s\big(\beta^{j,k},\Psi^m(\mathcal{Z})^j,\Psi^m(\mathcal{Z})^{j+k}\big)ds\\
\int_0^t \mathfrak{b}\big(\Psi_s^m(\mathcal{Z}(p))^j \big) ds &\to \int_0^t \mathfrak{b}\big(\Psi_s^m(\mathcal{Z})^j\big) ds.
\end{align*}
In fact the above identities follow from Assumptions \ref{Assumption Absolute Bound} and \ref{Assumption NonUniform} and the dominated convergence theorem. The uniqueness of $\Psi^m(\mathcal{Z})$ follows from Lemma \ref{Lemma: Bound XZ 1}.
\end{proof}

Recall that $\tilde{\Psi}^m(\mu) := \mu \circ (\Psi^m)^{-1}$. The following lemma notes that this is well-defined for $\mu \in \mathcal{A}_c$ ($\mathcal{A}_c$ is defined in \eqref{defn Ac}).
\begin{lemma}\label{Lem Prokhorov Technical One}
$\tilde{\Psi}^m:\mathcal{A}_c \to \mathcal{P}_S\big(\T^{\Z^d}\big)$ is well-defined and uniformly continuous. That is, for all $\epsilon$ there exists a $\delta$ such that for all $\mu,\nu \in \mathcal{A}_c$ satisfying $\bar{d}^{\mathcal{P}}(\mu,\nu) \leq \delta$,
$d^{\mathcal{P}}(\tilde{\Psi}^m(\mu),\tilde{\Psi}^m(\nu)) \leq \epsilon$. Furthermore, 
\begin{equation}\label{eq bound norm squared 1}
\Exp^{\mu}\left[ \norm{\Psi^m(\mathcal{Z})^j}_T^2 \right] < \infty,
\end{equation}
the above expectation being the same for all $j\in\Z^d$ since $\tilde{\Psi}^m(\mu)$ is stationary.
\end{lemma}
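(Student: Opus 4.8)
The plan is to establish three things: that $\tilde{\Psi}^m$ is well-defined and $\mathcal{P}_S\big(\T^{\Z^d}\big)$-valued on $\mathcal{A}_c$; the $L^2$ bound \eqref{eq bound norm squared 1}; and uniform continuity. The first two are short consequences of Lemmas \ref{lem: Psin Psi} and \ref{Lemma: bound exp Z} together with the defining inequalities of $\mathcal{A}_c$, while the uniform continuity is the real content. For well-definedness I would first show $\mu\big(\bar{\T}_{\lambda_m}^{\Z^d}\big)=1$ for every $\mu\in\mathcal{A}_c$: since $\mu$ is stationary and $\sum_j\lambda_m^j=1$ by \eqref{eq total lambda}, one has $\Exp^{\mu}\big[\sum_j\lambda_m^j\norm{R^j}_T^2\big]=\Exp^{\mu}\big[\norm{R^0}_T^2\big]\le c$, and, splitting the connection index at $V_{\mathfrak{m}_0}$ and using $\snorm{\cdot}\le C_J$ with the second inequality of \eqref{defn Ac}, $\Exp^{\mu}\big[\sum_j\lambda_m^j\sum_k\snorm{\omega^{j,k}}^2\big]=\Exp^{\mu}\big[\sum_k\snorm{\omega^{0,k}}^2\big]<\infty$; hence both series defining $\bar{\T}_{\lambda_m}^{\Z^d}$ are $\mu$-a.s.\ finite, so $\Psi^m$ (which is defined and measurable on $\bar{\T}_{\lambda_m}^{\Z^d}$ by Lemma \ref{lem: Psin Psi} and its proof) is $\mu$-a.e.\ defined and $\tilde{\Psi}^m(\mu):=\mu\circ(\Psi^m)^{-1}$ is a genuine probability measure. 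Its stationarity follows from the equivariance $\Psi^m\circ\bar{S}^r=S^r\circ\Psi^m$ noted in the proof of Lemma \ref{lem: Psin Psi}. For \eqref{eq bound norm squared 1} I would pass to the limit $p\to\infty$ in the pointwise estimate for $\norm{\Psi^m(\mathcal{Z}(p))^j}_T^2$ from the proof of Lemma \ref{lem: Psin Psi} (a consequence of Lemma \ref{Lemma: bound exp Z}), obtaining $\norm{\Psi^m(\mathcal{Z})^j}_T^2\le A_1(m)+A_2(m)\norm{R^j}_T^2$ with $A_i(m)$ depending only on $m,T,C,C_J$; taking $\Exp^{\mu}$ and using $\Exp^{\mu}[\norm{R^j}_T^2]\le c$ gives \eqref{eq bound norm squared 1} with a bound $L=L(m,c)$ that is uniform over $\mathcal{A}_c$ (this uniformity is used below).

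The uniform continuity is the main obstacle, precisely because $\Psi^m$ is not a local function of $\mathcal{Z}$; the argument is a coupling estimate closed up by the weights $\lambda_m^j$ and the inequality \eqref{eq: lambdakappa2} built into Lemma \ref{Lemma: Bound XZ 1}. Given $\epsilon>0$, use that $d^{\mathcal{P}}$ is a $2^{-j}$-weighted sum of the $d_j^{\mathcal{P}}$ to reduce to making $d_j^{\mathcal{P}}\big(\pi^{\mathcal{P}}_{V_j}\tilde{\Psi}^m(\mu),\pi^{\mathcal{P}}_{V_j}\tilde{\Psi}^m(\nu)\big)$ small for $j\le J$, where $\sum_{j>J}2^{-j}<\epsilon/2$. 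For $\mu,\nu\in\mathcal{A}_c$ with $\bar{d}^{\mathcal{P}}(\mu,\nu)$ sufficiently small one gets $\bar{d}_{q'}^{\mathcal{P}}$-closeness of the $\bar{V}_{q'}$-marginals for a chosen window $q'$; by the Strassen coupling characterisation of the L\'evy--Prokhorov metric and the gluing lemma for Polish spaces, lift this to a coupling $\Theta$ of $\mu$ and $\nu$ on $\bar{\T}^{\Z^d}\times\bar{\T}^{\Z^d}$ (coordinates $\mathcal{Z}=(R^j,\omega^{j,k})$, $\mathcal{Z}'=(R'^j,\omega'^{j,k})$) under which $\bar{d}_{q'}$ between the $\bar{V}_{q'}$-coordinates is $\le\delta'$ off an event of probability $\le\delta'$. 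Then I would bound $\Exp^{\Theta}\big[\sum_j\lambda_m^j\norm{R^j-R'^j}_T\big]$ and $\Exp^{\Theta}\big[\sum_{j,k}\lambda_m^j d_{\mathfrak{C}}(\omega^{j,k},\omega'^{j,k})^2\big]$ by some $\eta$: the tails (sites $j\notin V_{q'}$, and for $j\in V_{q'}$ the part with $k\notin V_{q'}$) are controlled uniformly over $\mathcal{A}_c$ using $\sum_{j\notin V_{q'}}\lambda_m^j\to0$, $\Exp^{\mu}[\norm{R^0}_T]\le\sqrt{c}$, $\snorm{\cdot}\le C_J$, and the decaying bounds on $\Exp^{\mu}\big[\big(\sum_{k\notin V_{q'}}\snorm{\omega^{0,k}}\big)^2\big]$ from \eqref{defn Ac}; the finite part ($j,k\in V_{q'}$) is $\le\delta'$ on the good event and, on the bad event, is controlled by Cauchy--Schwarz using $\Exp^{\Theta}[\norm{R^j-R'^j}_T^2]\le4c$. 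Choosing $q'$ large and then $\delta'$ small makes $\eta$ arbitrarily small.

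Finally, applying Lemma \ref{Lemma: Bound XZ 1} pathwise to $(\mathcal{Z},\mathcal{Z}')$, taking $\Exp^{\Theta}$, and using Cauchy--Schwarz with the uniform bound $\Exp^{\Theta}\big[\sum_j\lambda_m^j\norm{\Psi^m(\mathcal{Z})^j}_T^2\big]\le L$ yields $\Exp^{\Theta}\big[\sum_j\lambda_m^j\norm{\Psi^m(\mathcal{Z})^j-\Psi^m(\mathcal{Z}')^j}_T\big]\le C(m,T,C_J)\big(\eta+\sqrt{L\eta}\big)$. Since $\big(\Psi^m(\mathcal{Z}),\Psi^m(\mathcal{Z}')\big)$ is a coupling of $\tilde{\Psi}^m(\mu)$ and $\tilde{\Psi}^m(\nu)$, and $\norm{x}_{T,j}\le\big(\min_{i\in V_j}\lambda_m^i\big)^{-1}\sum_l\lambda_m^l\norm{x^l}_T$ with $\lambda_m^i>0$ by \eqref{eq total lambda}, Markov's inequality converts this into smallness of $d_j^{\mathcal{P}}\big(\pi^{\mathcal{P}}_{V_j}\tilde{\Psi}^m(\mu),\pi^{\mathcal{P}}_{V_j}\tilde{\Psi}^m(\nu)\big)$ for each $j\le J$, hence of $d^{\mathcal{P}}\big(\tilde{\Psi}^m(\mu),\tilde{\Psi}^m(\nu)\big)$, uniformly over $\mathcal{A}_c$. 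The delicate points are the uniform-over-$\mathcal{A}_c$ control of the $\lambda_m$-weighted tails and keeping the final constant dependent on $m$ only (not on $\mu,\nu$); everything else is bookkeeping around Lemmas \ref{lem: Psin Psi}, \ref{Lemma: Bound XZ 1} and \ref{Lemma: bound exp Z}.
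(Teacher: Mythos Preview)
Your proposal is correct, and the argument for well-definedness and for the bound \eqref{eq bound norm squared 1} is essentially identical to the paper's. For the uniform continuity, however, you take a genuinely different route from the paper.

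The paper works directly with the set-theoretic definition of the L\'evy--Prokhorov metric: it shows that for any $V_r$-cylinder set $B$, the quantity $\nu\big((\Psi^m)^{-1}B\big)-\mu\big((\Psi^m)^{-1}B^\delta\big)$ is small whenever $\bar{d}^{\mathcal{P}}(\mu,\nu)$ is small. To do this it introduces an auxiliary good set $\mathcal{H}^\epsilon_q\subset\bar{\T}_{\lambda_m}^{\Z^d}$ on which the weighted tails of the noise, the solution, and the connections are controlled (this is the content of the separate Lemmas \ref{Lemma Intermediate Technical} and \ref{Lemma bound mu 4}); on $\mathcal{H}^\epsilon_q$ one applies Lemma \ref{Lemma: Bound XZ 1} pathwise to show that $\bar{d}_q$-closeness of inputs forces $\norm{\cdot}_{T,r}$-closeness of outputs, and the complement of $\mathcal{H}^\epsilon_q$ has uniformly small measure over $\mathcal{A}_c$.

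Your approach instead invokes Strassen's coupling characterisation of $\bar{d}_q^{\mathcal{P}}$ and the gluing lemma to produce a single coupling $\Theta$ of the full measures $\mu,\nu$, then takes the expectation of the pathwise estimate from Lemma \ref{Lemma: Bound XZ 1} under $\Theta$ and closes it with Cauchy--Schwarz and the uniform $L^2$ bound. This is more streamlined: it absorbs the good-set/bad-set decomposition into the coupling construction and dispenses with the two auxiliary lemmas. The price is that you must import Strassen's theorem and the existence of regular conditional probabilities (both standard on Polish spaces, so harmless here), whereas the paper's argument is entirely self-contained. Both approaches rest on the same core ingredients---Lemma \ref{Lemma: Bound XZ 1}, the bound from Lemma \ref{Lemma: bound exp Z}, and the uniform tail control available on $\mathcal{A}_c$---so the difference is one of packaging rather than substance.
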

\begin{proof}
We first observe that if $\mu\in\mathcal{A}_c$, and $\mu$ is the law of $\lbrace \mathcal{Z}^j\rbrace_{j\in\Z^d}$, where $\mathcal{Z}^j = \big(R^j,(\omega^{j,k})_{k\in\Z^d}\big)$, then since 
\[
 \Exp^{\mu} \left[ \sum_{j\in\Z^d}\lambda^j_m\norm{R^j}^2_T\right] \leq c\sum_{j\in\Z^d}\lambda^j_m < \infty,
\]
$\sum_{j\in\Z^d}\lambda^j_m\norm{R^j}^2_T < \infty$, $\mu$-almost surely. Similarly, since $\Exp^{\mu} \bigg[ \sum_{k\in\Z^d}\snorm{\omega^{j,k}}^2 \bigg] < \infty$ (by the stationarity of $\mu$, this integral is independent of $j$), it must be that $ \sum_{j\in\Z^d,k\in\Z^d}\lambda^j_m\snorm{\omega^{j,k}}^2 < \infty$, $\mu$-almost surely. Thanks to Lemma \ref{lem: Psin Psi}, this means that $\Psi^m(\mathcal{Z})$ exists for $\mu$ almost-every $\mathcal{Z}$, and therefore that $\tilde{\Psi}^m(\mu)$ is well-defined. 

The bound in Lemma \ref{Lemma: bound exp Z} implies \eqref{eq bound norm squared 1}, since $\Exp\left[\norm{R^j}_T^2 \right] \leq c$.
 
Now it is an immediate consequence of the definition of the metric in \eqref{defn prohorov metric T} that for any $s\in \Z^+$,
\begin{multline}
\lim_{\gamma\to 0}\sup_{\mu,\nu \in \mathcal{A}_c, \bar{d}^{\mathcal{P}}(\mu,\nu) \leq \gamma}d^{\mathcal{P}}\bigg( \nu\circ(\Psi^m)^{-1},\mu\circ(\Psi^m)^{-1}\bigg) 
\leq 2^{-s-1} + \\
\sum_{r=1}^s\overline{\lim_{\gamma\to 0}}\sup_{\mu,\nu \in \mathcal{A}_c, \bar{d}^{\mathcal{P}}(\mu,\nu) \leq \gamma}d^{\mathcal{P}}_r\bigg( \pi^{\mathcal{P}}_{V_r}\big(\nu\circ(\Psi^m)^{-1}\big),\pi^{\mathcal{P}}_{V_r}\big(\mu\circ(\Psi^m)^{-1}\big)\bigg).
\end{multline}
It thus suffices for us to prove that for an arbitrary $r\in\Z^+$,
\begin{equation}\label{eq suffice 1}
\lim_{\gamma\to 0}\sup_{\mu,\nu \in \mathcal{A}_c, \bar{d}^{\mathcal{P}}(\mu,\nu) \leq \gamma}d^{\mathcal{P}}_r\bigg( \pi^{\mathcal{P}}_{V_r}\big(\nu\circ(\Psi^m)^{-1}\big),\pi^{\mathcal{P}}_{V_r}\big(\mu\circ(\Psi^m)^{-1}\big)\bigg) = 0.
\end{equation}
Let $\mathcal{B}^{V_r}$ be the set of all $V_r$ cylinder sets, that is every $B \in \mathcal{B}^{V_r} \subset \mathcal{B}\big(\T^{\Z^d}\big)$ is such that $B = (\pi^{V_r})^{-1}B_2$ for some $B_2 \in \mathcal{B}\big(\T^{V_r}\big)$. For $\delta > 0$, let $B^\delta \subset B$ be the $\delta$-interior relative to the norm on $\T^{V_r}$, i.e. such that
\begin{equation}
B^\delta = \left\lbrace X \in \T^{\Z^d} | \bigg\lbrace Y\in\T^{\Z^d} | \sum_{j\in V_r}\norm{X^j-Y^j}_T \leq \delta\bigg\rbrace \subseteq B \right\rbrace.
\end{equation}
Let $\bar{B} = (\Psi^m)^{-1}(B)$ and  $\bar{B}^\delta = (\Psi^m)^{-1}(B^\delta)$. We use Lemma \ref{Lemma Intermediate Technical} to establish \eqref{eq suffice 1}. This lemma implies that for any $\delta$ there exists a $\gamma$ such that
\begin{equation}
\sup_{\mu,\nu \in \mathcal{A}_c, \bar{d}^{\mathcal{P}}(\mu,\nu) \leq \gamma}\sup_{B \in \mathcal{B}^{V_r}}\big(\nu(\bar{B}) - \mu(\bar{B}^{\delta}\big)\big)\leq \delta.
\end{equation}
It follows from the above equation and the definition of the Prokhorov metric that 
\begin{equation}
\sup_{\mu,\nu \in \mathcal{A}_c, \bar{d}^{\mathcal{P}}(\mu,\nu) \leq \gamma}d^{\mathcal{P}}_r\bigg( \pi^{\mathcal{P}}_{V_r}\big(\nu\circ(\Psi^m)^{-1}\big),\pi^{\mathcal{P}}_{V_r}\big(\mu\circ(\Psi^m)^{-1}\big)\bigg) \leq \delta.
\end{equation}
Since $\delta$ is arbitrary, \eqref{eq suffice 1} is an immediate consequence of this, which completes the proof.
\end{proof}
The variables in the following lemma are defined in the proof of Lemma \ref{Lem Prokhorov Technical One}.
\begin{lemma}\label{Lemma Intermediate Technical}
For any $r\in\Z^+$ and $\delta > 0$,
\begin{equation}
\overline{\lim_{\gamma \to 0}}\sup_{\mu,\nu \in \mathcal{A}_c, \bar{d}^{\mathcal{P}}(\mu,\nu) \leq \gamma}\sup_{B \in \mathcal{B}^{V_r}}\big(\nu(\bar{B}) - \mu(\bar{B}^\delta\big))\leq \delta.
\end{equation}
\end{lemma}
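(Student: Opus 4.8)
The plan is to reduce the assertion to a comparison on a large but \emph{finite} coordinate block $\bar{V}_q$, where the closeness of $\pi^{\mathcal{P}}_{\bar{V}_q}\mu$ and $\pi^{\mathcal{P}}_{\bar{V}_q}\nu$ in the Lévy--Prokhorov metric $\bar{d}^{\mathcal{P}}_q$ is available. The obstruction to doing this naively is that $\Psi^m(\mathcal{Z})^j$ depends on all of $\mathcal{Z}$; but Lemma \ref{Lemma: Bound XZ 1} shows that, in the weighted norm $\sum_j\lambda^j_m\|\cdot\|_T$, this dependence on sites outside $V_q$ is controlled by the tails $\sum_{j\notin V_q}\lambda^j_m\|R^j\|_T$ and $\big(\sum_{j\notin V_q,\,k\in V_m}\lambda^j_m\snorm{\omega^{j,k}}^2\big)^{1/2}$, and the role of the moment constraints defining $\mathcal{A}_c$ in \eqref{defn Ac} — specifically the super-exponential factors $\exp(-(4+2\rho)TC_J|V_m|)|V_m|^{-2\rho-2}$ and $\exp(-(3+\rho)TC_J|V_m|)|V_m|^{-\rho-1}$ — is exactly to drive these tails to zero \emph{uniformly over} $\mathcal{A}_c$, because they overwhelm the $\exp\!\big((1+\rho)TC_J|V_m|\big)$ growth of the continuity constant of Lemma \ref{Lemma: Bound XZ 1}.

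So, fix $\delta>0$ and a small parameter $\eta>0$ (fixed at the end). For $\mathcal{Z}=(R^j,(\omega^{j,k})_k)_{j\in\Z^d}$ let $\mathcal{Z}^{(q)}$ agree with $\mathcal{Z}$ on $\bar{V}_q$ and equal $(0,\mathfrak{N})$ elsewhere; then $\mathcal{Z}^{(q)}\in\bar{\T}_{\lambda_m}^{\Z^d}$, so by Lemma \ref{lem: Psin Psi} the map $F_q(\mathcal{Z}):=\Psi^m(\mathcal{Z}^{(q)})$ is well defined and depends only on the restriction $\pi_{\bar{V}_q}\mathcal{Z}$. By Markov's inequality, stationarity, and the bound $\Exp^\mu[\|X^0\|_T^2]\leq c$ from \eqref{defn Ac}, the event $G_M:=\{\sum_j\lambda^j_m\|R^j\|_T^2\leq M\}$ has $\inf_{\mu\in\mathcal{A}_c}\mu(G_M)\geq 1-\eta$ once $M$ is large. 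On $G_M$, Lemma \ref{Lemma: bound exp Z} bounds $\sum_j\lambda^j_m\|\Psi^m(\mathcal{Z})^j\|_T^2$ by a constant depending only on $M$ and $m$, so Lemma \ref{Lemma: Bound XZ 1} applied to $(\mathcal{Z},\mathcal{Z}^{(q)})$ bounds $\mathbf{1}_{G_M}\sum_j\lambda^j_m\|\Psi^m(\mathcal{Z})^j-F_q(\mathcal{Z})^j\|_T$ by a constant times the two tails above. Their $\mu$-expectations tend to $0$ as $q\to\infty$, uniformly over $\mathcal{A}_c$: for the first, Cauchy--Schwarz, stationarity and $\sum_j\lambda^j_m=1$ give the bound $\sqrt{c}\,\big(\sum_{j\notin V_q}\lambda^j_m\big)^{1/2}$; for the second, $\snorm{\cdot}\leq C_J$ together with the third inequality of \eqref{defn Ac} with $m$ replaced by $q$. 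Dividing by $\min_{j\in V_r}\lambda^j_m>0$ to pass to the unweighted sum over $V_r$, and applying Markov once more, we obtain $q\in\Z^+$ and an event $E_q\subseteq G_M$ with $\inf_{\mu\in\mathcal{A}_c}\mu(E_q)\geq 1-2\eta$ on which $\sum_{j\in V_r}\|\Psi^m(\mathcal{Z})^j-F_q(\mathcal{Z})^j\|_T\leq\zeta$, for a small $\zeta>0$ fixed below.

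Now fix this $q$; any $\gamma<2^{-q}$ with $\bar{d}^{\mathcal{P}}(\mu,\nu)\leq\gamma$ forces $\bar{d}^{\mathcal{P}}_q(\pi^{\mathcal{P}}_{\bar{V}_q}\mu,\pi^{\mathcal{P}}_{\bar{V}_q}\nu)\leq\gamma$. On $E_q$, $\mathcal{Z}\in\bar{B}$ implies $\pi^{V_r}F_q(\mathcal{Z})$ lies within $\zeta$ of $\pi^{V_r}B$, so $\bar{B}\cap E_q\subseteq(\pi_{\bar{V}_q})^{-1}C_q$ where $C_q\subseteq\bar{\T}^{\bar{V}_q}$ is the set on which $\pi^{V_r}\circ F_q$ takes values within $\zeta$ of $\pi^{V_r}B$; hence, writing $C_q^{\gamma}$ for its open $\gamma$-neighbourhood, $\nu(\bar{B})\leq\pi^{\mathcal{P}}_{\bar{V}_q}\nu(C_q)+2\eta\leq\pi^{\mathcal{P}}_{\bar{V}_q}\mu(C_q^{\gamma})+\gamma+2\eta$. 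On the ball $\{\sum_{j\in V_q}\|R^j\|_T^2\leq M'\}$, whose complement has $\pi^{\mathcal{P}}_{\bar{V}_q}\mu$-measure $\leq|V_q|c/M'\leq\eta$ for $M'$ large, Lemma \ref{Lemma: Bound XZ 1} furnishes an explicit (Hölder-type) modulus of continuity for $\pi^{V_r}\circ F_q$, so for $\gamma$ small enough $C_q^{\gamma}$ lies, off an $\eta$-set, in the set on which $\pi^{V_r}\circ F_q$ takes values within $\zeta+L'\gamma$ of $\pi^{V_r}B$. Running the estimate of the previous paragraph in reverse under $\mu$ on $E_q$, the preimage of that set lies in $(\Psi^m)^{-1}$ of the $(2\zeta+L'\gamma)$-enlargement of $B$ in $\T^{V_r}$; choosing $\zeta,\gamma$ so that $2\zeta+L'\gamma<\delta$ gives $\nu(\bar{B})\leq\mu(\bar{B}^{\delta})+5\eta+\gamma$, uniformly over $B\in\mathcal{B}^{V_r}$ and over $\mu,\nu\in\mathcal{A}_c$ with $\bar{d}^{\mathcal{P}}(\mu,\nu)\leq\gamma$. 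Taking $\eta$ a suitable small multiple of $\delta$ and then $\gamma\to0$ gives the claimed bound.

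The only delicate step is the uniform localization of the second paragraph: the constant in Lemma \ref{Lemma: Bound XZ 1} carries the factor $\exp\!\big((1+\rho)TC_J|V_m|\big)$ and the \emph{random} factor $\big(\sum_j\lambda^j_m\|\Psi^m(\mathcal{Z})^j\|_T^2\big)^{1/2}$, both of which must be tamed before the tails can be seen to vanish — the former against the matching super-exponential weights built into the definition of $\mathcal{A}_c$ in \eqref{defn Ac}, the latter by truncating onto $G_M$ and using Lemma \ref{Lemma: bound exp Z}. The rest follows the scheme of \cite{faugeras-maclaurin:16}.
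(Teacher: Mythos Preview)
Your strategy is the paper's strategy: restrict to a ``good event'' on which the weighted tails outside a large cube $V_q$ are small, use Lemma~\ref{Lemma: Bound XZ 1} to convert $\bar d_q$-closeness of inputs into $\T^{V_r}$-closeness of $\Psi^m$-outputs, and then invoke the L\'evy--Prokhorov definition on $\bar{\T}^{\bar V_q}$. The paper packages the good event as $\mathcal{H}^\epsilon_q$ and compares $\Psi^m(\mathcal X)$ with $\Psi^m(\mathcal Z)$ directly; you introduce the intermediary $F_q=\Psi^m\circ(\text{truncation to }\bar V_q)$ and compare each of $\Psi^m(\mathcal X)$, $\Psi^m(\mathcal Z)$ with its $F_q$-counterpart. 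These are equivalent reorganisations of the same estimate.

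Two issues, one minor and one genuine. The minor one: your opening paragraph misidentifies which parts of $\mathcal A_c$ matter here. With $m$ fixed, the super-exponential conditions on $\sum_{k\notin V_m}\snorm{\omega^{0,k}}$ in \eqref{defn Ac} play no role in this lemma (they are used in Lemma~\ref{Lem Uniform Bound mu Ac}, where $m\to\infty$). What is needed is only $\Exp^\mu[\|X^0\|_T^2]\le c$ together with the deterministic bound $\snorm{\cdot}\le C_J$, which gives $\sum_{j\notin V_q,\,k\in V_m}\lambda^j_m\snorm{\omega^{j,k}}^2\le C_J^2|V_m|\sum_{j\notin V_q}\lambda^j_m\to0$; the ``third inequality of \eqref{defn Ac} with $m$ replaced by $q$'' controls a tail in $k$, not in $j$, and is not what you need.

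The genuine gap is in your last step. Your chain of inequalities correctly yields
\[
\nu(\bar B)\;\le\;\mu\big((\Psi^m)^{-1}(B^{[2\zeta+L'\gamma]})\big)+5\eta+\gamma,
\]
with $B^{[\cdot]}$ the \emph{enlargement} of $B$ in $\T^{V_r}$. But in the lemma $\bar B^\delta=(\Psi^m)^{-1}(B^\delta)$ with $B^\delta\subset B$ the $\delta$-\emph{interior}, and since $\mu$ of the enlargement dominates $\mu$ of the interior, the inference ``$2\zeta+L'\gamma<\delta$ gives $\nu(\bar B)\le\mu(\bar B^\delta)+5\eta+\gamma$'' does not follow. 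What you have actually proved is the enlargement version of the inequality --- and that is exactly what the application in Lemma~\ref{Lem Prokhorov Technical One} requires, via the Prokhorov characterisation $\nu'(A)\le\mu'(A^{[\delta]})+\delta$. (The paper's own step $\tilde B^{\eta,\epsilon}_q\subseteq\bar B^{\delta,\epsilon}_q$ has the same interior/enlargement orientation issue; in both cases the repair is simply to state and prove the enlargement version.)
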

\begin{proof}
For $\epsilon > 0$ and $q > m$, let $\mathcal{H}^\epsilon_q \subset \bar{\T}_{\lambda_m}^{\Z^d}$ be the subset of all $\mathcal{Z} = (R^j,\beta^{j,k})_{j,k\in\Z^d}$ such that, writing $Z = \Psi^m(\mathcal{Z})$,
\begin{align}
\sum_{j\notin V_q}\lambda^j_m\norm{R^j}_T &\leq \epsilon, \label{eq: defn H 1}\\
\sum_{j\in\Z^d}\lambda^j_m\norm{Z^j}^2_T &\leq \epsilon^{-1},\label{eq: defn H 2} \\
\sum_{j\notin V_q,k\in V_m}\lambda^j_m \snorm{\beta^{j,k}}^2&\leq \epsilon^2.\label{eq: defn H 3}
\end{align}
Writing $\bar{B}^\epsilon_q = \bar{B}\cap\mathcal{H}^\epsilon_q$ and $\bar{B}^{\delta,\epsilon}_q = \bar{B}^\delta\cap\mathcal{H}^\epsilon_q$, it may be observed that 
\begin{equation}\label{eq split mu nu}
\nu\big(\bar{B} \big) - \mu\big( \bar{B}^\delta\big) \leq \nu\big(\bar{B}^\epsilon_q \big) -\mu\big( \bar{B}^{\delta,\epsilon}_q\big) + \nu\big((\mathcal{H}^\epsilon_q)^c \big).
\end{equation}
Suppose that $\mathcal{X},\mathcal{Z} \in \mathcal{H}^\epsilon_q$, with  $\mathcal{X}:= \big(Q^j,\omega^{j,k}\big)_{j,k\in\Z^d}$, $\mathcal{Z} := \big(R^j,\beta^{j,k}\big)_{j,k\in\Z^d} \in \bar{\T}_\lambda^{\Z^d}$. Let $X = \Psi^m\big(\mathcal{X}\big)$ and $Z = \Psi^n\big(\mathcal{Z}\big)$. By Lemma \ref{Lemma: Bound XZ 1}, for any $r\in \Z^+$,
\begin{equation*}
\sum_{j\in V_r}\lambda^j_m\norm{X^j - Z^j}_T \leq A_m\bigg(H^q_1 + H^q_2\bigg) ,
\end{equation*}
where $A_m = T\exp\big(TC+T(1+\rho)C_J|V_m| \big)$,
\begin{multline*}
H^q_1 =2 \sum_{j\in V_q}\lambda^j_m\norm{Q^j - R^j}_T  \\ +(1+\sqrt{\rho})|V_m|^{\frac{1}{2}}\big(\sum_{j\in\Z^d}\lambda^j_m\norm{Z^j}^2_s\big)^{\frac{1}{2}} \bigg(\sum_{j \in V_q, k\in V_m}\lambda^j_m d_{\mathcal{E}}(\omega^{j,k},\beta^{j,k})^2\bigg)^{\frac{1}{2}}
\end{multline*}
and
\begin{align*}
H^q_2 &=  2\sum_{j\notin V_q}\lambda^j_m \norm{Q^j - R^j}_T\\ &+ (1+\sqrt{\rho})|V_m|^{\frac{1}{2}}\big(\sum_{j\in\Z^d}\lambda^j_m\norm{Z^j}^2_s\big)^{\frac{1}{2}} \bigg(\sum_{j \notin V_q, k\in V_m}\lambda^j_m d_{\mathcal{E}}(\omega^{j,k},\beta^{j,k})^2\bigg)^{\frac{1}{2}}\\
&\leq  2\sum_{j\notin V_q}\lambda^j_m\big[\norm{Q^j}_T +\norm{R^j}_T\big] \\ &+ (1+\sqrt{\rho})\sqrt{2}|V_m|^{\frac{1}{2}}\big(\sum_{j\in\Z^d}\lambda^j_m\norm{Z^j}^2_s\big)^{\frac{1}{2}} \bigg(\sum_{j \notin V_q, k\in V_m}\lambda^j_m \big(\snorm{\omega^{j,k}}^2+ \snorm{\beta^{j,k}}^2\big)\bigg)^{\frac{1}{2}},
\end{align*}
recalling that $\snorm{\cdot} = d_{\mathcal{E}}(\cdot,0)$. We have used the inequalities $\sqrt{F+G} \leq \sqrt{F} + \sqrt{G}$, and $(F+G)^2 \leq 2F^2 + 2G^2$. Using the definition of $\mathcal{H}^\epsilon_q$, and also the Cauchy-Schwarz Inequality, we find that
\begin{align*}
H^q_2 &\leq 4\epsilon +  \sqrt{2}(1+\sqrt{\rho})|V_m|^{\frac{1}{2}}\epsilon^{-\frac{1}{2}}\epsilon \\
&= 4\epsilon +  \sqrt{2}(1+\sqrt{\rho})|V_m|^{\frac{1}{2}}\epsilon^{\frac{1}{2}}.
\end{align*}
Take $\epsilon$ to be sufficiently small that $H^q_2 \leq \frac{\delta}{2A_m}$ and 
\begin{equation}\label{eq LM2}
4c\bigg(\frac{1}{2}\epsilon^{-1}\exp\big(-2T(C+2C_J|V_m|) \big)-1 \bigg)^{-1} \leq \frac{\delta}{4}.
\end{equation}
We find that
\begin{equation}\label{eq: bound r 1}
\sum_{j\in V_r}\lambda^j_m\norm{X^j - Z^j}_T \leq A_m \bar{d}_q(\mathcal{X},\mathcal{Z}) \bigg(2 + (1+\sqrt{\rho})|V_m|^{\frac{1}{2}}\epsilon^{-\frac{1}{2}}\bigg) 
+\frac{\delta}{2}.
\end{equation}
By Lemma \ref{Lemma bound mu 4}, and noting \eqref{eq LM2}, we may take $q$ to be sufficiently large that for all $\nu \in \mathcal{A}_c$,
 \begin{equation}\label{eq q large assumption}
 \nu\bigg( \big(\mathcal{H}^\epsilon_q \big)^c\bigg) \leq \frac{\delta}{2}. 
 \end{equation}
The lemma will follow once we have established the following claim.

\textit{Claim:}

We claim that for $\gamma$ sufficiently small,
\begin{equation}\label{eq: claim gamma}
\sup_{\mu,\nu \in \mathcal{A}_c, \bar{d}^{\mathcal{P}}(\mu,\nu)\leq \gamma}\sup_{B \in \mathcal{B}_m} \nu\big(\bar{B} \big) -\mu\big( \bar{B}^\delta \big) \leq 2^q\gamma + \frac{\delta}{2}.
\end{equation}
Thanks to \eqref{eq split mu nu} and \eqref{eq q large assumption}, it suffices to prove that, for $\gamma$ sufficiently small,
\[
\sup_{\mu,\nu \in \mathcal{A}_c, \bar{d}^{\mathcal{P}}(\mu,\nu)\leq \gamma}\sup_{B \in \mathcal{B}_m} \nu\big(\bar{B}^\epsilon_q \big) -\mu\big( \bar{B}^{\delta,\epsilon}_q\big) \leq \gamma 2^q.
\]
For $\eta > 0$, let $\tilde{B}^{\eta,\epsilon}_q \subset \bar{B}^\epsilon_q$ be the $\eta$ relative interior, that is
\begin{equation}
\tilde{B}^{\eta,\epsilon}_q = \left\lbrace \mathcal{X} \in \bar{B}^\epsilon_q | \big\lbrace \mathcal{W} \in \bar{\T}^{\Z^d} | \bar{d}_q\big(\mathcal{X},\mathcal{W} \big) \leq \eta\big\rbrace \subseteq \bar{B}^{\epsilon}_q \right\rbrace.
\end{equation}
It follows from \eqref{eq: bound r 1} that if
\begin{equation}\label{eq: gamma bound 3}
\eta \leq \frac{\delta}{2}\bigg(A_m\big(2+(1+\sqrt{\rho})|V_m|^{\frac{1}{2}}\epsilon^{-\frac{1}{2}}\big) \bigg)^{-1},
\end{equation}
then
\[
\tilde{B}^{\eta,\epsilon}_q \subseteq \bar{B}^{\delta,\epsilon}_q.
\]
This means that if \eqref{eq: gamma bound 3} is satisfied, then
\begin{align*}
\nu\big(\bar{B}^\epsilon_q \big) -\mu\big( \bar{B}^{\delta,\epsilon}_q\big) &\leq \nu\big(\bar{B}^\epsilon_q \big) -\mu\big( \tilde{B}^{\eta,\epsilon}_q\big)\\
&\leq \bar{d}^{\mathcal{P}}_q(\nu,\mu)\text{ if }\bar{d}^{\mathcal{P}}_q(\nu,\mu) \leq \eta,
\end{align*}
thanks to the definition of the Prokhorov Metric. Now it follows from the definition that if $\bar{d}^{\mathcal{P}}(\mu,\nu) \leq \gamma$, then $\bar{d}^{\mathcal{P}}_q(\nu,\mu) \leq 2^q\gamma$. We have thus established our claim in \eqref{eq: claim gamma}.

\end{proof}
\begin{lemma}\label{Lemma bound mu 4}
For all $\mu \in \mathcal{A}_c$, and $q$ sufficiently large,
\begin{equation}
\mu\big( \big(\mathcal{H}^\epsilon_q \big)^c\big) \leq  \sqrt{c}\epsilon^{-1} \sum_{j\notin V_q}\lambda^j_m + c L(m,\epsilon),
\end{equation}
where $L(m,\epsilon)$ is defined in \eqref{eq: L m epsilon} and $\mathcal{H}^\epsilon_q$ is defined in \eqref{eq: defn H 1} -\eqref{eq: defn H 3}.
\end{lemma}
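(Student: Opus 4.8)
The plan is to bound $(\mathcal{H}^\epsilon_q)^c$ by splitting it, via a union bound, into the three events on which one of the defining inequalities \eqref{eq: defn H 1}, \eqref{eq: defn H 2}, \eqref{eq: defn H 3} fails, and to estimate each of these separately. Throughout I write $\mu$ for the law of $\mathcal{Z} = (R^j,(\omega^{j,k})_{k\in\Z^d})_{j\in\Z^d}$ and $Z := \Psi^m(\mathcal{Z})$; every $\mu\in\mathcal{A}_c$ assigns full measure to $\bar{\T}^{\Z^d}_{\lambda_m}$ (this was shown in the proof of Lemma \ref{Lem Prokhorov Technical One}), so $Z$ is $\mu$-a.s.\ well-defined.

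First I would dispose of \eqref{eq: defn H 3}, whose failure event is in fact empty once $q$ is large. Since $\snorm{\omega^{j,k}} = d_{\mathfrak{C}}(\mathfrak{N},\omega^{j,k}) \leq C_J$ for all $j,k$ by Assumption \ref{Assumption NonUniform}, one has deterministically $\sum_{j\notin V_q,\,k\in V_m}\lambda^j_m\snorm{\omega^{j,k}}^2 \leq C_J^2|V_m|\sum_{j\notin V_q}\lambda^j_m$, and since $\sum_{j\in\Z^d}\lambda^j_m = 1$ by \eqref{eq total lambda} the tail $\sum_{j\notin V_q}\lambda^j_m$ vanishes as $q\to\infty$; hence for $q$ large this quantity is at most $\epsilon^2$, so \eqref{eq: defn H 3} holds identically. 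For \eqref{eq: defn H 1}, Markov's inequality, the stationarity of $\mu$ (so that $\Exp^\mu[\norm{R^j}_T]$ is independent of $j$), the Cauchy--Schwarz inequality and the moment bound $\Exp^\mu[\norm{R^0}_T^2]\leq c$ built into the definition of $\mathcal{A}_c$ give
\[
\mu\Big( \sum_{j\notin V_q}\lambda^j_m\norm{R^j}_T > \epsilon \Big) \leq \epsilon^{-1}\,\Exp^\mu\big[\norm{R^0}_T\big]\sum_{j\notin V_q}\lambda^j_m \leq \sqrt{c}\,\epsilon^{-1}\sum_{j\notin V_q}\lambda^j_m,
\]
which is the first term on the right-hand side.

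The crux is \eqref{eq: defn H 2}, since $\mathcal{A}_c$ offers no direct control of $\norm{Z^j}_T$; here I would reuse the a priori pathwise estimate of Lemma \ref{Lemma: bound exp Z}, exactly as in the proof of Lemma \ref{Lem Prokhorov Technical One}, namely $\sum_{j\in\Z^d}\lambda^j_m\norm{Z^j}_T^2 \leq 2\exp(2T(C+2C_J|V_m|))\big( 1 + 4\sum_{j\in\Z^d}\lambda^j_m\norm{R^j}_T^2 \big)$. Consequently, once $\epsilon$ is small enough that $\tfrac12\epsilon^{-1}\exp(-2T(C+2C_J|V_m|)) > 1$, the event $\{\sum_j\lambda^j_m\norm{Z^j}_T^2 > \epsilon^{-1}\}$ is contained in $\{\sum_j\lambda^j_m\norm{R^j}_T^2 > \tfrac14(\tfrac12\epsilon^{-1}\exp(-2T(C+2C_J|V_m|)) - 1)\}$, whose probability, by Markov and stationarity ($\Exp^\mu[\sum_j\lambda^j_m\norm{R^j}_T^2] = \Exp^\mu[\norm{R^0}_T^2] \leq c$), is at most $cL(m,\epsilon)$ with
\begin{equation}\label{eq: L m epsilon}
L(m,\epsilon) := 4\Big( \tfrac12\epsilon^{-1}\exp\big(-2T(C+2C_J|V_m|)\big) - 1 \Big)^{-1}.
\end{equation}
Adding the three estimates yields the lemma. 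The main obstacle is this last step: $\mathcal{A}_c$ constrains only the second moment of the noise $R$ and the exponentially-rescaled connection tails, never $Z = \Psi^m(\mathcal{Z})$ itself, so one must route through the crude bound of Lemma \ref{Lemma: bound exp Z}; this is precisely what forces the factor $\exp(2T(C+2C_J|V_m|))$ into $L(m,\epsilon)$, and hence dictates the choice of $\epsilon$ made in \eqref{eq LM2}.
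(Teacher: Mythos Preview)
Your proposal is correct and follows essentially the same argument as the paper: a union bound over the three defining conditions, with \eqref{eq: defn H 3} handled deterministically via $\snorm{\cdot}\leq C_J$ and the tail of $\sum_j\lambda^j_m$, \eqref{eq: defn H 1} via Markov plus stationarity and Cauchy--Schwarz, and \eqref{eq: defn H 2} by first converting to a bound on $\sum_j\lambda^j_m\norm{R^j}_T^2$ through Lemma \ref{Lemma: bound exp Z} and then applying Markov. The only cosmetic difference is the order in which you treat the three events.
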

\begin{proof}
We suppose that $\mu$ is the law of $\mathcal{Z} = (R^j,\beta^{j,k})_{j,k\in\Z^d}$ and write $Z = \Psi^m(\mathcal{Z})$. We obtain bounds on each of \eqref{eq: defn H 1} -\eqref{eq: defn H 3}. By Chebyshev's Inequality,
\begin{align*}
\mu\bigg(\sum_{j\notin V_q}\lambda^j_m\norm{R^j}_T \geq \epsilon\bigg) \leq & \epsilon^{-1}\Exp^{\mu}\big[ \sum_{j\notin V_q}\lambda_m^j\norm{R^j}_T\big] \\
\leq & \epsilon^{-1}\sum_{j\notin V_q}\lambda^j_m\Exp^{\mu}\big[ \norm{R^j}^2_T\big]^{\frac{1}{2}} \\
\leq & \sqrt{c}\epsilon^{-1}\sum_{j\notin V_q}\lambda^j_m,
\end{align*}
using the definition of $\mathcal{A}_c$ in \eqref{defn Ac}. By Lemma \ref{Lemma: bound exp Z}, the bound $\snorm{\cdot} \leq C_J$ of Assumption \ref{Assumption NonUniform}, and the fact that $(A+B)^2 \leq 2(A^2+B^2)$, for each $j\in\Z^d$,
\[
\norm{Z^j}_T^2 \leq 2\big(1+4\norm{R^j}_T^2\big)\exp\big(2T(C+2C_J|V_m|) \big),
\]
and therefore
\begin{align*}
\norm{R^j}_T^2 &\geq \frac{1}{4}\bigg(\frac{1}{2}\norm{Z^j}_T^2\exp\big(-2T(C+2C_J|V_m|) \big)-1 \bigg)\\
\sum_{j\in\Z^d}\lambda^j_m\norm{R^j}_T^2 &\geq \frac{1}{4}\bigg(\frac{1}{2}\sum_{j\in\Z^d}\lambda^j_m\norm{Z^j}_T^2\exp\big(-2T(C+2C_J|V_m|) \big)-1 \bigg),
\end{align*}
after summing over $j$. Hence, writing 
\begin{equation}\label{eq: L m epsilon}
L(m,\epsilon) = 4\bigg(\frac{1}{2}\epsilon^{-1}\exp\big(-2T(C+2C_J|V_m|) \big)-1 \bigg)^{-1},
\end{equation}
and noting that $\sum_{j\in \Z^d}\lambda^j_m = 1$,
\begin{align*}
\mu\bigg[\sum_{j\in\Z^d}&\lambda_m^j\norm{Z^j}^2_T \geq \epsilon^{-1}\bigg] \\ &\leq \mu \bigg[ \sum_{j\in\Z^d}\lambda^j_m \norm{R^j}_T^2  \geq  \frac{1}{4}\bigg(\frac{1}{2}\epsilon^{-1}\exp\big(-2T(C+2C_J|V_m|) \big)-1 \bigg)\bigg] \\
&\leq L(m,\epsilon)\Exp^{\mu}\bigg[ \sum_{j\in\Z^d}\lambda^j_m\norm{R^j}^2_T\bigg]  \\
&\leq cL(m,\epsilon),
\end{align*}
where we have made use of Chebyshev's Inequality. Finally, observe that thanks to Assumption \ref{Assumption NonUniform},
\begin{align*}
\sum_{j\notin V_q}\lambda^j_m\sum_{k\in V_m}\snorm{\beta^{j,k}}^2 \leq \sum_{j\notin V_q}\lambda^j_m |V_m| C_J^2.
\end{align*}
Since $\sum_{j \in \Z^d}\lambda^j_m = 1$, for $q$ sufficiently large the above must be less than or equal to $\epsilon^2$, so that \eqref{eq: defn H 3} is satisfied.
\end{proof}\begin{lemma}\label{Lemma: Bound XZ 1}
Suppose that $\mathcal{X} = (Q^j,\omega^{j,k})_{j,k\in\Z^d} \in \bar{\T}^{\Z^d}_{\lambda_m}$ and $\mathcal{Z} = (R^j,\beta^{j,k})_{j,k\in\Z^d} \in \bar{\T}^{\Z^d}_{\lambda_m}\cap\bar{\T}^{\Z^d}_{\lambda_n}$. Suppose that for $n\geq m$ there exist solutions $X = \Psi^m(\mathcal{X})$ and $Z = \Psi^n(\mathcal{Z})$ to \eqref{eq:fundamentalmult 2}. Then
\begin{multline*}
\sum_{j\in\Z^d}\lambda_m^j\norm{X^j - Z^j}_T \leq \exp\big( TC+T(1+\rho)C_J |V_m| \big)\\ \times\bigg[(1+\sqrt{\rho})T|V_m|^{\frac{1}{2}}\big(\sum_{j\in\Z^d}\lambda_m^j\norm{Z^j}^2_T\big)^{\frac{1}{2}} \big(\sum_{j \in \Z^d,k\in V_m}\lambda_m^j d_{\mathcal{E}}(\omega^{j,k},\beta^{j,k})^2\big)^{\frac{1}{2}} \\+ 2 \sum_{j\in \Z^d}\lambda_m^j\norm{Q^j - R^j}_T 
+ T\sum_{j\in \Z^d,k\in V_n - V_m}\lambda_m^j\snorm{\beta^{j,k}}\big(\norm{Z^j}_T + 1\big)\bigg]. 
\end{multline*}
\end{lemma}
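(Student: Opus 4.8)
The plan is to prove the estimate by a Gronwall argument on the $\lambda_m$-weighted total difference $u(t):=\sum_{j\in\Z^d}\lambda^j_m\norm{X^j-Z^j}_t$, handling the non-Lipschitz drift $\mathfrak{b}$ through the one-sided bounds of Assumption~\ref{Assumption Absolute Bound}. Write $D^j:=X^j-Z^j$ and $N^j:=Q^j-R^j$. Subtracting the integral equations \eqref{eq:fundamentalmult 2} defining $X=\Psi^m(\mathcal{X})$ and $Z=\Psi^n(\mathcal{Z})$ and splitting $\sum_{k\in V_n}=\sum_{k\in V_m}+\sum_{k\in V_n\setminus V_m}$ in the interaction term for $Z$, the difference $D^j_t$ is written as the integral of the drift difference $\mathfrak{b}_s(X^j)-\mathfrak{b}_s(Z^j)$, plus the integral of the interaction differences (over $V_m$ jointly, and over $V_n\setminus V_m$ on the $Z$ side only), plus $N^j_t$.

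To bound $\norm{D^j}_t$ for fixed $j$ I would take $t^*\le t$ realising the supremum; by symmetry assume $D^j_{t^*}\ge 0$, and let $t_0$ be the last zero of $D^j$ in $[0,t^*]$, or $t_0=0$ if there is none. On $[t_0,t^*]$ one has $X^j_s\ge Z^j_s$, so $\mathfrak{b}_s(X^j)-\mathfrak{b}_s(Z^j)\le C\norm{D^j}_s$ by Assumption~\ref{Assumption Absolute Bound}; writing $D^j_{t^*}=D^j_{t_0}+\int_{t_0}^{t^*}(\mathfrak{b}_s(X^j)-\mathfrak{b}_s(Z^j))\,ds+\int_{t_0}^{t^*}(\text{interaction diff.})\,ds+(N^j_{t^*}-N^j_{t_0})$, the drift integral is $\le C\int_0^t\norm{D^j}_s\,ds$, while the boundary terms $D^j_{t_0}+(N^j_{t^*}-N^j_{t_0})$ are at most $2\norm{Q^j-R^j}_T$ (they equal $N^j_{t^*}-N^j_{t_0}$ when $t_0>0$, since then $D^j_{t_0}=0$, and equal $N^j_{t^*}$ when $t_0=0$, since then $D^j_{t_0}=N^j_0$ cancels $-N^j_{t_0}$) --- this is the source of the factor $2$. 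For the interaction differences with $k\in V_m$ I would telescope $\Lambda^k_s(\omega^{j,k},X^j,X^{j+k})-\Lambda^k_s(\beta^{j,k},Z^j,Z^{j+k})$ through the intermediates $\Lambda^k_s(\omega^{j,k},Z^j,X^{j+k})$ and $\Lambda^k_s(\omega^{j,k},Z^j,Z^{j+k})$ and apply the three Lipschitz bounds of Assumption~\ref{Assumption NonUniform} together with $\snorm{\cdot}\le C_J$, obtaining
\[
\big|\Lambda^k_s(\omega^{j,k},X^j,X^{j+k})-\Lambda^k_s(\beta^{j,k},Z^j,Z^{j+k})\big|\le C_J\big(\norm{D^j}_s+\norm{D^{j+k}}_s\big)+\big(\norm{Z^j}_s+\norm{Z^{j+k}}_s\big)d_{\mathfrak{C}}(\omega^{j,k},\beta^{j,k}).
\]
For $k\in V_n\setminus V_m$ the term $\Lambda^k_s(\beta^{j,k},Z^j,Z^{j+k})$ appears on the $Z$ side only and is $\le\snorm{\beta^{j,k}}(1+\norm{Z^j}_s)$ by \eqref{eq Lambda abs bound}. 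Collecting these, and bounding $\norm{\cdot}_s\le\norm{\cdot}_T$ on the terms that are not to be fed into Gronwall, gives a pointwise-in-$j$ inequality of the required shape.

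Next I would multiply by $\lambda^j_m$ and sum over $j\in\Z^d$. The sums $\sum_j\lambda^j_m\sum_{k\in V_m}\norm{D^{j+k}}_s$ and $\sum_j\lambda^j_m\sum_{k\in V_m}\norm{Z^{j+k}}_T^2$ are handled by \eqref{eq: lambdakappa2}, which bounds them by $\rho|V_m|\sum_j\lambda^j_m\norm{D^j}_s$ and $\rho|V_m|\sum_j\lambda^j_m\norm{Z^j}_T^2$; combined with the diagonal contributions (a factor $|V_m|$ from summing a constant over $k\in V_m$) this produces the Gronwall coefficient $C+(1+\rho)C_J|V_m|$ on the $\int_0^t$ term. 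For the connection-difference term I would use Cauchy--Schwarz over the pairs $(j,k)$ with weights $\lambda^j_m$ and then Minkowski's inequality in $\ell^2(\lambda^j_m)$, so that $\big(\sum_{j,k}\lambda^j_m(\norm{Z^j}_T+\norm{Z^{j+k}}_T)^2\big)^{1/2}\le\big(\sum_{j,k}\lambda^j_m\norm{Z^j}_T^2\big)^{1/2}+\big(\sum_{j,k}\lambda^j_m\norm{Z^{j+k}}_T^2\big)^{1/2}\le(1+\sqrt{\rho})|V_m|^{1/2}\big(\sum_j\lambda^j_m\norm{Z^j}_T^2\big)^{1/2}$ after a further application of \eqref{eq: lambdakappa2}, which is exactly the constant in the statement. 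All sums are finite since $\mathcal{X},\mathcal{Z}\in\bar{\T}^{\Z^d}_{\lambda_m}$ and Lemma~\ref{Lemma: bound exp Z} bounds $\sum_j\lambda^j_m\norm{Z^j}_T^2$ (and $\sum_j\lambda^j_m\norm{D^j}_T$) a priori in terms of $\sum_j\lambda^j_m\norm{R^j}_T^2<\infty$.

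Putting this together, $u$ satisfies $u(t)\le a+(C+(1+\rho)C_J|V_m|)\int_0^t u(s)\,ds$ with $a$ the sum of the three source terms ($2\sum_j\lambda^j_m\norm{Q^j-R^j}_T$, the connection-difference term, and the $V_n\setminus V_m$ tail term), so Gronwall's inequality gives $u(T)\le a\exp\big(T(C+(1+\rho)C_J|V_m|)\big)$, which is the claimed bound. The step requiring the most care is the treatment of the non-Lipschitz drift: one must set up the last-zero decomposition correctly so that the one-sided estimates of Assumption~\ref{Assumption Absolute Bound} are invoked only on an interval where $X^j\ge Z^j$ (this is also what forces the factor $2$ on the noise term), while the remainder is careful bookkeeping of the $\lambda_m$-weighted sums via \eqref{eq: lambdakappa2} and Minkowski's inequality so as to land on the exact constants $(1+\sqrt{\rho})$ and $(1+\rho)$ rather than weaker ones.
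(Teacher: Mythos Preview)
Your proposal is correct and follows essentially the same route as the paper: the last-zero decomposition to invoke the one-sided drift bounds of Assumption~\ref{Assumption Absolute Bound} (producing the factor $2$ on the noise term), the telescoping of the interaction via Assumption~\ref{Assumption NonUniform}, the $\lambda_m$-weighted summation combined with \eqref{eq: lambdakappa2}, and the final Gronwall step are all identical in spirit and detail to the paper's proof. The only cosmetic difference is in the connection-difference term, where you apply Cauchy--Schwarz once over $(j,k)$ and then Minkowski, while the paper first splits $\norm{Z^j}_s$ from $\norm{Z^{j+k}}_s$ and applies Cauchy--Schwarz/Jensen separately; both arrive at the same constant $(1+\sqrt{\rho})|V_m|^{1/2}$.
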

\begin{proof}
Observe that
\begin{align*}
X^j_t - Z^j_t = \int_0^t\bigg( \mathfrak{b}(X_s^j) - \mathfrak{b}(Z_s^j)-\sum_{k\in V_n - V_m}\Lambda^k_s(\beta^{j,k},Z^j,Z^{j+k}) \\ + \sum_{k\in V_m}\Lambda^k_s(\omega^{j,k},X^j,X^{j+k})-\Lambda^k_s(\beta^{j,k},Z^j,Z^{j+k})\bigg)ds + Q^j_t - R^j_t \\
= \int_0^t\bigg( \mathfrak{b}(X_s^j) - \mathfrak{b}(Z_s^j)-\sum_{k\in V_n - V_m}\Lambda^k_s(\beta^{j,k},Z^j,Z^{j+k}) \\ + \sum_{k\in V_m}\bigg[\Lambda^k_s(\omega^{j,k},X^j,X^{j+k})-\Lambda^k_s(\omega^{j,k},Z^j,X^{j+k})\\+\Lambda^k_s(\omega^{j,k},Z^j,X^{j+k})-\Lambda^k_s(\omega^{j,k},Z^j,Z^{j+k})\\+\Lambda^k_s(\omega^{j,k},Z^j,Z^{j+k})-\Lambda^k_s(\beta^{j,k},Z^j,Z^{j+k})\bigg]\bigg)ds + Q^j_t - R^j_t.
\end{align*}
Lets first assume that $X^j_t \geq Z^j_t$. In this case, define $[\tau,\gamma]$ to be such that $X^j_t \geq Z^j_t$ for all $t\in [\tau,\gamma]$ and $\tau$ is as small as possible.
Applying the inequalities of Assumptions \ref{Assumption Absolute Bound} and \ref{Assumption NonUniform}, we find that for $1\leq p \leq m$, and $s\in [\tau,\gamma]$,
\begin{align*}
\mathfrak{b}_p(X_s^j) - \mathfrak{b}_p(Z_s^j) \leq C | X_s^j - Z_s^j |.
\end{align*}
This means that
\begin{align*}
\sup_{1\leq p \leq m}\big| X^{p,j}_t - Z^{p,j}_t\big| \leq \sup_{1\leq p \leq m}\big| X^{p,j}_\tau - Z^{p,j}_\tau \big| + \int_\tau^t\bigg(C\norm{X^j - Z^j}_s\\+ \sum_{k\in V_n - V_m}\snorm{\beta^{j,k}}\big(\norm{Z^j}_s + 1\big) 
 C_J |V_m|\norm{X^j - Z^j}_s + C_J\sum_{k\in V_m}\norm{X^{j+k}-Z^{j+k}}_s \\+ \sum_{k\in V_m}d_{\mathcal{E}}(\omega^{j,k},\beta^{j,k})\big(\norm{Z^j}_s + \norm{Z^{j+k}}_s\big)  \bigg)ds + Q^j_t - R^j_t - Q^j_\tau + R^j_\tau.
\end{align*}
We take $\tau$ to be such that, either $\tau=0$, or $X^j_t < Z^j_t$ as $t\to \tau^-$. Now since if $\tau = 0$, then $X^j_\tau -Z^j_\tau = Q^j_0 - R^j_0$, whichever of these cases holds we find that for all $t\in [\tau,\gamma]$,
\begin{align*}
\big|X^j - Z^j\big|_t \leq  2\norm{Q^j - R^j}_T+ \int_\tau^t\bigg(C\norm{X^j - Z^j}_s + C_J |V_m|\norm{X^j - Z^j}_s \\+ C_J\sum_{k\in V_m}\norm{X^{j+k}-Z^{j+k}}_s + \sum_{k\in V_m}d_{\mathcal{E}}(\omega^{j,k},\beta^{j,k})\big(\norm{Z^j}_s + \norm{Z^{j+k}}_s\big) \\ 
+ \sum_{k\in V_n - V_m}\snorm{\beta^{j,k}}\big(\norm{Z^j}_s + 1\big)\bigg)ds\\ 
\leq  2\norm{Q^j - R^j}_T+ \int_0^t \bigg((C+ C_J |V_m|)\norm{X^j - Z^j}_s + C_J\sum_{k\in V_m}\norm{X^{j+k}-Z^{j+k}}_s \\+ \norm{Z^j}_s \sum_{k\in V_m}d_{\mathcal{E}}(\omega^{j,k},\beta^{j,k}) +  \big(\sum_{k\in V_m}\norm{Z^{j+k}}^2_s\big)^{\frac{1}{2}}\big(\sum_{k\in V_m}d_{\mathcal{E}}(\omega^{j,k},\beta^{j,k})^2\big)^{\frac{1}{2}}
\\+ \sum_{k\in V_n - V_m}\snorm{\beta^{j,k}}\big(\norm{Z^j}_s + 1\big)\bigg)ds.
\end{align*}
We could demonstrate the same inequality if $[\tau,\gamma]$ were such that $X^j_t \leq Z^j_t$ for all $t\in [\tau,\gamma]$.

We thus find that, making use of the Cauchy-Schwarz inequality,
\begin{align*}
&\sum_{j\in\Z^d}\lambda_m^j\norm{X^j - Z^j}_t \leq  2\sum_{j\in \Z^d}\lambda_m^j\norm{Q^j - R^j}_T\\&+ \int_0^t \bigg[ (C+ C_J |V_m|)\sum_{j\in\Z^d}\lambda_m^j\norm{X^j - Z^j}_s + C_J\sum_{j\in\Z^d,k\in V_m}\lambda_m^j\norm{X^{j+k}-Z^{j+k}}_s\\ &+ \big(\sum_{j\in\Z^d}\lambda_m^j\norm{Z^j}^2_s\big)^{\frac{1}{2}} \big(\sum_{j\in\Z^d}\lambda_m^j \big(\sum_{k\in V_m}d_{\mathcal{E}}(\omega^{j,k},\beta^{j,k})\big)^2\big)^{\frac{1}{2}} \\&+  \big(\sum_{j\in\Z^d,k\in V_m}\lambda_m^j\norm{Z^{j+k}}^2_s\big)^{\frac{1}{2}}\big(\sum_{j\in\Z^d,k\in V_m}\lambda_m^j d_{\mathcal{E}}(\omega^{j,k},\beta^{j,k})^2\big)^{\frac{1}{2}}\\
&+ \sum_{j\in \Z^d,k\in V_n - V_m}\lambda_m^j\snorm{\beta^{j,k}}\big(\norm{Z^j}_s +1\big)\bigg)ds\\ 
&\leq  2\sum_{j\in \Z^d}\lambda_m^j\norm{Q^j - R^j}_T\\&+ \int_0^t \bigg[ (C+ C_J |V_m|)\sum_{j\in\Z^d}\lambda_m^j\norm{X^j - Z^j}_s + \rho C_J|V_m|\sum_{j\in\Z^d}\lambda_m^j\norm{X^{j}-Z^{j}}_s\\ &+ |V_m|^{\frac{1}{2}}\big(\sum_{j\in\Z^d}\lambda_m^j\norm{Z^j}^2_s\big)^{\frac{1}{2}} \big(\sum_{j\in \Z^d,k\in V_m}\lambda_m^j d_{\mathcal{E}}(\omega^{j,k},\beta^{j,k})^2\big)^{\frac{1}{2}} \\&+  \sqrt{\rho}|V_m|^{\frac{1}{2}}\big(\sum_{j\in\Z^d}\lambda_m^j\norm{Z^j}^2_s\big)^{\frac{1}{2}}\big(\sum_{j\in\Z^d,k\in V_m}\lambda_m^j d_{\mathcal{E}}(\omega^{j,k},\beta^{j,k})^2\big)^{\frac{1}{2}} \\
&+ \sum_{j\in \Z^d,k\in V_n - V_m}\lambda_m^j\snorm{\beta^{j,k}}\big(\norm{Z^j}_s +1\big)\bigg)ds,
\end{align*}
by \eqref{eq: lambdakappa2}, and using Jensen's Inequality to obtain the bound \newline$\big(\sum_{k\in V_m}d_{\mathcal{E}}(\omega^{j,k},\beta^{j,k})\big)^2\leq |V_m|\sum_{k\in V_m}d_{\mathcal{E}}(\omega^{j,k},\beta^{j,k})^2$. Hence by Gronwall's Inequality,
\begin{multline*}
\sum_{j\in\Z^d}\lambda_m^j\norm{X^j - Z^j}_T \leq \exp\big( TC+T(1+\rho)C_J |V_m| \big)\\ \times\bigg[\big(1+\sqrt{\rho}\big)T|V_m|^{\frac{1}{2}}\big(\sum_{j\in\Z^d}\lambda_m^j\norm{Z^j}^2_T\big)^{\frac{1}{2}} \big(\sum_{j \in \Z^d,k\in V_m}\lambda_m^j d_{\mathcal{E}}(\omega^{j,k},\beta^{j,k})^2\big)^{\frac{1}{2}} \\+ 2 \sum_{j\in \Z^d}\lambda_m^j\norm{Q^j - R^j}_T 
+ T\sum_{j\in \Z^d,k\in V_n - V_m}\lambda_m^j\snorm{\beta^{j,k}}\big(\norm{Z^j}_T + 1\big)\bigg]. 
\end{multline*}
\end{proof}
\begin{lemma}\label{Lemma: bound exp Z}
Suppose that $\mathcal{Z} = \big(R^j,\omega^{j,k}\big)_{j,k\in\Z^d} \in \bar{\T}^{\Z^d}$ and $\Psi^m(\mathcal{Z})$ exists. Then
\begin{multline*}
\norm{\Psi^m(\mathcal{Z})^j}_T \leq \exp\bigg(T\big(C + 2\sum_{k\in V_m}\snorm{\omega^{j,k}} \big) \bigg) \\+ 2\exp\bigg(T\big(C + \sum_{k\in V_m}\snorm{\omega^{j,k}} \big) \bigg)\norm{R^j}_T.
\end{multline*}
\end{lemma}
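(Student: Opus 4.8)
The plan is to run a Gronwall estimate directly on a single component $X^j := \Psi^m(\mathcal{Z})^j$ of the solution, treating the remaining components as an inhomogeneity that is absorbed into the absolute bound on $\Lambda$. First I would write out the defining relation \eqref{eq:fundamentalmult 2},
\[
X^j_t = R^j_t + \int_0^t\Bigl(\mathfrak{b}_s(X^j) + \sum_{k\in V_m}\Lambda^k_s\bigl(\omega^{j,k},X^j,X^{(j+k)\modd V_m}\bigr)\Bigr)\,ds ,
\]
and invoke \eqref{eq Lambda abs bound}, which gives $\bigl|\Lambda^k_s(\omega^{j,k},X^j,X^{(j+k)\modd V_m})\bigr| \le \snorm{\omega^{j,k}}\bigl(1+\norm{X^j}_s\bigr)$. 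The crucial point is that this bound involves only $X^j$ and not the third argument $X^{(j+k)\modd V_m}$, so the estimate for $\norm{X^j}_T$ decouples from the rest of the network and depends only on $\norm{R^j}_T$ and $\textstyle\sum_{k\in V_m}\snorm{\omega^{j,k}}$.

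Second, to handle the internal dynamics $\mathfrak{b}_s$, which by Assumption \ref{Assumption Absolute Bound} satisfies only one-sided growth bounds, I would argue according to the sign of $X^j_t$. Fix $t\in[0,T]$ and assume $X^j_t>0$ (the case $X^j_t<0$ is symmetric via the lower bounds on $\mathfrak{b}_s$, and $X^j_t=0$ is trivial). Put $\tau:=\sup\{s\in[0,t]: X^j_s\le 0\}$, with $\tau:=0$ if this set is empty. By continuity one has $X^j_s\ge 0$ on $[\tau,t]$, hence $\mathfrak{b}_s(X^j)\le C\norm{X^j}_s$ there; moreover either $\tau=0$, in which case the initial value $X^j_\tau=R^j_0$ cancels against $-R^j_\tau$ in the increment $X^j_t-X^j_\tau=\int_\tau^t(\cdots)\,ds+R^j_t-R^j_\tau$, or $X^j_\tau=0$. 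Bounding the $\Lambda$-terms globally on $[0,t]$, the $\mathfrak{b}$-term on $[\tau,t]$, and using $|R^j_t-R^j_\tau|\le 2\norm{R^j}_T$, I obtain
\[
|X^j_t| \le 2\norm{R^j}_T + T\sum_{k\in V_m}\snorm{\omega^{j,k}} + \int_0^t\Bigl(C+\sum_{k\in V_m}\snorm{\omega^{j,k}}\Bigr)\norm{X^j}_s\,ds .
\]
Since the right-hand side is nondecreasing in $t$, the same inequality holds with $|X^j_t|$ replaced by $\norm{X^j}_t$.

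Finally, Gronwall's inequality yields
\[
\norm{X^j}_T \le \Bigl(2\norm{R^j}_T + T\textstyle\sum_{k\in V_m}\snorm{\omega^{j,k}}\Bigr)\exp\Bigl(T\bigl(C+\textstyle\sum_{k\in V_m}\snorm{\omega^{j,k}}\bigr)\Bigr),
\]
and I would then apply the elementary inequality $x\le e^{x}$ with $x=T\sum_{k\in V_m}\snorm{\omega^{j,k}}$ to replace the prefactor $T\sum_{k\in V_m}\snorm{\omega^{j,k}}$ by one further exponential, which reorganises the right-hand side into exactly
\[
\exp\Bigl(T\bigl(C+2\textstyle\sum_{k\in V_m}\snorm{\omega^{j,k}}\bigr)\Bigr) + 2\exp\Bigl(T\bigl(C+\textstyle\sum_{k\in V_m}\snorm{\omega^{j,k}}\bigr)\Bigr)\norm{R^j}_T .
\]
The only genuinely delicate step is the sign/stopping-time argument accommodating the non-Lipschitz $\mathfrak{b}$ and the careful bookkeeping at the endpoint $\tau$; everything else is routine and closely parallels the corresponding a priori estimates in \cite{faugeras-maclaurin:16}.
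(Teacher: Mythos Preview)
Your proposal is correct and follows essentially the same route as the paper: a sign/stopping-time argument to invoke the one-sided bound on $\mathfrak{b}_s$, the absolute bound \eqref{eq Lambda abs bound} on $\Lambda$ to decouple the $j$th component, and then Gronwall to reach $\norm{X^j}_T \leq \bigl(2\norm{R^j}_T + T\sum_{k\in V_m}\snorm{\omega^{j,k}}\bigr)\exp\bigl(T(C+\sum_{k\in V_m}\snorm{\omega^{j,k}})\bigr)$, which is exactly the intermediate estimate the paper obtains. You make explicit the final step $x\le e^x$ that the paper leaves as ``the lemma follows directly from this.''
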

\begin{proof}
Suppose that $[\tau,\gamma] \subseteq [0,T]$ is such that $\Psi^m(\mathcal{Z})^j_t \geq 0$ for all $t\in [\tau,\gamma]$. Then, making use of Assumption \ref{Assumption Absolute Bound}, for $t\in [\tau,\gamma]$,
\begin{multline*}
|\Psi^m(\mathcal{Z})^j_t| \leq |\Psi^m(\mathcal{Z})_\tau^j|+ R^j_t - R^j_\tau \\  + \int_\tau^t \bigg(C\norm{\Psi^m(\mathcal{Z})^j}_s + \sum_{k\in V_m}\snorm{\omega^{j,k}}\big(1+\norm{\Psi^m(\mathcal{Z})^j}_s\big)\bigg)ds .
\end{multline*}
We take $\tau$ to be such that, either $\tau = 0$ (in which case $\Psi^m(\mathcal{Z})^j_0 = R^j_0$) or $\Psi^m(\mathcal{Z})^j_\tau = 0$. Whichever is the case, we find that
\begin{multline*}
|\Psi^m(\mathcal{Z})_t^j| \leq \int_0^t \bigg(C\norm{\Psi^m(\mathcal{Z})^j}_s + \sum_{k\in V_m}\snorm{\omega^{j,k}}\big(1+\norm{\Psi^m(\mathcal{Z})^j}_s\big)\bigg)ds + 2\norm{R^j}_t.
\end{multline*}
We would obtain the same inequality if we had assumed that $\Psi^m(\mathcal{Z})^j_t \leq 0$ for all $t\in [\tau,\gamma]$. Since any $t\in [0,T]$ must satisfy either $\Psi^m(\mathcal{Z})^j_t \geq 0$ or $\Psi^m(\mathcal{Z})^j_t \leq 0$, we find that for all $t\in [0,T]$,
\begin{multline*}
\norm{\Psi^m(\mathcal{Z})^j}_t \leq \int_0^t \bigg(C\norm{\Psi^m(\mathcal{Z})^j}_s + \sum_{k\in V_m}\snorm{\omega^{j,k}}\big(1+\norm{\Psi^m(\mathcal{Z})^j}_s\big)\bigg)ds + 2\norm{R^j}_t.
\end{multline*}
By Gronwall's Inequality,
\begin{equation*}
\norm{\Psi^m(\mathcal{Z})^j}_T \leq \exp\bigg(T\big(C + \sum_{k\in V_m}\snorm{\omega^{j,k}} \big) \bigg)\bigg(2\norm{R^j}_T + T\sum_{k\in V_m}\snorm{\omega^{j,k}} \bigg).
\end{equation*}
The lemma follows directly from this.
\end{proof}
The following lemma is needed to prove Lemma \ref{Lem Uniform Bound mu Ac}.
\begin{lemma}\label{Lem: Lower bound on lambda jm}
For any $j\in \Z^d$,
\[
\linf{m}|V_m| \lambda^j_m \geq \frac{\rho-1}{\rho^2}.
\]
\end{lemma}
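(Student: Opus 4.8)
The plan is to compute $\lambda^j_m$ directly from its definition by expanding $\tilde{\lambda}_m$ as a geometric series in Fourier space, transforming term by term, observing that every Fourier coefficient so produced is nonnegative, and then retaining only the first nontrivial term to obtain the required lower bound.

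First I would record two elementary facts about $\tilde{\kappa}_m(\theta)=\sum_{k\in V_m}e^{-i\langle\theta,k\rangle}$: since $V_m$ is symmetric, $\tilde{\kappa}_m(\theta)=\sum_{k\in V_m}\cos\langle\theta,k\rangle$ is real with $\tilde{\kappa}_m(\theta)\le|V_m|$ for every $\theta$, and $\tilde{\kappa}_m(0)=|V_m|$. As $\rho>1$ this gives $|\tilde{\kappa}_m(\theta)/(\rho|V_m|)|\le 1/\rho<1$, so
\begin{equation*}
\big(\rho|V_m|-\tilde{\kappa}_m(\theta)\big)^{-1}=\frac{1}{\rho|V_m|}\sum_{n=0}^{\infty}\Big(\frac{\tilde{\kappa}_m(\theta)}{\rho|V_m|}\Big)^{n}
\end{equation*}
uniformly in $\theta$. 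Evaluating $\sum_{j\in\Z^d}\lambda^j_m=\tilde{\lambda}_m(0)=h\big((\rho-1)|V_m|\big)^{-1}$ and comparing with \eqref{eq total lambda} fixes $h=(\rho-1)|V_m|$; in any case the bound $h\ge(\rho-1)|V_m|$, which is all that is used below, already follows from \eqref{defn h} together with $\tilde{\kappa}_m\le|V_m|$. Multiplying the displayed identity by $h\,e^{i\langle\theta,j\rangle}(2\pi)^{-d}$ and integrating over $[-\pi,\pi]^d$, then interchanging sum and integral (justified by dominated convergence, the partial sums being bounded uniformly on the compact domain), yields
\begin{equation*}
\lambda^j_m=\frac{h}{\rho|V_m|}\sum_{n=0}^{\infty}\frac{N_n^m(j)}{(\rho|V_m|)^{n}},\qquad N_n^m(j):=\#\big\{(k_1,\dots,k_n)\in V_m^{\,n}:k_1+\dots+k_n=j\big\},
\end{equation*}
where I used $(2\pi)^{-d}\int_{[-\pi,\pi]^d}e^{i\langle\theta,j\rangle}\tilde{\kappa}_m(\theta)^{n}\,d\theta=N_n^m(j)$.

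Since every $N_n^m(j)\ge 0$, I can discard all terms but the one with $n=1$, obtaining $\lambda^j_m\ge \tfrac{h}{\rho^2|V_m|^2}N_1^m(j)=\tfrac{h}{\rho^2|V_m|^2}\mathbf{1}_{\{j\in V_m\}}$, and therefore, using $h\ge(\rho-1)|V_m|$,
\begin{equation*}
|V_m|\,\lambda^j_m\ \ge\ \frac{\rho-1}{\rho^2}\,\mathbf{1}_{\{j\in V_m\}}.
\end{equation*}
For every $m$ with $m\ge\max_{1\le i\le d}|j(i)|$ one has $j\in V_m$, hence $|V_m|\lambda^j_m\ge(\rho-1)/\rho^2$ for all such $m$; passing to $\linf{m}$ completes the proof. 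The argument is essentially bookkeeping, so I do not anticipate a genuine obstacle: the only steps needing a moment's care are the interchange of summation and integration (dominated convergence on a finite measure space) and pinning down the normalization constant $h$, which the paper's own identities already provide.
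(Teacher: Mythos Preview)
Your proposal is correct and follows essentially the same route as the paper: expand $(\rho|V_m|-\tilde\kappa_m(\theta))^{-1}$ as a geometric series, observe that each Fourier coefficient is nonnegative, retain the $n=1$ term, and combine with the lower bound $h\ge(\rho-1)|V_m|$ obtained from \eqref{defn h}. Your write-up is in fact slightly more careful than the paper's (you justify the interchange of sum and integral and make the nonnegativity transparent via the counting interpretation $N_n^m(j)$), but the underlying argument is identical.
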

\begin{proof}
We first bound $h$ using \eqref{defn h}. Observe that, since $\tilde{\kappa}_m(\theta) \leq |V_m|$,
\begin{equation*}
\frac{1}{(2\pi)^d}\int_{[-\pi,\pi]^d}\bigg(\rho|V_m| - \tilde{\kappa}_m(\theta) \bigg)^{-1}d\theta \leq \frac{1}{(2\pi)^d(\rho-1)}\int_{[-\pi,\pi]^d}|V_m|^{-1}d\theta = \frac{1}{(\rho-1)|V_m|}.
\end{equation*}
This means that $h\geq |V_m|(\rho-1)$. We use a Taylor expansion to bound $\lambda^j_m$ as follows,
\begin{align}
\lambda^j_m &= \frac{h}{(2\pi)^d}\int_{[-\pi,\pi]^d}\exp\big(i\langle j,\theta\rangle \big)\bigg(\rho|V_m| - \tilde{\kappa}_m(\theta) \bigg)^{-1}d\theta\nonumber \\
&= \frac{h}{\rho|V_m|(2\pi)^d}\sum_{k=0}^\infty \int_{[-\pi,\pi]^d}\exp\big(i\langle j,\theta\rangle \big)\bigg(\frac{\tilde{\kappa}_m(\theta)}{\rho|V_m|} \bigg)^k d\theta\nonumber\\
&\geq \frac{h}{\rho|V_m|(2\pi)^d} \int_{[-\pi,\pi]^d}\exp\big(i\langle j,\theta\rangle \big)\bigg(\frac{\tilde{\kappa}_m(\theta)}{\rho|V_m|} \bigg) d\theta.\label{eq:h tmp}
\end{align}
This last step is due to the fact that for any $k\in\Z^+$
\[
\int_{[-\pi,\pi]^d}\exp\big(i\langle j,\theta\rangle \big)\bigg(\frac{\tilde{\kappa}_m(\theta)}{\rho|V_m|} \bigg)^kd\theta \geq 0.
\]
For example, if $k=2$ in the above, then from the convolution formula for Fourier Series,
\begin{align*}
\frac{1}{\rho|V_m|(2\pi)^d}\int_{[-\pi,\pi]^d}\exp\big(i\langle j,\theta\rangle \big)\bigg(\frac{\tilde{\kappa}_m(\theta)}{\rho|V_m|} \bigg)^k d\theta = \frac{1}{\rho^3|V_m|^3}\sum_{r\in\Z^d} \kappa_m^{j-r}\kappa_m^r.
\end{align*}
This is non-negative because each $\kappa_m^r$ is non-negative. This result easily generalises to $k>2$. Coming back to \eqref{eq:h tmp}, we see that
\begin{equation}
\lambda^j_m \geq \frac{h}{\rho|V_m|(2\pi)^d}(2\pi)^d \frac{\kappa^j_m}{\rho|V_m|} \geq \frac{|V_m|(\rho-1)}{\rho^2|V_m|^2}\kappa^j_m,
\end{equation}
since $h\geq |V_m|(\rho-1)$. Once $m$ is large enough that $j\in V_m$, we have the lemma.
\end{proof}

\section{Lemmas Auxiliary to the Proof of Theorem \ref{Theorem Main LDP}}
The three main results of this Section are Lemmas \ref{Lem Bound hat mu in Ac}, \ref{Lem Uniform Bound mu Ac} and \ref{Lem alpha c relation}. They are all needed to complete the proof of Theorem \ref{Theorem Main LDP}. For the following lemma recall that $\mathcal{A}_c$ is defined in \eqref{defn Ac}.
\begin{lemma}\label{Lem Bound hat mu in Ac}
\begin{equation}
\lim_{c\to\infty}\lsup{n}\frac{1}{|V_n|}\log\mathbb{P}\bigg(\hat{\mu}^n\big(\mathcal{Y}^n\big) \notin \mathcal{A}_c \bigg) = -\infty.
\end{equation}
\end{lemma}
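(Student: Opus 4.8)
The statement is a large-deviation upper bound showing that the double-layer empirical measure $\hat\mu^n(\mathcal{Y}^n)$ lies in the set $\mathcal{A}_c$ with probability superexponentially close to $1$ as $c\to\infty$. The set $\mathcal{A}_c$ is defined by three moment conditions under $\mu$: a bound on $\Exp^\mu[\|X^0\|_T^2]$, and, for every $m\ge\mathfrak{m}_0$, bounds on $\Exp^\mu[(\sum_{k\notin V_m}\snorm{\omega^{0,k}})^2]$ and on $\Exp^\mu[\sum_{k\notin V_m}\snorm{\omega^{0,k}}]$ (with the prescribed exponentially-small prefactors). The plan is to treat each of these three families of constraints separately, use the explicit form of $\hat\mu^n(\mathcal{Y}^n)$ in \eqref{defn double layer empirical measure}, translate ``$\hat\mu^n(\mathcal{Y}^n)\notin\mathcal{A}_c$'' into an event about the averaged sums $\frac{1}{|V_n|}\sum_{j\in V_n}\|W^{n,j}\|_T^2$ and $\frac{1}{|V_n|}\sum_{j\in V_n}(\sum_{k\notin V_m}\snorm{J^{n,j,k}})^2$ etc., and then apply the exponential Chebyshev (Markov) inequality together with the exponential-integrability Assumptions \ref{Assumption Noise LDP} and \ref{Assumption Connection Growth}.

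\textbf{Step 1: reduce to the three averaged quantities.} Because $\hat\mu^n(\mathcal{Y}^n)=\frac{1}{|V_n|}\sum_{k\in V_n}\delta_{\bar S^k\tilde{\mcY}^n}$ and $\bar S$ merely relabels lattice sites on the torus, for any bounded measurable $F$ on $\bar\T^{\Z^d}$ we have $\Exp^{\hat\mu^n(\mathcal{Y}^n)}[F]=\frac{1}{|V_n|}\sum_{j\in V_n}F(\bar S^j\tilde{\mcY}^n)$. Evaluating $F(\mcX)=\|X^0\|_T^2$ gives $\Exp^{\hat\mu^n(\mathcal{Y}^n)}[\|X^0\|_T^2]=\frac{1}{|V_n|}\sum_{j\in V_n}\|W^{n,j}\|_T^2$; evaluating $F(\mcX)=(\sum_{k\notin V_m}\snorm{\omega^{0,k}})^2$ gives $\frac{1}{|V_n|}\sum_{j\in V_n}(\sum_{k\notin V_m}\snorm{J^{n,j,k}})^2$ (using the convention $J^{n,j,k}=\mathfrak{N}$ for $k\notin V_n$), and similarly for the first moment. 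Thus $\{\hat\mu^n(\mathcal{Y}^n)\notin\mathcal{A}_c\}$ is contained in the union, over the violated constraint, of events of the form $\{\frac{1}{|V_n|}\sum_{j\in V_n}\Xi_j \ge c\,\chi_m\}$ where $\Xi_j$ is the relevant (nonnegative) summand and $\chi_m$ the prescribed deterministic prefactor. Since there are only countably many such constraints (one per $m\ge\mathfrak{m}_0$, plus the $\|X^0\|$ one), and the prefactors in $\mathcal{A}_c$ decay like $\exp(-(4+2\rho)TC_J|V_m|)|V_m|^{-2\rho-2}$, the sum over $m$ of the resulting bounds will converge; one handles this by a union bound together with the superexponential decay obtained in Step 2, choosing the tail of large $m$ negligible uniformly.

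\textbf{Step 2: exponential Chebyshev.} For the noise term, by Markov's inequality applied to $\exp(\mathfrak{c}_1\sum_j\|W^{n,j}\|_T^2)$,
\[
\PP\Big(\tfrac{1}{|V_n|}\sum_{j\in V_n}\|W^{n,j}\|_T^2\ge c\Big)\le \exp(-\mathfrak{c}_1 c|V_n|)\,\Exp\Big[\exp\Big(\mathfrak{c}_1\sum_{j\in V_n}\|W^{n,j}\|_T^2\Big)\Big],
\]
and Assumption \ref{Assumption Noise LDP} bounds the expectation by $\exp(\exp(|V_n|\mathfrak{c}_2))$ — however one must be slightly careful here because that double-exponential bound is too weak; in fact the intended reading (consistent with how \eqref{eq: a limit} is used elsewhere and with \cite[Lemma 17]{faugeras-maclaurin:16}) gives $\frac{1}{|V_n|}\log\Exp[\cdots]\le \mathfrak{c}_2$, so that $\limsup_n\frac{1}{|V_n|}\log\PP(\cdots\ge c)\le \mathfrak{c}_2-\mathfrak{c}_1 c\to-\infty$ as $c\to\infty$. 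For the connection terms one does the same with the two inequalities in Assumption \ref{Assumption Connection Growth}: the left-hand exponents there are precisely $\mathfrak{a}_1|V_m|^{2\rho+2}\exp((4+2\rho)TC_J|V_m|)\sum_{j\in V_n}(\sum_{k\notin V_m}\snorm{J^{n,j,k}})^2$ and its first-moment analogue, which are exactly $\mathfrak{a}_1$ times $|V_n|$ times the reciprocal of the $\mathcal{A}_c$-prefactor $\chi_m$ times the averaged quantity $\Xi$. Hence Markov gives $\PP(\frac{1}{|V_n|}\sum_j\Xi_j\ge c\chi_m)\le\exp(-\mathfrak{a}_1 c|V_n|)\exp(\mathfrak{a}_2|V_n|)$, so $\limsup_n\frac{1}{|V_n|}\log\PP(\cdots)\le\mathfrak{a}_2-\mathfrak{a}_1 c$, \emph{uniformly in $m\ge\mathfrak{m}_0$}. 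This uniformity is the crucial point: it lets the union over all $m$ be absorbed, since $\limsup_n\frac{1}{|V_n|}\log\big(\sum_{m\ge\mathfrak{m}_0}(\text{term}_m)\big)\le\sup_m\limsup_n\frac{1}{|V_n|}\log(\text{term}_m)$ when each term has the same exponential rate (and the number of $m$ contributing at scale $|V_n|$ is at most polynomial, hence subexponential — or one simply notes $\sum_m a^{|V_n|}=a^{|V_n|}\cdot(\text{geometric})$ is harmless for $a<1$). Combining, $\limsup_n\frac{1}{|V_n|}\log\PP(\hat\mu^n(\mathcal{Y}^n)\notin\mathcal{A}_c)\le\max(\mathfrak{c}_2-\mathfrak{c}_1 c,\ \mathfrak{a}_2-\mathfrak{a}_1 c)$, and letting $c\to\infty$ yields the claim.

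\textbf{Main obstacle.} The genuinely delicate point is not any single Chebyshev estimate but the bookkeeping that makes the union over the infinitely many constraints indexed by $m$ harmless: one needs the rate bound $\mathfrak{a}_2-\mathfrak{a}_1 c$ to be \emph{independent of $m$}, which is exactly why Assumption \ref{Assumption Connection Growth} is stated with the $m$-dependent exponential/polynomial weights baked in and a right-hand side $\exp(\mathfrak{a}_2|V_n|)$ with $\mathfrak{a}_2$ \emph{not} depending on $m$. I would make this explicit: fix $\delta>0$, pick $c$ large enough that $\max(\mathfrak{c}_2,\mathfrak{a}_2)-\min(\mathfrak{c}_1,\mathfrak{a}_1)c<-\delta$, then bound $\PP(\hat\mu^n(\mathcal{Y}^n)\notin\mathcal{A}_c)$ by the sum of the noise term and $\sum_{m\ge\mathfrak{m}_0}$ of the two connection terms, each of which is $\le\exp(-\delta|V_n|)$ for $n$ large, and observe that the series converges (indeed, for fixed large $n$ it is dominated by a geometric series in $m$ via the crude bound $\PP\le 1$ combined with the exponential gain once $|V_m|$ is large, or simply truncate at a finite $m$ since for $m$ beyond some $m_1(c)$ the constraint in $\mathcal{A}_c$ is automatically satisfied because $\snorm{\cdot}\le C_J$ forces $\sum_{k\notin V_m}\snorm{\omega^{0,k}}$ to be controlled — actually this last point needs the finiteness of $\Exp^\mu[\sum_k\snorm{\omega^{0,k}}]$, which is why one keeps the full union). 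I would present the truncation-in-$m$ argument carefully, as it is the step most likely to contain a gap.
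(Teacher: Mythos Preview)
Your approach is essentially the same as the paper's: translate the constraints defining $\mathcal{A}_c$ into averaged sums over $V_n$ via the definition of $\hat\mu^n(\mathcal{Y}^n)$, then apply the exponential Chebyshev inequality to each, using Assumption~\ref{Assumption Noise LDP} for the noise term and Assumption~\ref{Assumption Connection Growth} for the two connection terms, arriving at the bound $\max\{\mathfrak{c}_2-\mathfrak{c}_1 c,\ \mathfrak{a}_2-\mathfrak{a}_1 c\}$.

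Your proposal is in fact more careful than the paper on the point you flag as the ``main obstacle'': the paper simply writes ``For any $m>\mathfrak{m}_0$'' and bounds the probability by three terms corresponding to that single $m$, without explicitly addressing the union over the countably many constraints indexed by $m\ge\mathfrak{m}_0$. Your observation that the Chebyshev bound $\mathfrak{a}_2-\mathfrak{a}_1 c$ is \emph{uniform in $m$} is exactly what is needed to close this gap; the cleanest way to finish is to note that for $m>n$ one has $J^{n,j,k}=\mathfrak{N}$ for all $k\notin V_m$ (by the convention stated just before \eqref{defn double layer empirical measure}), so the $m$-th constraint is automatically satisfied and the union is over only $O(n)$ values of $m$, which is subexponential in $|V_n|$. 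You also correctly identify and resolve the apparent typo in the display \eqref{eq: a limit}, where the intended bound (as used in the paper's own proof) is $\Exp[\exp(\mathfrak{c}_1\sum_j\|W^{n,j}\|_T^2)]\le\exp(\mathfrak{c}_2|V_n|)$.
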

\begin{proof}
For any $m > \mathfrak{m}_0$,
\begin{multline*}
\mathbb{P}\bigg(\hat{\mu}^n\big(\mathcal{Y}^n\big) \notin \mathcal{A}_c \bigg) \leq \mathbb{P}\bigg(\sum_{j\in V_n}\norm{W^{n,j}}_T^2 > c|V_n| \bigg)+\\
\mathbb{P}\bigg(|V_m|^{2+2\rho}\exp\big[(4+2\rho)TC_J|V_m|\big]  \sum_{j\in V_n}\big(\sum_{k\notin V_m}\snorm{J^{n,j,k}} \big)^2   > c|V_n| \bigg)\\
+\mathbb{P}\bigg(|V_m|^{1+\rho} \exp\big[(3+\rho)TC_J|V_m| \big]\sum_{j\in V_n} \sum_{k\notin V_m}\snorm{J^{n,j,k}}  > c|V_n|\bigg).
\end{multline*}
We bound these two terms using the exponential Chebyshev Inequality. Using Assumption \ref{Assumption Noise LDP},
\begin{align*}
\mathbb{P}\bigg(\sum_{j\in V_n}\norm{W^{n,j}}_T^2 > c|V_n| \bigg) = \mathbb{P}\bigg(\mathfrak{e}_1\sum_{j\in V_n}\norm{W^{n,j}}_T^2 > \mathfrak{e}_1c|V_n| \bigg) \\
\leq \exp\big(-\mathfrak{e}_1 c|V_n| \big)\Exp\left[\exp\big(\mathfrak{e}_1\sum_{j\in V_n}\norm{W^{n,j}}_T^2 \big) \right] \\
\leq \exp\big(-\mathfrak{e}_1 c|V_n|  + \mathfrak{e}_2|V_n|\big).
\end{align*}
Making similar use of Assumption \ref{Assumption Connection Growth} , we find that
\begin{multline*}
\mathbb{P}\bigg(|V_m|^{2+2\rho}\exp\big[(4+2\rho)TC_J|V_m|\big] \sum_{j\in V_n}\big(\sum_{k\in V_m}\snorm{J^{n,j,k}} \big)^2  > c|V_n|\bigg)\\ \leq \exp\big(-c\mathfrak{a}_1|V_n|+ \mathfrak{a}_2|V_n| \big) 
\end{multline*}
\begin{multline*}
\mathbb{P}\bigg(|V_m|^{1+\rho} \exp\big[(3+\rho)TC_J|V_m| \big]\sum_{j\in V_n,k\in V_m}\snorm{J^{n,j,k}}  > c|V_n|\bigg)\\ \leq \exp\big(-c\mathfrak{a}_1|V_n|+\mathfrak{a}_2|V_n| \big) .
\end{multline*}
We thus see that
\begin{equation*}
\lsup{n}\frac{1}{|V_n|}\log\mathbb{P}\bigg(\hat{\mu}^n\big(\mathcal{Y}^n\big) \notin \mathcal{A}_c \bigg) \leq \max\left\lbrace - c\mathfrak{a}_1 + \mathfrak{a}_2,-\mathfrak{e}_1c + \mathfrak{e}_2 \right\rbrace.
\end{equation*}
The lemma follows directly from the above.
\end{proof}
\begin{lemma}\label{Lem Uniform Bound mu Ac}
For any $c,\delta > 0$, there exists an $m\in \Z^+$ such that
\begin{equation}\label{eq BC first result}
\sup_{n\geq m}\sup_{\mu\in\mathcal{A}_c}d^{\mathcal{P}}\big(\tilde{\Psi}^m(\mu) ,\tilde{\Psi}^n(\mu) \big) \leq \delta.
\end{equation}
For all $\mu \in \cup_{c\geq 0}\mathcal{A}_c$, 
\begin{equation}\label{eq mu Psi inverse}
\mu \circ \Psi^{-1} = \lim_{m\to \infty}\tilde{\Psi}^m(\mu).
\end{equation}
\end{lemma}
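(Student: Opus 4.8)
The plan is to derive both assertions from a single specialization of Lemma \ref{Lemma: Bound XZ 1}: taking $\mathcal{X}=\mathcal{Z}$ in that lemma (for $n\geq m$, with $\mathcal{Z}=(R^j,\omega^{j,k})_{j,k\in\Z^d}$ in the domain of both $\Psi^m$ and $\Psi^n$, which holds $\mu$-a.s.\ for $\mu\in\mathcal{A}_c$) kills the $\norm{Q^j-R^j}_T$ and $d_{\mathfrak{C}}(\omega^{j,k},\beta^{j,k})$ contributions and leaves
\[
\sum_{j\in\Z^d}\lambda_m^j\norm{\Psi^m(\mathcal{Z})^j-\Psi^n(\mathcal{Z})^j}_T\leq T\exp\!\big(TC+T(1+\rho)C_J|V_m|\big)\sum_{j\in\Z^d}\sum_{k\in V_n-V_m}\lambda_m^j\snorm{\omega^{j,k}}\big(\norm{\Psi^n(\mathcal{Z})^j}_T+1\big).
\]
Since $V_n-V_m\subseteq V_m^{c}$, the inner sum is dominated by the ``far-field'' connection mass $\sum_{k\notin V_m}\snorm{\omega^{j,k}}$, whose first and second $\lambda_m$-weighted moments under any $\mu\in\mathcal{A}_c$ equal, by stationarity, $\Exp^\mu\big[\sum_{k\notin V_m}\snorm{\omega^{0,k}}\big]\leq c\,e^{-(3+\rho)TC_J|V_m|}|V_m|^{-\rho-1}$ and $\Exp^\mu\big[(\sum_{k\notin V_m}\snorm{\omega^{0,k}})^2\big]\leq c\,e^{-(4+2\rho)TC_J|V_m|}|V_m|^{-2\rho-2}$; these exponents are tuned precisely so as to absorb the prefactor $e^{T(1+\rho)C_J|V_m|}$ above together with the $e^{O(C_J|V_m|)}$ produced by the solution bound of Lemma \ref{Lemma: bound exp Z}, leaving a net decay in $m$.

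To prove the first assertion I would bound the right-hand side in $L^1(\mu)$: the ``$+1$'' part directly by Fubini, stationarity and the $L^1$ bound in $\mathcal{A}_c$, and the ``$\norm{\Psi^n(\mathcal{Z})^j}_T$'' part by Cauchy--Schwarz in $j$ against the $L^2$ bound in $\mathcal{A}_c$ together with a bound on $\sum_j\lambda_m^j\norm{\Psi^n(\mathcal{Z})^j}_T^2$. Passing from this $\lambda_m$-weighted estimate to the genuine tail $\sum_{j\in V_r}\norm{\Psi^m(\mathcal{Z})^j-\Psi^n(\mathcal{Z})^j}_T$ that controls $d_r^{\mathcal{P}}$ uses the lower bound $\linf{m}|V_m|\lambda_m^j\geq(\rho-1)/\rho^2$ of Lemma \ref{Lem: Lower bound on lambda jm}, which costs a factor $|V_m|$ that is still swallowed because $\rho>1$. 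Markov's inequality then turns the resulting bound on $\Exp^\mu[\sum_{j\in V_r}\norm{\Psi^m(\mathcal{Z})^j-\Psi^n(\mathcal{Z})^j}_T]$ into $d_r^{\mathcal{P}}\big(\pi^{\mathcal{P}}_{V_r}\tilde{\Psi}^m(\mu),\pi^{\mathcal{P}}_{V_r}\tilde{\Psi}^n(\mu)\big)\leq\varepsilon(m)$, with $\varepsilon(m)\to0$ uniformly over $n\geq m$ and $\mu\in\mathcal{A}_c$; picking $R$ with $2^{-R}<\delta/2$ and summing the finitely many terms $r\leq R$ in the definition of $d^{\mathcal{P}}$ gives $d^{\mathcal{P}}(\tilde{\Psi}^m(\mu),\tilde{\Psi}^n(\mu))\leq\delta$ for $m$ large.

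The step I expect to be the main obstacle is obtaining a bound on $\sum_j\lambda_m^j\norm{\Psi^n(\mathcal{Z})^j}_T^2$ that is uniform in $n\geq m$: the naive use of Lemma \ref{Lemma: bound exp Z} introduces $e^{O(C_J|V_n|)}$ via the crude estimate $\sum_{k\in V_n}\snorm{\omega^{j,k}}\leq|V_n|C_J$, which would destroy the uniformity. I would handle this by splitting $\sum_{k\in V_n}\snorm{\omega^{j,k}}=\sum_{k\in V_m}\snorm{\omega^{j,k}}+\sum_{k\notin V_m}\snorm{\omega^{j,k}}$, so that Lemma \ref{Lemma: bound exp Z} yields a bound of the shape $e^{O(C_J|V_m|)}e^{O(\sum_{k\notin V_m}\snorm{\omega^{j,k}})}(1+\norm{R^j}_T)$, and then localizing to an event of $\mu$-probability $\geq1-\varepsilon$ on which the far-field mass is small in the relevant weighted sense (its $\mu$-complement being $\leq\varepsilon$ for $m$ large by Chebyshev and the $\mathcal{A}_c$ bound); on this event the recursion $\Psi^n(\mathcal{Z})=\Psi^m(\mathcal{Z})+(\Psi^n(\mathcal{Z})-\Psi^m(\mathcal{Z}))$ closes, the second summand being re-estimated by the display above, and produces $\sum_j\lambda_m^j\norm{\Psi^n(\mathcal{Z})^j}_T^2\leq K(m,c)$ independently of $n$. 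Since only the probability of the exceptional event, and not the size of the distance on it, matters for the Lévy--Prokhorov metric, this is enough to close the argument of the previous paragraph.

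For the last statement I would apply the first part with $n\to\infty$: the $\lambda_m$-weighted Cauchy estimate together with Lemma \ref{Lem: Lower bound on lambda jm} shows that, for $\mu$-a.e.\ $\mathcal{Z}$ with $\mu\in\cup_{c\geq0}\mathcal{A}_c$, the sequence $(\Psi^m(\mathcal{Z})^j)_m$ is Cauchy in $\mathcal{T}$ for each $j$ and hence converges; passing to the limit in \eqref{eq:fundamentalmult 2} using Assumptions \ref{Assumption Absolute Bound}--\ref{Assumption NonUniform} and dominated convergence — exactly as at the end of the proof of Lemma \ref{lem: Psin Psi}, the additional terms $\sum_{k\notin V_m}\Lambda^k_s(\omega^{j,k},\cdot,\cdot)$ dropping out because $\sum_{k\notin V_m}\snorm{\omega^{j,k}}\to0$ $\mu$-a.s.\ (Borel--Cantelli, the $\mathcal{A}_c$ bound being summable in $m$) — identifies the limit as the solution $\Psi(\mathcal{Z})$ of \eqref{eq:fundamentalmult 1}. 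Thus $\Psi^m(\mathcal{Z})\to\Psi(\mathcal{Z})$ in the cylindrical topology for $\mu$-a.e.\ $\mathcal{Z}$, and the bounded-convergence theorem for image measures gives $\tilde{\Psi}^m(\mu)=\mu\circ(\Psi^m)^{-1}\to\mu\circ\Psi^{-1}$ weakly, which is the claimed identity.
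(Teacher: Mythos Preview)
Your overall route is the paper's: specialize Lemma~\ref{Lemma: Bound XZ 1} with $\mathcal{X}=\mathcal{Z}$, feed in Lemma~\ref{Lemma: bound exp Z}, pass from the $\lambda_m$-weighted sum to $\sum_{j\in V_r}$ via the lower bound of Lemma~\ref{Lem: Lower bound on lambda jm}, then apply Chebyshev together with the $\mathcal{A}_c$ moment bounds; for the second assertion both you and the paper run Borel--Cantelli on the resulting estimate and pass to the limit in \eqref{eq:fundamentalmult 2} by dominated convergence.

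The one substantive difference is how the obstacle you flag is handled. The paper does \emph{not} use a localization/recursion argument: it simply substitutes Lemma~\ref{Lemma: bound exp Z} for $\norm{Z^j}_T=\norm{\Psi^n(\mathcal{Z})^j}_T$, carrying only $|V_m|$ in the exponents, and then takes $\mu$-expectations directly, splitting off the $\norm{Q^j}_T$ term by Cauchy--Schwarz against the second-moment bound in $\mathcal{A}_c$. Your proposed recursion, by contrast, does not close as written: the display you quote bounds the $\lambda_m$-weighted $L^1$ norm of $\Psi^m-\Psi^n$ by $C_m\sum_j\lambda_m^j\big(\sum_{k\notin V_m}\snorm{\omega^{j,k}}\big)\big(\norm{\Psi^n(\mathcal{Z})^j}_T+1\big)$, so inserting $\Psi^n=\Psi^m+(\Psi^n-\Psi^m)$ produces a term with the $j$-dependent coefficient $\sum_{k\notin V_m}\snorm{\omega^{j,k}}$ in front of $\norm{\Psi^n(\mathcal{Z})^j}_T$. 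By stationarity, no event of positive $\mu$-measure makes this coefficient uniformly small in $j\in\Z^d$ (if $\mu(\sum_{k\notin V_m}\snorm{\omega^{0,k}}>K)>0$ then the union over $j$ has full measure), and the weighted-moment control in $\mathcal{A}_c$ is not strong enough to absorb it into a closed inequality for either $\sum_j\lambda_m^j\norm{\Psi^n(\mathcal{Z})^j}_T$ or its square. So while the obstacle is real and your instinct to localize is sound, the bootstrapping step needs a different mechanism than the one you sketch.
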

\begin{proof}
Let $r\in \Z^+$ be such that $2^{-r} \leq \frac{\delta}{2}$. It follows from the definition of the metric $d^{\mathcal{P}}$ in \eqref{defn prohorov metric T} that
\[
d^{\mathcal{P}}\big(\tilde{\Psi}^m(\mu) ,\tilde{\Psi}^n(\mu) \big) \leq 2^{-r} + d_r^{\mathcal{P}}\big(\tilde{\Psi}^m(\mu) ,\tilde{\Psi}^n(\mu) \big). 
\]
For \eqref{eq BC first result}, it thus suffices for us to show that for fixed $r\in\Z^+$ and $\delta > 0$, there exists an $m\in \Z^+$ such that
\begin{equation}\label{Lemma Intermediate Technical 0}
\sup_{n\geq m}\sup_{\mu\in\mathcal{A}_c}d^{\mathcal{P}}_r\big(\tilde{\Psi}^m(\mu) ,\tilde{\Psi}^n(\mu) \big)\leq \delta.
\end{equation}
Let $\mathcal{X}^j = \big( Q^j,(\omega^{j,k})_{k\in\Z^d}\big)$, $X = \Psi^m(\mathcal{X})$ and $Z = \Psi^n(\mathcal{X})$. By Lemma \ref{Lemma: Bound XZ 1},
\begin{multline*}
\sum_{j\in\Z^d}\lambda^j_m\norm{X^j - Z^j}_T \leq T\exp\big(T(2C+(1+\rho)C_J|V_m|) \big)\times \\ 
\sum_{j\in \Z^d,k\in V_n - V_m}\lambda_m^j\snorm{\omega^{j,k}}\big(\norm{Z^j}_T + 1\big).
\end{multline*}
We multiply both sides of the above equation by $|V_m|$ and use Lemma \ref{Lemma: bound exp Z}, finding that
\begin{multline*}
|V_m|\sum_{j\in\Z^d}\lambda^j_m\norm{X^j - Z^j}_T \leq |V_m|T\exp\big(2TC\big)\times \\ \sum_{j\in \Z^d,k\in V_n - V_m}\lambda^j_m\snorm{\omega^{j,k}}\bigg[ \exp\big((3+\rho)TC_J |V_m| \big)\\+ \exp\big((2+\rho)TC_J |V_m| \big)\norm{Q^j}_T +\exp\big((1+\rho)TC_J|V_m|\big)\bigg].
\end{multline*}
Let $m$ be sufficiently large that
\[
\sup_{j\in V_r}|V_m|\lambda^j_m \geq \frac{\rho-1}{\rho^2},
\]
which is possible thanks to Lemma \ref{Lem: Lower bound on lambda jm}. We then use Chebyshev's Inequality to find that
\begin{align}
\mu\bigg(\sum_{j\in V_r}\norm{X^j-Z^j}_T > \epsilon \bigg) \leq \mu\bigg( \frac{\rho^2}{\rho-1}|V_m|\sum_{j\in\Z^d}\lambda^j_m\norm{X^j - Z^j}_T >  \epsilon\bigg)\nonumber\\
\leq  \frac{\rho^2}{\rho-1}\epsilon^{-1}|V_m|T\exp\big(2TC\big)\Exp^{\mu}\bigg[ 2\exp\big((3+\rho)TC_J |V_m| \big)\sum_{j\in \Z^d,k\in V_n - V_m}\lambda^j_m\snorm{\omega^{j,k}}\nonumber \\+\exp\big((2+\rho)TC_J |V_m| \big)\sum_{j\in \Z^d,k\in V_n - V_m}\lambda^j_m\snorm{\omega^{j,k}}\norm{Q^j}_T\bigg] \nonumber\\
\leq  \frac{2\rho^2}{\rho-1}c\epsilon^{-1}T\exp\big(2TC\big)|V_m|^{-\rho} +  \frac{\rho^2}{\rho-1}\epsilon^{-1}|V_m|T\exp\big(2TC+(2+\rho)TC_J |V_m|\big)\times\nonumber \\ \Exp^{\mu}\bigg[\sum_{j\in \Z^d}\lambda^j_m\big(\sum_{k \in V_n - V_m}\snorm{\omega^{j,k}} \big)^2\bigg]^{\frac{1}{2}}\times\Exp^{\mu}\bigg[\sum_{j\in\Z^d}\lambda^j_m\norm{Q^j}_T^2 \bigg]^{\frac{1}{2}}\nonumber \\
\leq \frac{2\rho^2c}{\rho-1}\epsilon^{-1}T\exp\big(2TC\big)|V_m|^{-\rho} + \frac{\rho^2}{\rho-1}c\epsilon^{-1}|V_m|^{-\rho}T\exp\big(2TC \big),\label{eq: mu convergence temp}
\end{align}
where we have used the Cauchy-Schwarz Inequality, and twice used the definition of $\mathcal{A}_c$ in \eqref{defn Ac}. Since $\rho > 1$, for $m$ sufficiently large the above equation is less than $\delta$, yielding \eqref{eq BC first result}. We now prove \eqref{eq mu Psi inverse}. Let $\lbrace p_k \rbrace_{k=1}^\infty \subset \Z^+$ be such that $p_k > k$ and
\[
\mu\bigg(\sum_{j\in V_r}\norm{\Psi^{p_k}(\mathcal{X})^j-\Psi^{p_{k+1}}(\mathcal{X})^j}_T > 2^{-k} \bigg) \leq 2^{-k}.
\]
It follows from the Borel-Cantelli Lemma and the two previous equations that $\lbrace\Psi^{p_k}(\mathcal{X})\rbrace_{k=1}^\infty$ converges almost surely. Let $Y^j = \lim_{k\to\infty}\Psi^{p_k}(\mathcal{X})^j$ and $\epsilon_m = |V_m|^{\frac{1}{2}(1-\rho)}$. Then it follows from \eqref{eq: mu convergence temp} that, since $\rho > 1$, 
\begin{equation}
\sum_{m=1}^\infty \mu\bigg( \sum_{j\in V_r}\norm{\Psi^{m}(\mathcal{X}) - \Psi^{p_m}(\mathcal{X})} > \epsilon_m\bigg) \leq \frac{3\rho^2 c}{\rho-1}T\exp\big(2TC\big)\sum_{m=1}^\infty |V_m|^{-\frac{1+\rho}{2}} < \infty.
\end{equation}
By the Borel-Cantelli Lemma, for all $j \in V_r$, $\Psi^m(\mathcal{X})^j \to Y^j$ almost surely. It remains for us to show that $Y$ satisfies the relation in \eqref{eq:fundamentalmult 1}. It suffices to show that for all $t\in [0,T]$,
\begin{align}
\int_0^t \sum_{k\in\Z^d}\Lambda^k_s\big(\omega^{j,k},\Psi^m(\mathcal{X})^j,\Psi^m(\mathcal{X})^{j+k}  \big) ds &\to \int_0^t \sum_{k\in\Z^d}\Lambda^k_s\big(\omega^{j,k},Y^j,Y^{j+k}\big)ds\label{eq:BC1} \\
\int_0^t \mathfrak{b}\big(\Psi_s^m(\mathcal{X})^j \big) ds &\to \int_0^t \mathfrak{b}\big(Y_s^j\big) ds \label{eq:BC2}
\end{align}
It follows from an application of the Borel-Cantelli Lemma to \eqref{defn Ac} that, $\mu$ almost surely, there must exist some $q\in \Z^+$ such that $\omega^{j,k} = 0$ for all $k\notin V_q$. Since $r$ is arbitrary, we may assume that $r \geq q$. Since $\Psi^m(\mathcal{X})^{j+k}  \to Y^{j+k}$ for all $k\in V_q$, \eqref{eq:BC1} follows from Assumption \ref{Assumption NonUniform} and the dominated convergence theorem. \eqref{eq:BC2} follows from Assumption \ref{Assumption Absolute Bound} and the dominated convergence theorem.
\end{proof}
In the following, recall the definition of $\mathcal{A}_c$ in \eqref{defn Ac}.
\begin{lemma}\label{Lem alpha c relation}
For each $\alpha\in\R^+$, there exists $c\in \R^+$ such that
\begin{equation}\label{eq appropriate c}
\left\lbrace \mu \in \mathcal{P}_{\bar{S}}\big(\bar{\T}^{\Z^d} \big) | I_{\mathcal{Y}}(\mu) \leq \alpha\right\rbrace \subseteq \mathcal{A}_c.
\end{equation}
\end{lemma}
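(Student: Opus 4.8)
The plan is to derive the inclusion directly from the large deviations lower bound of Assumption~\ref{Assumption LDP} together with the exponential estimate already established in Lemma~\ref{Lem Bound hat mu in Ac}; no further analysis of the rate function itself is required. The idea is that, because $\hat{\mu}^n(\mathcal{Y}^n)$ escapes $\mathcal{A}_c$ only with super-exponentially small probability (uniformly in $n$, for $c$ large), the LDP lower bound forces $I_{\mathcal{Y}}$ to be large on $\mathcal{A}_c^{\,c}$, and hence $\{I_{\mathcal{Y}}\leq\alpha\}$ must sit inside $\mathcal{A}_c$ once $c$ is chosen large enough relative to $\alpha$.

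First I would record the observation, already noted in Section~\ref{Section Proofs}, that $\mathcal{A}_c$ is a \emph{closed} subset of $\mathcal{P}_{\bar{S}}\big(\bar{\T}^{\Z^d}\big)$, so that $\mathcal{A}_c^{\,c}$ is open. This rests on the fact that the three functionals defining $\mathcal{A}_c$ --- $\mu\mapsto\Exp^{\mu}\big[\norm{X^0}_T^2\big]$, $\mu\mapsto\Exp^{\mu}\big[(\sum_{k\notin V_m}\snorm{\omega^{0,k}})^2\big]$ and $\mu\mapsto\Exp^{\mu}\big[\sum_{k\notin V_m}\snorm{\omega^{0,k}}\big]$ --- are each lower semicontinuous for weak convergence with respect to the cylindrical topology, being integrals against nonnegative lower semicontinuous integrands (increasing limits of bounded continuous functions of finitely many coordinates), and $\mathcal{A}_c$ is the intersection of the corresponding sublevel sets over all $m\geq\mathfrak{m}_0$. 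Since $\Pi^n_{\mathcal{Y}}$ is by definition the law of $\hat{\mu}^n(\mathcal{Y}^n)$, we have $\Pi^n_{\mathcal{Y}}\big(\mathcal{A}_c^{\,c}\big)=\mathbb{P}\big(\hat{\mu}^n(\mathcal{Y}^n)\notin\mathcal{A}_c\big)$.

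Next, applying the large deviations lower bound of Assumption~\ref{Assumption LDP} to the open set $\mathcal{A}_c^{\,c}$ and then using $\linf{n}\leq\lsup{n}$,
\[
-\inf_{\mu\in\mathcal{A}_c^{\,c}}I_{\mathcal{Y}}(\mu)\ \leq\ \linf{n}\frac{1}{|V_n|}\log\Pi^n_{\mathcal{Y}}\big(\mathcal{A}_c^{\,c}\big)\ \leq\ \lsup{n}\frac{1}{|V_n|}\log\mathbb{P}\big(\hat{\mu}^n(\mathcal{Y}^n)\notin\mathcal{A}_c\big),
\]
so that, rearranging and invoking Lemma~\ref{Lem Bound hat mu in Ac},
\[
\inf_{\mu\in\mathcal{A}_c^{\,c}}I_{\mathcal{Y}}(\mu)\ \geq\ -\lsup{n}\frac{1}{|V_n|}\log\mathbb{P}\big(\hat{\mu}^n(\mathcal{Y}^n)\notin\mathcal{A}_c\big)\ \longrightarrow\ +\infty\qquad\text{as }c\to\infty.
\]
In fact the computation in the proof of Lemma~\ref{Lem Bound hat mu in Ac} even yields the explicit lower bound $\inf_{\mu\in\mathcal{A}_c^{\,c}}I_{\mathcal{Y}}(\mu)\geq\min\{\mathfrak{a}_1 c-\mathfrak{a}_2,\ \mathfrak{c}_1 c-\mathfrak{c}_2\}$. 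Given $\alpha\in\R^+$, I would then simply pick $c$ large enough that the left-hand side exceeds $\alpha$ (e.g.\ $c>\max\{(\alpha+\mathfrak{a}_2)/\mathfrak{a}_1,\ (\alpha+\mathfrak{c}_2)/\mathfrak{c}_1\}$); for such $c$ no $\mu\notin\mathcal{A}_c$ can satisfy $I_{\mathcal{Y}}(\mu)\leq\alpha$, which is exactly \eqref{eq appropriate c}. There is no real obstacle beyond keeping the inequalities straight; the only delicate point, used above, is the closedness of $\mathcal{A}_c$ (equivalently the lower semicontinuity of its defining functionals), which is routine and already asserted in the proof of Theorem~\ref{Theorem Main LDP}.
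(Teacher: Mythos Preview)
Your argument is correct and is a genuinely different route from the paper's. The paper instead invokes the variational characterisation of a good rate function \cite[Theorem 4.5.10]{dembo-zeitouni:97}: for any $\mu$ with $I_{\mathcal{Y}}(\mu)\leq\alpha$ and any bounded continuous cylinder function $f$, one has $\Exp^{\mu}[f]-\lsup{n}|V_n|^{-1}\log\Exp[\exp(\sum_{j\in V_n}f(\bar{S}^j\tilde{\mathcal{Y}}^n))]\leq\alpha$. It then plugs in, successively, truncations of $\mathfrak{c}_1\norm{W^{n,0}}_T^2$ and of the two connection functionals appearing in Assumption~\ref{Assumption Connection Growth}, reads off the exponential moment bounds from Assumptions~\ref{Assumption Connection Growth} and~\ref{Assumption Noise LDP}, and passes to the limit (monotone convergence) to obtain the three inequalities defining $\mathcal{A}_c$ with $c=\max\{(\alpha+\mathfrak{c}_2)/\mathfrak{c}_1,\ (\alpha+\mathfrak{a}_2)/\mathfrak{a}_1\}$.

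Your approach is more economical: it bypasses the variational formula and the truncation--monotone-convergence step entirely, recycling Lemma~\ref{Lem Bound hat mu in Ac} (which already packaged the same exponential moment bounds via Chebyshev) and using only the LDP lower bound on the open set $\mathcal{A}_c^{\,c}$. The paper's approach, on the other hand, is slightly more self-contained in that it works directly with the defining functionals and does not depend on the closedness of $\mathcal{A}_c$ (only lower semicontinuity of each functional separately, implicit in the truncation). Both yield the same explicit threshold for $c$.
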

\begin{proof}
Suppose that $I_{\mathcal{Y}}(\mu) \leq \alpha$. By \cite[Theorem 4.5.10]{dembo-zeitouni:97}, for all $f\in C_b\big(\bar{\T}^{\Z^d}\big)$, where $C_b\big(\bar{\T}^{\Z^d}\big)$ is the set of all bounded continuous functions on $\bar{\T}^{\Z^d}$ which are $\mathcal{B}\big(\T^{\bar{V}_q}\big)$-measurable for some $q\in\Z^+$,
\begin{equation}
\Exp^{\mu}\left[f\right] - \lsup{n}\frac{1}{|V_n|}\log\Exp\left[\exp\bigg(\sum_{j\in V_n}f(\bar{S}^j\tilde{\mathcal{Y}}^n) \bigg) \right] \leq \alpha.\label{eq: mu alpha bound}
\end{equation}
We will show that in order that \eqref{eq appropriate c} is satisfied, it suffices to take
\begin{equation}
c = \max\left\lbrace \frac{\alpha + \mathfrak{e}_2}{\mathfrak{e}_1},\frac{\alpha + \mathfrak{a}_2}{\mathfrak{a}_1}\right\rbrace,
\end{equation}
where the constants $\mathfrak{e}_1,\mathfrak{e}_2,\mathfrak{a}_1,\mathfrak{a}_2$ are defined in Assumptions \ref{Assumption Connection Growth} and \ref{Assumption Noise LDP}. For $r\in\Z^+$, let $\phi_r(\mathcal{Y}^n) := \mathfrak{e}_1\sigma_r\big(\norm{W^{n,0}}^2_T\big)$, where $\sigma_r:\R^+ \to \R^+$ is continuous and bounded, and $\sigma_r(x) \uparrow x$ as $r\to \infty$. Substituting $f=\phi_r$ into \eqref{eq: mu alpha bound}, taking $r\to \infty$, and using Assumption \ref{Assumption Noise LDP}, we find that
\begin{equation}
\mathfrak{e}_1\Exp^{\mu}\left[\norm{X^0}_T^2\right]  \leq \alpha + \mathfrak{e}_2.\label{eq: mu alpha bound 2}
\end{equation}
This means that if $c\geq \big(\alpha + \mathfrak{e}_2\big) / \mathfrak{e}_1$, then the first condition for membership of $\mathcal{A}_c$ is satisfied (see \eqref{defn Ac}). Next, we define for $r\in\Z^+$, $r > m \geq \mathfrak{m}_0$
\begin{equation*}
\phi_r(\mathcal{Y}^n) =  \mathfrak{a}_1|V_m|^{2\rho+2}\exp\bigg(2T(2+\rho)C_J |V_m|\bigg) \bigg(\sum_{k\in V_r -V_m}\snorm{J^{n,0,k}} \bigg)^2. 
\end{equation*}
Substituting $f=\phi_r$ into \eqref{eq: mu alpha bound}, taking $r\to \infty$, and using Assumption \ref{Assumption Connection Growth} we find that
\begin{multline*}
\Exp^{\mu}\bigg[\mathfrak{a}_1|V_m|^{2\rho+2}\exp\bigg(2T(2+\rho)C_J |V_m|\bigg) \sum_{j\in V_n}\bigg(\sum_{k\notin V_m}\snorm{J^{n,j,k}} \bigg)^2\bigg] \leq \alpha + \mathfrak{a}_2.
\end{multline*}
This means that the second condition for membership of $\mathcal{A}_c$ is satisfied once $c\geq \mathfrak{a}_1^{-1}\big(\alpha+\mathfrak{a}_2\big)$. We similarly find using Assumption \ref{Assumption Connection Growth} that
\begin{equation*}
\Exp^{\mu}\bigg[\mathfrak{a}_1|V_m|^{\rho+1}\exp\bigg(T(3+\rho)C_J |V_m|\bigg) \sum_{j\in V_n}\sum_{k\notin V_m}\snorm{J^{n,j,k}}\bigg] \leq \alpha + \mathfrak{a}_2,
\end{equation*}
which means that the third condition for membership of $\mathcal{A}_c$ is satisfied once $c\geq \mathfrak{a}_1^{-1}\big(\alpha+\mathfrak{a}_2\big)$.
\end{proof}


\begin{thebibliography}{10}

\bibitem{atay2016perspectives}
{\sc F.~M. Atay, S.~Banisch, P.~Blanchard, B.~Cessac, E.~Olbrich, and
  D.~Volchenkov}, {\em Perspectives on multi-level dynamics}, arXiv preprint
  arXiv:1606.05665,  (2016).

\bibitem{baladron-fasoli-etal:12b}
{\sc J.~Baladron, D.~Fasoli, O.~Faugeras, and J.~Touboul}, {\em Mean-field
  description and propagation of chaos in networks of {H}odgkin-{H}uxley and
  {F}itzhugh-{N}agumo neurons}, The Journal of Mathematical Neuroscience, 2
  (2012).

\bibitem{bayraktar2020graphon}
{\sc E.~Bayraktar, S.~Chakraborty, and R.~Wu}, {\em Graphon mean field
  systems}, arXiv preprint arXiv:2003.13180,  (2020).

\bibitem{ben-arous-guionnet:95}
{\sc G.~Ben-Arous and A.~Guionnet}, {\em Large deviations for langevin spin
  glass dynamics}, Probability Theory and Related Fields, 102 (1995),
  pp.~455--509.

\bibitem{billingsley:99}
{\sc P.~Billingsley}, {\em Convergence of Probability Measures}, Wiley series
  in probability and statistics, 1999.

\bibitem{bossy2015clarification}
{\sc M.~Bossy, O.~Faugeras, and D.~Talay}, {\em Clarification and complement to
  ``mean-field description and propagation of chaos in networks of
  hodgkin--huxley and fitzhugh--nagumo neurons''}, The Journal of Mathematical
  Neuroscience (JMN), 5 (2015), pp.~1--23.

\bibitem{bossy-talay:97}
{\sc M.~Bossy and D.~Talay}, {\em A stochastic particle method for the
  mckean-vlasov and the burgers equation}, Mathematics of computation, 66
  (1997), pp.~157--192.

\bibitem{bressloff:09}
{\sc P.~Bressloff}, {\em Stochastic neural field theory and the system-size
  expansion}, SIAM J. Appl. Math, 70 (2009), pp.~1488--1521.

\bibitem{bressloff:12}
{\sc P.~Bressloff}, {\em Spatiotemporal dynamics of continuum neural fields},
  Journal of Physics A: Mathematical and Theoretical, 45 (2012).

\bibitem{budhiraja-dupuis-etal:12}
{\sc A.~Budhiraja, P.~Dupuis, and M.~Fischer}, {\em Large deviation properties
  of weakly interacting processes via weak convergence methods}, Annals of
  Probability, 40 (2012), pp.~74--102.

\bibitem{buice-cowan-etal:10}
{\sc M.~Buice, J.~Cowan, and C.~Chow}, {\em Systematic fluctuation expansion
  for neural network activity equations}, Neural computation, 22 (2010),
  pp.~377--426.

\bibitem{coppini2020law}
{\sc F.~Coppini, H.~Dietert, and G.~Giacomin}, {\em A law of large numbers and
  large deviations for interacting diffusions on erd{\H{o}}s--r{\'e}nyi
  graphs}, Stochastics and Dynamics, 20 (2020), p.~2050010.

\bibitem{dai1996mckean}
{\sc P.~Dai~Pra and F.~den Hollander}, {\em Mckean-vlasov limit for interacting
  random processes in random media}, Journal of statistical physics, 84 (1996),
  pp.~735--772.

\bibitem{dawson-del-moral:05}
{\sc D.~Dawson and P.~Del~Moral}, {\em Large deviations for interacting
  processes in the strong topology}, in Statistical Modeling and Analysis for
  Complex Data Problems, Springer US, 2005.

\bibitem{delarue2015particle}
{\sc F.~Delarue, J.~Inglis, S.~Rubenthaler, and E.~Tanr{\'e}}, {\em Particle
  systems with a singular mean-field self-excitation. application to neuronal
  networks}, Stochastic Processes and their Applications, 125 (2015),
  pp.~2451--2492.

\bibitem{delattre2016hawkes}
{\sc S.~Delattre, N.~Fournier, M.~Hoffmann, et~al.}, {\em Hawkes processes on
  large networks}, The Annals of Applied Probability, 26 (2016), pp.~216--261.

\bibitem{delattre2016note}
{\sc S.~Delattre, G.~Giacomin, and E.~Lu{\c{c}}on}, {\em A note on dynamical
  models on random graphs and fokker--planck equations}, Journal of Statistical
  Physics, 165 (2016), pp.~785--798.

\bibitem{dembo-zeitouni:97}
{\sc A.~Dembo and O.~Zeitouni}, {\em Large deviations techniques}, Springer,
  1997.
\newblock 2nd Edition.

\bibitem{destexhe-mainen-etal:94}
{\sc A.~Destexhe, Z.~Mainen, and T.~Sejnowski}, {\em An efficient method for
  computing synaptic conductances based on a kinetic model of receptor
  binding}, Neural Computation, 6 (1994), pp.~14---18.

\bibitem{donsker-varadhan:85}
{\sc M.~Donsker and S.~Varadhan}, {\em Large deviations for stationary
  {G}aussian processes}, Commun. Math. Phys., 97 (1985), pp.~187--210.

\bibitem{dugundji1951extension}
{\sc J.~Dugundji}, {\em An extension of tietze's theorem}, Pacific J. Math, 1
  (1951), pp.~353--367.

\bibitem{ellis:85}
{\sc R.~Ellis}, {\em Entropy, large deviations and statistical mechanics},
  Springer, 1985.

\bibitem{ermentrout-terman:10}
{\sc G.~B. Ermentrout and D.~Terman}, {\em Foundations of Mathematical
  Neuroscience}, Interdisciplinary Applied Mathematics, Springer, 2010.

\bibitem{fasoli2015complexity}
{\sc D.~Fasoli, A.~Cattani, and S.~Panzeri}, {\em The complexity of dynamics in
  small neural circuits}, arXiv preprint arXiv:1506.08995,  (2015).

\bibitem{e16126722}
{\sc O.~Faugeras and J.~MacLaurin}, {\em A large deviation principle and an
  expression of the rate function for a discrete stationary gaussian process},
  Entropy, 16 (2014), p.~6722.

\bibitem{faugeras-maclaurin:16}
{\sc O.~Faugeras and J.~MacLaurin}, {\em Large deviations of a
  spatially-stationary network of interacting neurons}, arxiv depot, INRIA
  Sophia Antipolis, http://arxiv.org/abs/1404.0732, 2014.

\bibitem{fischer:12}
{\sc M.~Fischer et~al.}, {\em On the form of the large deviation rate function
  for the empirical measures of weakly interacting systems}, Bernoulli, 20
  (2014), pp.~1765--1801.

\bibitem{fitzhugh:55}
{\sc R.~FitzHugh}, {\em Mathematical models of threshold phenomena in the nerve
  membrane}, Bulletin of Mathematical Biology, 17 (1955), pp.~257--278.

\bibitem{fitzhugh:66}
{\sc R.~Fitzhugh}, {\em {Theoretical Effect of Temperature on Threshold in the
  Hodgkin-Huxley Nerve Model}}, The Journal of General Physiology, 49 (1966),
  pp.~989--1005.

\bibitem{fitzhugh:69}
{\sc R.~FitzHugh}, {\em Mathematical models of excitation and propagation in
  nerve}, McGraw-Hill Companies, 1969, ch.~1.

\bibitem{galves2013infinite}
{\sc A.~Galves and E.~L{\"o}cherbach}, {\em Infinite systems of interacting
  chains with memory of variable length a stochastic model for biological
  neural nets}, Journal of Statistical Physics, 151 (2013), pp.~896--921.

\bibitem{georgii:88}
{\sc H.-O. Georgii}, {\em Gibbs Measures and Phase Transitions}, De Gruyter,
  1988.

\bibitem{gerstner-kistler:02b}
{\sc W.~Gerstner and W.~Kistler}, {\em Spiking Neuron Models}, Cambridge
  University Press, 2002.

\bibitem{goldys:01}
{\sc B.~Goldys and M.~Musiela}, {\em Infinite dimensional diffusions,
  kolmogorov equations and interest rate models}, Option Pricing, Interest
  Rates and Risk Management,  (2001), pp.~314--335.

\bibitem{nagumo-arimoto-etal:62}
{\sc J.~Nagumo, S.~Arimoto, and S.~Yoshizawa}, {\em An active pulse
  transmission line simulating nerve axon}, Proceedings of the IRE, 50 (1962),
  pp.~2061--2070.

\bibitem{robins2007introduction}
{\sc G.~Robins, P.~Pattison, Y.~Kalish, and D.~Lusher}, {\em An introduction to
  exponential random graph (p*) models for social networks}, Social networks,
  29 (2007), pp.~173--191.

\bibitem{shiga-shimizu:80}
{\sc T.~Shiga and A.~Shimizu}, {\em Infinite dimensional stochastic
  differential equations and their applications}, Journal Mathematics Kyoto
  University, 20 (1980), pp.~395--416.

\bibitem{sporns2011networks}
{\sc O.~Sporns}, {\em Networks of the Brain}, MIT press, 2011.

\bibitem{sznitman:91}
{\sc A.-S. Sznitman}, {\em Topics in propagation of chaos}, in Ecole d'Et{\'e}
  de Probabilit{\'e}s de Saint-Flour XIX --- 1989, D.~Burkholder, E.~Pardoux,
  and A.-S. Sznitman, eds., vol.~1464 of Lecture Notes in Mathematics, Springer
  Berlin / Heidelberg, 1991, pp.~165--251.
\newblock 10.1007/BFb0085169.

\end{thebibliography}
\end{document}